\documentclass[12pt]{amsart}
\tolerance=500
\setlength{\emergencystretch}{3em}
\hyphenation{pseu-do-func-tor}
\hyphenation{pseu-do-func-tors}
\usepackage[T1]{fontenc}
\usepackage{lmodern}
\usepackage{ifthen}
\usepackage{amsfonts}
\usepackage{amsxtra}
\usepackage{amssymb}
\usepackage{array}
\usepackage{eucal}
\usepackage[left=1.2in,right=1.2in,bottom=1.3in,top=1.3in]{geometry}
\usepackage{xcolor}
\definecolor{cite}{rgb}{0.50,0.00,1.00}
\definecolor{url}{rgb}{0.00,0.50,0.75}
\definecolor{link}{rgb}{0.00,0.00,0.50}
\usepackage[colorlinks,linkcolor=link,urlcolor=url,citecolor=cite,pagebackref,breaklinks]{hyperref}
\usepackage{colonequals}
\usepackage{mathtools}
\usepackage{mathrsfs}
\usepackage[all]{xy}
\usepackage[lite,abbrev,msc-links,alphabetic]{amsrefs}
\usepackage{enumerate}
\usepackage{enumitem}

\numberwithin{equation}{section}

\theoremstyle{plain}
\newtheorem{proposition}{Proposition}[subsection]

\newtheorem{corollary}[proposition]{Corollary}
\newtheorem{lem}[proposition]{Lemma}
\newtheorem{theorem}[proposition]{Theorem}

\theoremstyle{definition}
\newtheorem{definition}[proposition]{Definition}
\newtheorem{notation}[proposition]{Notation}

\theoremstyle{remark}
\newtheorem{example}[proposition]{Example}
\newtheorem{remark}[proposition]{Remark}


\renewcommand{\b}[1]{\mathbf{#1}}
\renewcommand{\c}[1]{\mathcal{#1}}
\renewcommand{\d}[1]{\mathbb{#1}}
\newcommand{\f}[1]{\mathfrak{#1}}
\renewcommand{\r}[1]{\mathrm{#1}}
\newcommand{\s}[1]{\mathscr{#1}}
\renewcommand{\sf}[1]{\mathsf{#1}}
\renewcommand{\(}{\left(}
\renewcommand{\)}{\right)}
\newcommand{\res}{\mathbin{|}}
\newcommand{\Sec}{\S}


\newcommand{\bD}{\b D}

\newcommand{\bL}{\b L}

\newcommand{\bR}{\b R}

\newcommand{\cA}{\c A}
\newcommand{\cB}{\c B}
\newcommand{\cC}{\c C}
\newcommand{\cD}{\c D}
\newcommand{\cE}{\c E}
\newcommand{\cF}{\c F}
\newcommand{\cG}{\c G}
\newcommand{\cH}{\c H}

\newcommand{\cX}{\c X}
\newcommand{\cY}{\c Y}

\newcommand{\dN}{\d N}

\newcommand{\dS}{\d S}

\newcommand{\dZ}{\d Z}

\newcommand{\fC}{\f C}

\newcommand{\fR}{\f R}

\newcommand{\fm}{\f m}

\newcommand{\rD}{\r D}
\newcommand{\rE}{\r E}

\newcommand{\rH}{\r H}

\newcommand{\rL}{\r L}

\newcommand{\rN}{\r N}

\newcommand{\rR}{\r R}

\newcommand{\ra}{\r a}
\newcommand{\rb}{\r b}
\newcommand{\rc}{\r c}

\newcommand{\rh}{\r h}

\newcommand{\rn}{\r n}

\newcommand{\sK}{\s K}

\newcommand{\sfK}{\sf K}
\newcommand{\sfL}{\sf L}

\newcommand{\sfp}{\sf p}
\newcommand{\sfq}{\sf q}


\newcommand{\all}{\r{all}}
\newcommand{\atimes}{\overset{\ra}\otimes}

\newcommand{\cart}{\r{cart}}
\newcommand{\Cat}{\c{C}\r{at}_\infty}
\newcommand{\cat}{\c{C}\r{at}_1}

\newcommand{\Chp}{\c{C}\r{hp}}
\newcommand{\Chpars}{\Chp^{\r{Ar}}_{\r{lft}/\dS}}

\newcommand{\colim}{\varinjlim}
\newcommand{\cons}{\r{cons}}

\newcommand{\cosk}{\r{cosk}}

\newcommand{\del}{\b{\Delta}}

\newcommand{\et}{\acute{\r{e}}\r{t}}

\newcommand{\Fibr}{\r{Fibr}}
\newcommand{\Fin}{\c{F}\r{in}_*}

\newcommand{\HOM}{\c{H}\r{om}}
\newcommand{\Hom}{\r{Hom}}
\newcommand{\id}{\r{id}}

\renewcommand{\lim}{\varprojlim}

\newcommand{\liset}{{\r{lis}\text{-}\acute{\r{e}}\r{t}}}
\newcommand{\ltor}{\Box\text{-}\tor}
\newcommand{\Mset}{\c{S}\r{et}_{\Delta}^+}

\newcommand{\PR}{\c{P}\r{r}}
\newcommand{\PRing}{\c{PR}\r{ing}}
\newcommand{\PRL}{\PR^{\rL}}

\newcommand{\PS}{\PR_{\r{st}}}
\newcommand{\PSL}{\PS^{\rL}}
\newcommand{\PSR}{\PS^{\rR}}
\newcommand{\PSLM}{\CAlg(\Cat)_{\r{pr,st,cl}}^\rL}
\newcommand{\PSLt}[1][]{\PR^{\r{L}}_{\r{st},\r{t}#1}}
\newcommand{\PSRt}[1][]{\PR^{\r{R}}_{\r{st},\r{t}#1}}

\newcommand{\Rind}{\c{R}\r{ind}}

\newcommand{\Ring}{\c{R}\r{ing}}

\newcommand{\Sch}{\c{S}\r{ch}}

\newcommand{\Schqcs}{\Sch^{\r{qc.sep}}}

\newcommand{\Set}{\c{S}\r{et}}
\newcommand{\St}{\mathnormal{St}}

\newcommand{\tor}{\r{tor}}
\newcommand{\trdeg}{\r{tr.deg}}
\newcommand{\Un}{\r{Un}}
\newcommand{\univ}{\r{univ}}
\newcommand{\Vosm}{\r{Vo}^{\r{sm}}}
\newcommand{\Voet}{\r{Vo}^{\et}}

\newcommand{\Chpar}[1]{\Chp^{\ifthenelse{\equal{#1}{}}{}{#1\text{-}}\r{Ar}}}
\newcommand{\Chpdm}[1]{\Chp^{\ifthenelse{\equal{#1}{}}{}{#1\text{-}}\r{DM}}}
\newcommand{\Chplft}[1]{\Chp^{\r{Ar}}_{\r{lft}/{#1}}}
\newcommand{\Chplmb}[1]{\Chp^{\r{LMB}}_{\r{lft}/{#1}}}
\newcommand{\EO}[4]{\prescript{#1}{#2}{\r{EO}}^{#3}_{#4}}
\newcommand{\Sset}[1][]{\Set_{#1\Delta}}
\newcommand{\TS}[3]{\prescript{#1}{}{#2}^{#3}}

\DeclareMathOperator{\CAlg}{CAlg}

\DeclareMathOperator{\codim}{codim}
\DeclareMathOperator{\Fun}{Fun}
\DeclareMathOperator{\Map}{Map}
\DeclareMathOperator{\Mod}{Mod}
\DeclareMathOperator{\Ob}{Ob}
\DeclareMathOperator{\Spec}{Spec}
\DeclareMathOperator{\Tr}{Tr}


\begin{document}

\title[Enhanced adic formalism for Artin stacks]
{Enhanced adic formalism and perverse t-structures for higher Artin stacks}

\author{Yifeng Liu}
\address{Department of Mathematics, Northwestern University, Evanston, IL 60208, United States}
\email{liuyf@math.northwestern.edu}

\author{Weizhe Zheng}
\address{Morningside Center of Mathematics, Academy of Mathematics and Systems Science, Chinese Academy of Sciences, Beijing 100190, China; University of the Chinese Academy of Sciences, Beijing
100049, China}
\email{wzheng@math.ac.cn}

\date{September 26, 2017}
\subjclass[2010]{14F05 (primary), 14A20, 14F20, 18D05, 18G30 (secondary)}

\begin{abstract}
  In this sequel of \cites{LZ0,LZ1}, we develop an adic formalism for \'etale cohomology of Artin stacks and prove several desired properties including the base change theorem. In addition, we define perverse t-structures on Artin stacks for general perversity, extending Gabber's work on schemes. Our results generalize results of Laszlo and Olsson on adic formalism and middle perversity. We continue to work in the world of $\infty$-categories in the sense of Lurie, by enhancing all the derived categories, functors, and natural transformations to the level of $\infty$-categories.
\end{abstract}

\maketitle

\tableofcontents

\section*{Introduction}
\label{0}

In \cites{LZ0,LZ1}, we developed a theory of Grothendieck's six operations for \'etale cohomology of Artin stacks and prove several desired properties including the base change theorem. In the article, we develop the corresponding adic formalism and establish adic analogues of results in \cite{LZ1}. This extends all previous theories on the subject, including SGA~5 \cite{SGA5}, Deligne \cite{WeilII}, Ekedahl \cite{Ekedahl} (for schemes), Behrend \cite{Behrend} and Laszlo--Olsson \cite{LO2}. We prove, among other things, the base change theorem in derived categories, which was previous known only on the level of sheaves \cite{LO2} (and under other restrictions). Another limitation of the existing theories, including those for schemes, is the constructibility assumption. This assumption is not often met, for example, when considering morphisms between Artin stacks that are only locally of finite type. By contrast, the adic formalism developed in this article applies to unrestricted derived categories.

In addition, we define perverse t-structures on Artin stacks for general perversity, extending the work of Gabber \cite{Gabber} for schemes and the work of Laszlo and Olsson \cite{LO3} for the middle perversity.

As in our preceding article, the approach we are taking is different from all the previous theories. We work in the world of $\infty$-categories in the sense of Lurie \cites{Lu1,Lu2}, by enhancing all the derived categories, functors, and natural transforms to the level of $\infty$-categories. At this level, we may use some new machineries among which the most important ones are gluing objects, Adjoint Functor Theorem, $\infty$-categorical descent, all in \cites{Lu1,Lu2}, and some other techniques developed in \cite{LZ1}. In particular, we obtain several other special descent properties for the derived category of lisse-\'{e}tale sheaves.

\subsection{Six operations in adic formalism}
\label{0ss:result}

In this and the next sections, we will state our constructions and results only in the classical setting of Artin stacks on the level of usual derived categories (which are homotopy categories of the derived $\infty$-categories), among other simplification. We will provide the precise references of the complete results in later chapters, for higher Artin stack higher (and higher Deligne--Mumford stacks), stated on the level of stable $\infty$-categories. We refer the reader to \cite{LZ1}*{\Sec 0.1} for our convention on Artin stacks.

Let $\cX$ be an Artin stack and let $\lambda=(\Xi,\Lambda)$ be a ringed diagram, that is, a functor $\Lambda$ from a partially ordered set $\Xi$ to the category of unital commutative rings. Recall that for every $\xi\in\Xi$, $\rD_{\cart}(\cX_{\liset},\Lambda(\xi))$ is the full subcategory of $\rD(\cX_{\liset},\Lambda(\xi))$ spanned by complexes whose cohomology sheaves are all Cartesian. It has a natural $\infty$-categorical enhancement $\cD(\cX,\Lambda(\xi))$. In fact, we have a functor $\rN(\Xi)^{op}\to\Cat$ from the nerve of $\Xi^{op}$ to the $\infty$-category of $\infty$-categories sending $\xi$ to $\cD(\cX,\Lambda(\xi))$, with the transition functors being (derived) extension of scalars. We define
\[
\cD(\cX,\lambda)_\ra\coloneqq\varprojlim_{\rN(\Xi)^{op}}\cD(\cX,\Lambda(\xi))
\]
and let $\rD(\cX,\lambda)_\ra$ be its homotopy category. It is crucial that the limit is taken on the level of $\infty$-categories.

Let $f\colon\cY\to\cX$ be a morphism of Artin stacks. We then define operations:
\begin{align*}
f^{*\ra}&\colon\rD(\cX,\lambda)_\ra\to\rD(\cY,\lambda),\\
f_{*\ra}&\colon\rD(\cY,\lambda)_\ra\to\rD(\cX,\lambda),\\
-\atimes_\cX-&\colon\rD(\cX,\lambda)_\ra\times\rD(\cX,\lambda)_\ra\to\rD(\cX,\lambda)_\ra,\\
\HOM^\ra_\cX&\colon\rD(\cX,\lambda)_\ra^{op}\times\rD(\cX,\lambda)_\ra\to\rD(\cX,\lambda)_\ra.
\end{align*}
The pairs $(f^{*\ra},f_{*\ra})$ and $(-\atimes_\cX\sfK,\HOM^\ra_\cX(\sfK,-))$ for every $\sfK\in\rD(\cX,\lambda)_\ra$ are pairs of adjoint functors.

To state the other two operations, we fix a nonempty set $\Box$ of rational primes. Recall that a ring is \emph{$\Box$-torsion} \cite{SGA4}*{IX 1.1} if each element of it is killed by an integer that is a product of primes in $\Box$. An Artin stack $\cX$ is {\em $\Box$-coprime} if there exists a morphism $\cX\to \Spec\dZ[\Box^{-1}]$. If $\cX$ and $\cY$ are $\Box$-coprime, $f\colon \cY\to \cX$ is locally of finite type, and $\lambda$ is a $\Box$-torsion ringed diagram, then we have another pair of adjoint functors:
\begin{align*}
f_{!\ra}&\colon\rD(\cY,\lambda)_\ra\to\rD(\cX,\lambda)_\ra,\\
f^{!\ra}&\colon\rD(\cX,\lambda)_\ra\to\rD(\cY,\lambda)_\ra.\\
\end{align*}
The functors $f^{*\ra}$, $f_{!*}$ and $-\atimes_\cX-$ are naturally defined from the limit construction of $\rD(-,\lambda)_\ra$.

In \Sec\ref{1ss:relation}, we show that $\rD(\cX,\lambda)_\ra$ is canonically equivalent to a full subcategory of $\rD(\cX,\lambda)$ spanned by so-called \emph{adic complexes}, which admits a \emph{colocalization functor} $\fR_\cX\colon\rD(\cX,\lambda)\to\rD(\cX,\lambda)_\ra$. Moreover, $f^{*\ra}$, $f_{!*}$ and $-\atimes_\cX-$ are simply restrictions of $f^*$, $f_!$ and $-\otimes_\cX-$, respectively, as they preserve adic complexes. For the other three, we have $f_{*\ra}=\fR_\cX\circ f_*$, $f^{!\ra}=\fR_\cY\circ f^!$ and $\HOM^\ra_\cX=\fR_\cX\circ\HOM_\cX$. These operations satisfy the similar properties as in the non-adic version. Moreover, the exact category $\cD(\cX,\lambda)_\ra$ carries a \emph{usual t-structure} $(\cD^{\leq 0}(\cX,\lambda)_\ra,\cD^{\geq 0}(\cX,\lambda)_\ra)$. We refer the reader to \Sec\ref{1ss:adic_properties} and \Sec\ref{1ss:relation} for more details.

The adic formalism introduced above does \emph{not} assume the constructibility at the first place. In other words, we are free to talk
about adic complexes for any sheaves. In particular, in terms of Grothendieck's fonctions-faisceaux dictionary, we make sense of divergent
integrals on stacks over finite fields, those appear for example in \cite{FN}.

In \Sec\ref{2}, we introduce a special setup of the adic formalism, namely, the $\fm$-adic formalism on which there is a good notion of constructibility. Such formalism is enough for most applications. Let $\Lambda$ be a ring and $\fm\subseteq \Lambda$ be a principal ideal, satisfying the conditions in Definition \ref{2de:pring}. The typical example is that $\Lambda$ is a $1$-dimensional valuation ring and $\fm$ is a proper ideal. The pair $(\Lambda,\fm)$ corresponds to a ringed diagram $\Lambda_{\bullet}$ with the underlying category $\dN=\{0\to1\to2\to\cdots\}$ and $\Lambda_n=\Lambda/\fm^{n+1}$. Now we fix a pair $(\Lambda,\fm)$ as above such that $\Lambda$ is Noetherian and $\Lambda/\fm$ is $\Box$-torsion. Let $\dS$ be either a quasi-excellent finite-dimensional scheme or a regular scheme of dimension $\leq1$, that is $\Box$-coprime. Consider Artin stacks that are locally of finite type over $\dS$. In this setup, we define the intersection
\[
\rD(\cX,\Lambda_\bullet)_{\ra,\rc}\coloneqq\rD_{\cons}(\cX_{\liset},\Lambda_{\bullet})\cap\rD(\cX,\Lambda_{\bullet})_\ra
\subseteq\rD(\cX_{\liset},\Lambda_{\bullet})
\]
of constructible complexes and adic complexes as the category of constructible adic complexes. We assume that $\cX$ is \emph{locally $\Lambda/\fm$-bounded} (Definition \ref{2de:bounded}). Then we show in Corollary \ref{2co:constructible_adic_t} that the usual t-structure on $\rD(\cX,\Lambda_\bullet)_\ra$ restricts to a t-structure on the full subcategory $\rD(\cX,\Lambda_\bullet)_{\ra,\rc}$. Moreover, for a morphism $f\colon\cY\to\cX$ of Artin stacks (that are locally of finite type over $\dS$), the six operations mentioned previously restrict to the following refined ones:
\begin{align*}
f^{*\ra}&\colon\rD(\cX,\Lambda_\bullet)_{\ra,\rc}\to\rD(\cY,\Lambda_\bullet)_{\ra,\rc},\\
-\atimes_\cX-&\colon\rD^{(-)}(\cX,\Lambda_\bullet)_{\ra,\rc}\times\rD^{(-)}(\cX,\Lambda_\bullet)_{\ra,\rc}
\to\rD^{(-)}(\cX,\Lambda_\bullet)_{\ra,\rc},\\
\HOM^\ra_\cX&\colon\rD^{(-)}(\cX,\Lambda_\bullet)_{\ra,\rc}^{op}\times\rD^{(+)}(\cX,\Lambda_\bullet)_{\ra,\rc}
\to\rD^{(+)}(\cX,\Lambda_\bullet)_{\ra,\rc};
\end{align*}
if $\dS$ is locally finite-dimensional, then we have
\[
f^{!\ra}\colon\rD(\cX,\Lambda_\bullet)_{\ra,\rc}\to\rD(\cY,\Lambda_\bullet)_{\ra,\rc};
\]
if $f$ is of finite presentation \cite{LZ1}*{5.4.3}, then we have
\[
f_{!\ra}\colon\rD^{(-)}(\cY,\Lambda_\bullet)_{\ra,\rc}\to\rD^{(-)}(\cX,\Lambda_\bullet)_{\ra,\rc};
\]
if $f$ is quasi-compact and quasi-separated \cite{LZ1}*{5.4.3}, then we have
\[
f_{*\ra}\colon\rD^{(+)}(\cY,\Lambda_\bullet)_{\ra,\rc}\to\rD^{(+)}(\cX,\Lambda_\bullet)_{\ra,\rc}.
\]
See \Sec\ref{2ss:constructible_adic} for more and precise statements under various assumptions. In \Sec\ref{2ss:compatibility}, we show that our theory of constructible adic formalism coincides with Laszlo--Olsson \cite{LO3} under their assumptions.

\subsection{Perverse t-structures and hyperdescent}

In \Sec\ref{3}, we define the perverse t-structure, in both non-adic and adic settings, for general ``perversity'' for (higher) Artin stacks, while in all previous theory only middle perversity is considered \cite{LO3}. We introduce the notion of \emph{perversity smooth evaluation} $\sfp$ on an Artin stack $\cX$ (Definition \ref{3de:perversity_evaluation}) to be an assignment to each atlas $u\colon X\to \cX$ a weak perversity function $\sfp_u$ on $X$ in the sense of Gabber \cite{Gabber}, satisfying certain compatibility condition. In particular, when $\cX$ is a scheme, a perversity smooth evaluation is essentially same as a weak perversity function.

\begin{theorem}[(Adic) perverse t-structure, \Sec\ref{3ss:perverse_t} \& \Sec\ref{3ss:adic_perverse_t}]
Let $\cX$ be a $\Box$-coprime Artin stack equipped with a perversity smooth evaluation $\sfp$ and $\lambda$ an $\Box$-torsion ringed diagram.
\begin{enumerate}
  \item There is a unique up to equivalence t-structure $(\TS{\sfp}{\rD}{\leq0}(\cX,\lambda),\TS{\sfp}{\rD}{\geq0}(\cX,\lambda))$ on $\rD(\cX,\lambda)=\rD_{\cart}(\cX_{\liset},\lambda)$, called the \emph{perverse t-structure}, such that for every atlas $u\colon X\to\cX$, we have $u^*\TS{\sfp}{\rD}{\leq0}(\cX,\lambda)=\TS{\sfp_u}{\rD}{\leq0}(X,\lambda)$ and $u^*\TS{\sfp}{\rD}{\geq0}(\cX,\lambda)=\TS{\sfp_u}{\rD}{\geq0}(X,\lambda)$, where the corresponding t-structure on the scheme $X$ is defined by Gabber \cite{Gabber}.

  \item If $f\colon\cY\to\cX$ is a smooth morphism, then $f^*$ is perverse t-exact with respect to compatible perversity smooth evaluations $\sfp$ on $\cX$ and $\sfq$ on $\cY$.

  \item We have similar results in the adic setting, where $\TS{\sfp}{\rD}{\leq0}(\cX,\lambda)_\ra=\TS{\sfp}{\rD}{\leq0}(\cX,\lambda)\cap\rD(\cX,\lambda)_\ra$.

  \item Moreover, the classical description of the perverse t-structure via cohomology on stalks again holds (Remark \ref{3re:stalk} and Proposition \ref{3pr:adic_perverse_stalk}).
\end{enumerate}
\end{theorem}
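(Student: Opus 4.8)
The plan is to \emph{define} the perverse $t$-structure on $\rD(\cX,\lambda)=\rD_{\cart}(\cX_{\liset},\lambda)$ by smooth descent from Gabber's construction on schemes, so that the characterization in (1) holds by construction and the remaining assertions reduce to compatibility checks. Fix a smooth atlas $u\colon X\to\cX$ and let $X_{\bullet}$ be the \v{C}ech nerve of $u$; refining each term by a scheme atlas --- and, for higher Artin stacks, inducting on the Artin level with schemes as the base case --- we may assume that every $X_{n}$ is a scheme and that every structure map $X_{m}\to X_{n}$ is smooth. Restricting $\tp$ to the atlases $u_{n}\colon X_{n}\to\cX$ gives weak perversity functions $\tp_{u_{n}}$ on $X_{n}$ in the sense of \cite{Gabber}, and the compatibility condition built into Definition \ref{2de:perversity_evaluation} says precisely that for each arrow $[m]\to[n]$ the pullback $\rD(X_{m},\lambda)\to\rD(X_{n},\lambda)$ carries $\TS{\tp_{u_{m}}}{\rD}{\leq0}(X_{m},\lambda)$ into $\TS{\tp_{u_{n}}}{\rD}{\leq0}(X_{n},\lambda)$ and $\TS{\tp_{u_{m}}}{\rD}{\geq0}(X_{m},\lambda)$ into $\TS{\tp_{u_{n}}}{\rD}{\geq0}(X_{n},\lambda)$; equivalently, these transition functors are perverse $t$-exact. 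I would deduce this from Gabber's description of the perverse $t$-structure on a scheme via the cohomology of $*$- and $!$-stalks, together with the identity $i_{y}^{*}\circ g^{*}\simeq i_{g(y)}^{*}$ (and, in the locally-of-finite-type setting, its $!$-analogue involving the relative dimension shift) for a smooth morphism $g$ of schemes: this converts perverse $t$-exactness of $g^{*}$ into the pointwise numerical inequalities relating $\tp_{u_{m}}$ and $\tp_{u_{n}}$, which are exactly the compatibility condition.

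Granting this scheme-level input, I would invoke the $\infty$-categorical descent equivalence $\rD_{\cart}(\cX_{\liset},\lambda)\simeq\varprojlim_{[n]\in\del}\rD_{\cart}((X_{n})_{\liset},\lambda)$ of \cite{LZ} together with the standard fact that a limit of stable $\infty$-categories along $t$-exact functors carries a $t$-structure whose connective (resp.\ coconnective) part is the limit of the connective (resp.\ coconnective) parts and whose truncation functors are computed termwise. This produces a $t$-structure on $\rD(\cX,\lambda)$. Since the descent datum attached to a Cartesian complex is Cartesian, each $M_{n}$ is the pullback of $M_{0}=u^{*}M$ along the smooth projection $X_{n}\to X_{0}$; combined with perverse $t$-exactness of smooth pullback between schemes, this shows that $M$ lies in the connective (resp.\ coconnective) part as soon as $u^{*}M$ does, which together with the trivial reverse implication gives the characterization in (1). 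Uniqueness follows at once, since a $t$-structure is determined by its connective part and the latter is pinned down by the condition on $u^{*}$.

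For (2), given a smooth $f\colon\cY\to\cX$ with compatible $\tp$ on $\cX$ and $\tq$ on $\cY$, I would pick a scheme atlas $v\colon Y\to\cY$ through which $f$ factors as $Y\xrightarrow{g}X\xrightarrow{u}\cX$ with $g$ smooth (take $Y$ to be a scheme atlas of $\cY\times_{\cX}X$). Then $v^{*}\circ f^{*}\simeq g^{*}\circ u^{*}$, so by the characterization in (1) perverse $t$-exactness of $f^{*}$ reduces to that of the smooth pullback $g^{*}$ between schemes for the compatible weak perversity functions --- the same input as above. Part (4) is handled identically: a geometric point of $\cX$ factors through the atlas $u$, so the $*$- and $!$-stalk functors on $\cX$ are computed on $X$ and Gabber's stalk description descends verbatim; this is the content of Propositions \ref{2pr:perverse_stalk} and \ref{2pr:adic_perverse_stalk}.

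For the adic statement in (3), recall that the inclusion $\rD(\cX,\lambda)_{\adic}\hookrightarrow\rD_{\cart}(\cX_{\liset},\lambda)$ admits the right adjoint $R_{\cX}$. The remaining point is to show that $\rD(\cX,\lambda)_{\adic}$ is stable under the perverse connective truncation $\TS{\tp}{\tau}{\leq0}$ --- a priori only after composing with $R_{\cX}$, i.e.\ that $R_{\cX}$ is perverse left $t$-exact --- after which $\bigl(\TS{\tp}{\rD}{\leq0}(\cX,\lambda)\cap\rD(\cX,\lambda)_{\adic},\ \TS{\tp}{\rD}{\geq0}(\cX,\lambda)\cap\rD(\cX,\lambda)_{\adic}\bigr)$ is a $t$-structure on $\rD(\cX,\lambda)_{\adic}$ by a formal argument, since a stable subcategory closed under $\TS{\tp}{\tau}{\leq0}$ is automatically closed under the cofiber functor $\TS{\tp}{\tau}{\geq1}$. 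By the descent setup this closure again reduces to the scheme case, where it follows by combining Gabber's construction with the behaviour of adic complexes under the ordinary truncations and under pushforward along immersions established in \Sec \ref{3}; the adic analogues of (2) and (4) are then formal, as $\underline{f}^{*}$ and $R_{\cX}$ are compatible with $f^{*}$ and with the ordinary truncations. The main obstacle throughout is precisely this scheme-level input --- perverse $t$-exactness of smooth pullback for compatible weak perversity functions, and the interaction of the perverse truncations with adic complexes --- once it is in place, and given the Cartesian descent equivalence of \cite{LZ} and the termwise $t$-structure on a limit, everything else is descent bookkeeping.
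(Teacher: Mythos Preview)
Your treatment of parts (1), (2), and (4) is essentially the paper's: induct on the Artin level, use Gabber's $t$-structure on schemes as the base case, feed in perverse $t$-exactness of smooth pullback (which the paper proves via Poincar\'e duality and the stalk computation $i_{\overline y}^!\circ f^!\simeq g^*\circ i_{\overline x}^!\langle d\rangle$ in Lemma~\ref{2le:perverse_smooth_pullback}), and then descend using the limit-of-$t$-exact-functors description of $\cD(\cX,\lambda)$ from \cite{LZ}. Your characterization and uniqueness arguments are fine.

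Part (3) has a genuine gap. You propose to show that $\underline{\cD}(\cX,\lambda)=\cD(\cX,\lambda)_{\adic}$ is \emph{closed} under the ambient perverse truncation $\TS{\tp}{\tau}{\leq 0}$, after which the intersected pair $(\TS{\tp}{\cD}{\leq 0}\cap\underline{\cD},\;\TS{\tp}{\cD}{\geq 0}\cap\underline{\cD})$ would be a $t$-structure. But the adic subcategory is \emph{not} closed under truncation in general: already for the usual $t$-structure in the $\fm$-adic setting, Lemma~\ref{3le:normalize_truncation} shows that $\tau^{\geq n}$ of a normalized complex is only \emph{essentially} normalized, with an essentially-null defect in degree $n-1$; the adic truncation is $\bL\pi^*\bR\pi_*\circ\tau^{\geq n}$ (Proposition~\ref{3pr:adic_t}), not the restriction of $\tau^{\geq n}$. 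The material in \Sec\ref{3} that you invoke is specific to the $\fm$-adic case under an admissibility hypothesis, and even there it does not yield closure under truncation. Relatedly, the statement in the theorem only asserts that the \emph{connective} part is the intersection; the coconnective part $\TS{\tp}{\underline{\cD}}{\geq 0}(\cX,\lambda)$ is defined in \Sec\ref{2ss:adic_perverse_t} as the right orthogonal of $\TS{\tp}{\underline{\cD}}{\leq -1}$ \emph{inside} $\underline{\cD}(\cX,\lambda)$, and is in general strictly larger than $\TS{\tp}{\cD}{\geq 0}\cap\underline{\cD}$.

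The paper's route avoids this entirely. Since the inclusion $\underline{\cD}(\cX,\lambda)\hookrightarrow\cD(\cX,\lambda)$ lies in $\PSL$ (Proposition~\ref{1pr:adic_inclusion}), the intersection $\TS{\tp}{\underline{\cD}}{\leq 0}=\TS{\tp}{\cD}{\leq 0}\cap\underline{\cD}$ is presentable (Lemma~\ref{1le:pullback} and \cite{Lu1}*{5.5.3.12}), closed under small colimits and extensions in the presentable stable $\infty$-category $\underline{\cD}(\cX,\lambda)$; then \cite{Lu2}*{1.4.4.11~(1)} produces the $t$-structure abstractly, with $\TS{\tp}{\underline{\cD}}{\geq 0}$ the right orthogonal. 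No closure under ambient truncation is needed. The adic analogue of (2)---that $\underline{u}^*$ is perverse $t$-exact---is then proved in Lemma~\ref{2le:adic_perverse_t_indep} by an adjunction argument using $u_!$ (which preserves adic objects and has the required exactness up to a shift), not by any compatibility of $R_{\cX}$ with truncation. For (4) in the adic setting (Proposition~\ref{2pr:adic_perverse_stalk}), the $\leq 0$ direction is as you say, but the $\geq 0$ direction requires the extra hypothesis that $\tp$ be locally bounded and an argument via $\HOM$, again because one cannot simply restrict the ambient coconnective part.
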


In particular, when $\sfp=0$, we recover the usual t-structure in the non-adic case and obtain the similar usual t-structure in the adic case. When $\sfp$ is the middle perversity evaluation, we generalize the classical notion of middle perverse t-structure for schemes to Artin stacks, in both non-adic and adic cases.

In \Sec\ref{3ss:constructible_adic_perverse_t}, we show that in the $\fm$-adic formalism, under certain conditions on $(\Lambda,\fm)$ and the perversity smooth evaluation $\sfp$, the adic perverse t-structure restricts to the one on $\rD(\cX,\Lambda_{\bullet})_{\ra,\rc}$. In particular, when $\sfp$ is the middle perversity smooth evaluation (that is, the middle perversity function in the case of schemes), the corresponding (adic) perverse t-structure coincides with the one defined by Laszlo--Olsson \cite{LO3}, under their further restrictions on $(\Lambda,\fm)$ and $\cX$.

In \Sec\ref{4}, we prove several hyperdescent properties of derived $\infty$-categories and their adic version we have constructed. In particular, we have the following theorem, which is the incarnation\footnote{To deduce Theorem \ref{0th:descent}, we use the same argument in \cite{LZ1}*{6.2.14}.} on the level of usual derived categories of the main results in this chapter. For simplicity, we only state in the $\fm$-adic formalism.

\begin{theorem}\label{0th:descent}
Let (P) be a property that is either \emph{smooth}, \emph{proper}, or \emph{flat}. Let $f\colon \cY\to\cX$ be a morphism of Artin stacks and $y\colon\cY^+_0\to\cY$ be a (P) surjective morphism. Let $\cY^+_{\bullet}$ be a (P) hypercovering of $\cY$ (Definition \ref{4de:covering}) with the morphism $y_n\colon\cY^+_n\to\cY^+_{-1}\coloneqq\cY$. Put $f_n\coloneqq f\circ y_n\colon\cY^+_n\to\cX$ for $n\geq 0$. Fix a pair $(\Lambda,\fm)$ as in the $\fm$-adic formalism.
\begin{enumerate}
  \item If (P) is proper or flat, then we assume that $\cY$ is locally $\Lambda/\fm$-bounded. For every complex $\sfK\in\rD^{\geq0}(\cY,\Lambda_\bullet)_\ra$, we have a convergent spectral
      sequence
      \[
      \rE_1^{p,q}=\rH^q((f_p)_{*\ra}(y_p)^{*\ra}\sfK)\Rightarrow \rH^{p+q}f_{*\ra}\sfK.
      \]

  \item Suppose that (P) is smooth. If $\cX$ is $\Box$-coprime; $\Lambda/\fm$ is $\Box$-torsion; and $f$ is locally of finite type, then for every complex $\sfK\in\rD^{\leq0}(\cY,\Lambda_\bullet)_\ra$, we have a convergent spectral sequence
      \[
      \tilde{\rE}_1^{p,q}=\rH^q ((f_{-p})_{!\ra}(y_{-p})^{!\ra}\sfK)\Rightarrow \rH^{p+q}f_{!\ra}\sfK.
      \]
\end{enumerate}
\end{theorem}

Finally, we would like to emphasize that all conventions and notation from \cite{LZ1}, especially those in \Sec0.5 there, will be continually adopted in the current article, unless otherwise specified.

\subsubsection*{Acknowledgments}

We thank Ofer~Gabber, Luc~Illusie, Aise~Johan~de~Jong, Jo\"el~Riou,
Shenghao~Sun, and Xinwen~Zhu for useful conversations. Part of this work was
done during visits of the first author to the Morningside Center of
Mathematics, Chinese Academy of Sciences, in Beijing for several times. He
thanks the Center for its hospitality. The first author was partially
supported by NSF grant DMS--1302000. The second author was partially
supported by China's Recruitment Program of Global Experts; National Natural
Science Foundation of China Grants 11321101, 11621061, 11688101; National
Center for Mathematics and Interdisciplinary Sciences and Hua~Loo-Keng Key
Laboratory of Mathematics, Chinese Academy of Sciences.

\section{The adic formalism}
\label{1}

In this chapter, we provide the adic formalism for Grothendieck's six operations. In \Sec\ref{1ss:limit}, we provide our adic formalism by constructing two enhanced operation maps via the limit construction. In \Sec\ref{1ss:adic_properties}, we study several properties of the enhanced operation maps we constructed previously. In \Sec\ref{1ss:relation}, we study the relation between the limit construction and so-called adic complexes. In \Sec\ref{1ss:adic_dualizing}, we construct adic dualizing complexes and study biduality properties.

\subsection{The limit construction}
\label{1ss:limit}

Recall from \cite{LZ1} that for higher Artin stacks, we construct the \emph{first enhanced operation map}
\begin{align*}
\EO{}{\Chpar{}}{\r{I}}{}\colon((\Chpar{})^{op}\times\rN(\Rind)^{op})^\amalg\to\Cat,
\end{align*}
and the \emph{second enhanced operation map}
\begin{align*}
\EO{}{\Chpar{}_\Box}{\r{II}}{}\colon \delta^*_{2,\{2\}}(((\Chpar{}_\Box)^{op}\times\rN(\Rind_{\ltor})^{op})^{\amalg,op})^\cart_{F,\all}\to\Cat.
\end{align*}
Their restrictions to the common domain $((\Chpar{}_\Box)^{op}\times\rN(\Rind_{\ltor})^{op})^\amalg$ are equivalent. In particular, for every object $X$ of $\Chpar{}$ and every object $\lambda=(\Xi,\Lambda)$ of $\Rind$, we obtain a diagram $\Xi^{op}\to\PSL$ given by $\xi\mapsto\cD(X,\Lambda(\xi))$ with the transition map given by extension of scalars.

\begin{definition}
We define the \emph{adic derived $\infty$-category} of $\lambda$-modules on $X$ to be
\[
\cD(X,\lambda)_\ra\coloneqq\varprojlim_{\rN(\Xi)^{op}}\cD(X,\Lambda(\xi)).
\]
\end{definition}

The goal of this section is to make the above definition functorial in a homotopy coherent way. Namely, we will construct the \emph{first enhanced adic operation map}
\begin{align}\label{1eq:premonoidal_artin_adic}
\EO{\ra}{\Chpar{}}{\r{I}}{}\colon((\Chpar{})^{op}\times\rN(\Rind)^{op})^\amalg\to\Cat,
\end{align}
and the \emph{second enhanced adic operation map}
\begin{align}\label{1eq:operation_artin_adic}
\EO{\ra}{\Chpar{}_\Box}{\r{II}}{}\colon \delta^*_{2,\{2\}}(((\Chpar{}_\Box)^{op}\times\rN(\Rind_{\ltor})^{op})^{\amalg,op})^\cart_{F,\all}\to\Cat,
\end{align}
such that their values on $(X,\lambda)$ are both (equivalent to) $\cD(X,\lambda)_\ra$.

By definition, there is a tautological functor $\Rind\to\cat$ sending $(\Xi,\Lambda)$ to $\Xi$. Applying Grothendieck's construction, we obtain an op-fibration $\pi\colon\Rind^\univ\to\Rind$. More precisely, $\Rind^\univ$ is an ordinary category whose objects are pairs $((\Xi,\Lambda),\xi)$ where $(\Xi,\Lambda)$ is an object of $\Rind$ and $\xi$ is an object of $\Xi$, and a morphism from $((\Xi,\Lambda),\xi)$ to $((\Xi',\Lambda'),\xi')$ is a morphism $(\Gamma,\gamma)\colon(\Xi,\Lambda)\to(\Xi',\Lambda')$ of $\Rind$ such that $\Gamma(\xi)$ admits an arrow to $\xi'$. We have another functor $\sigma\colon\Rind^\univ\to\Rind$ sending $((\Xi,\Lambda),\xi)$ to $(\ast,\Lambda(\xi))$. We have two natural inclusion
\begin{align*}
j_0&\colon\rN(\Rind)^{op}\to\rN(\Rind)^{op}\diamond_{\rN(\Rind)^{op}}\rN(\Rind^\univ)^{op},\\
j_1&\colon\rN(\Rind^\univ)^{op}\to\rN(\Rind)^{op}\diamond_{\rN(\Rind)^{op}}\rN(\Rind^\univ)^{op}
\end{align*}
of simplicial sets.

To construct \eqref{1eq:premonoidal_artin_adic}, we start from the map
\[
\EO{\sigma}{\Chpar{}}{\r{I}}{}\colon((\Chpar{})^{op}\times\rN(\Rind^\univ)^{op})^\amalg\to\Cat
\]
as the composition of
\[
(\id_{(\Chpar{})^{op}}\times\rN(\sigma)^{op})^\amalg\colon
((\Chpar{})^{op}\times\rN(\Rind^\univ)^{op})^\amalg\to((\Chpar{})^{op}\times\rN(\Rind)^{op})^\amalg
\]
and $\EO{}{\Chpar{}}{\r{I}}{}$. Taking the right Kan extension of $\EO{\sigma}{\Chpar{}}{\r{I}}{}$ along the inclusion
\[
((\Chpar{})^{op}\times\rN(\Rind^\univ)^{op})^\amalg\hookrightarrow
((\Chpar{})^{op}\times\rN(\Rind)^{op}\diamond_{\rN(\Rind)^{op}}\rN(\Rind^\univ)^{op})^\amalg
\]
induced by $j_1$, and restricting to $((\Chpar{})^{op}\times\rN(\Rind)^{op})^\amalg$ via $j_0$, we obtain the desired map $\EO{\ra}{\Chpar{}}{\r{I}}{}$ \eqref{1eq:premonoidal_artin_adic}.

The construction of \eqref{1eq:operation_artin_adic} is similar. We have the map
\[
\EO{\sigma}{\Chpar{}_\Box}{\r{II}}{}\colon
\delta^*_{2,\{2\}}(((\Chpar{}_\Box)^{op}\times\rN(\Rind_{\ltor}^\univ)^{op})^{\amalg,op})^\cart_{F,\all}\to\Cat,
\]
where $\Rind_{\ltor}^\univ=\Rind^\univ\times_{\Rind}\Rind_{\ltor}$ in which the first functor in the fiber product is $\pi$. Taking the right Kan extension of $\EO{\sigma}{\Chpar{}_\Box}{\r{II}}{}$ along the inclusion
\begin{multline*}
\delta^*_{2,\{2\}}(((\Chpar{}_\Box)^{op}\times\rN(\Rind_{\ltor}^\univ)^{op})^{\amalg,op})^\cart_{F,\all}\\
\hookrightarrow\delta^*_{2,\{2\}}(((\Chpar{}_\Box)^{op}\times
\rN(\Rind_{\ltor})^{op}\diamond_{\rN(\Rind_{\ltor})^{op}}\rN(\Rind_{\ltor}^\univ)^{op})^{\amalg,op})^\cart_{F,\all}
\end{multline*}
induced by $j_1$, and restricting to $\delta^*_{2,\{2\}}(((\Chpar{}_\Box)^{op}\times\rN(\Rind_{\ltor})^{op})^{\amalg,op})^\cart_{F,\all}$ via $j_0$, we obtain the desired map $\EO{\ra}{\Chpar{}_\Box}{\r{II}}{}$ \eqref{1eq:operation_artin_adic}.

By the similar process, we obtain enhanced adic operation maps for higher Deligne--Mumford stacks:
\begin{align*}
\EO{\ra}{\Chpdm{}}{\r{I}}{}\colon((\Chpdm{})^{op}\times\rN(\Rind)^{op})^\amalg\to\Cat,
\end{align*}
and a map
\begin{align*}
\EO{\ra}{\Chpdm{}}{\r{II}}{}\colon \delta^*_{2,\{2\}}(((\Chpdm{})^{op}\times\rN(\Rind_\tor)^{op})^{\amalg,op})^\cart_{F,\all}\to\Cat,
\end{align*}
satisfying the obvious compatibility properties with higher Artin stacks.

\subsection{Properties of enhanced adic operations}
\label{1ss:adic_properties}

In this section, we study properties of the two enhanced adic operation maps constructed previously, in a way parallel to the non-adic ones in \cite{LZ1}.

To simplify notation, we will only discuss properties for higher Artin stacks, that is, the two maps \eqref{1eq:premonoidal_artin_adic} and \eqref{1eq:operation_artin_adic}. We will leave the analogous discussion for higher DM stacks to readers.

\begin{proposition}\label{1pr:monoidal}
We have
\begin{description}
  \item[(P0)] (Monoidal symmetry) The functor $\EO{\ra}{\Chpar{}}{\r{I}}{}$ is a weak Cartesian structure \cite{LZ1}*{1.5.6}, and the induced functor $\EO{\ra}{\Chpar{}}\otimes{}\coloneqq(\EO{\ra}{\Chpar{}}{\r{I}}{})^\otimes$ factorizes through $\PSLM$.

  \item[(P1)] (Disjointness) The map $\EO{\ra}{\Chpar{}}\otimes{}$ sends small coproducts to products.

  \item[(P2)] (Compatibility) The restrictions of $\EO{\ra}{\Chpar{}}{\r{I}}{}$ and $\EO{\ra}{\Chpar{}_\Box}{\r{II}}{}$ to the common domain $((\Chpar{}_\Box)^{op}\times\rN(\Rind_{\ltor})^{op})^\amalg$ are equivalent functors.
\end{description}
\end{proposition}

\begin{proof}
By construction, the value of $\EO{\ra}{\Chpar{}}{\r{I}}{}$ on an object $((X_1,\lambda_1),\dots,(X_n,\lambda_n))$ in the target is an $\infty$-category equivalent to
\[
\prod_{i=1}^n\cD(X_i,\lambda_i)_\ra=\prod_{i=1}^n\varprojlim_{\Xi_i^{op}}\cD(X_i,\Lambda_i(\xi))
\]
if $\lambda_i=(\Xi_i,\Lambda_i)$. We also note that the inclusion functor $\PSLM\to\CAlg(\Cat)$ preserves small limits. Therefore, (P0) and (P1) follow immediately. (P2) is clear from the construction.
\end{proof}

Before discussing the other properties, we introduce more notation. Similar to the non-adic case, we have the map
\begin{align}\label{1eq:operation1}
\EO{\ra}{\Chpar{}_\Box}{*}{!}\colon \delta^*_{2,\{2\}}\rN(\Chpar{}_\Box)^\cart_{F,\all}\times\rN(\Rind_{\ltor})^{op}\to\PSL
\end{align}
induced from \eqref{1eq:operation_artin_adic}.

Evaluating \eqref{1eq:premonoidal_artin_adic} at the object $\langle 1\rangle\in\Fin$, we obtain the map
\begin{align}\label{1eq:upperstar_adic}
\EO{\ra}{\Chpar{}}{*}{}\colon \rN(\Chpar{})^{op}\times\rN(\Rind)^{op}\to\PSL.
\end{align}
Note that this is equivalent to the map by restricting \eqref{1eq:operation1} to the second direction, on $\rN(\Chpar{})^{op}\times\rN(\Rind_{\ltor})^{op}$. Taking right adjoints, we obtain the map
\begin{align}\label{1eq:lowerstar_adic}
\EO{\ra}{\Chpar{}}{}{*}\colon \rN(\Chpar{})\times\rN(\Rind)\to\PSR.
\end{align}
Restricting \eqref{1eq:operation1} to the first direction, we obtain the map
\begin{align}\label{1eq:lowersh_adic}
\EO{\ra}{\Chpar{}_\Box}{}{!}\colon \rN(\Chpar{}_\Box)_F\times\rN(\Rind_{\ltor})^{op}\to\PSL.
\end{align}
Again by taking right adjoints, we obtain the map
\begin{align}\label{1eq:uppersh_adic}
\EO{\ra}{\Chpar{}_\Box}{!}{}\colon \rN(\Chpar{}_\Box)^{op}_F\times\rN(\Rind_{\ltor})\to\PSR.
\end{align}

More concretely, we have the following enhance adic operations:
\begin{description}
  \item[1L] $f^{*\ra}\colon\cD(X,\lambda)_\ra\to\cD(Y,\lambda)_\ra$, obtained by applying \eqref{1eq:upperstar_adic} to a morphism $f\colon Y\to X$ in $\Chpar{}$ and an object $\lambda=(\Xi,\lambda)\in\Rind$. It coincides with the limit of functors $f_\xi^*\colon\cD(X,\Lambda(\xi))\to\cD(Y,\Lambda(\xi))$ over $\Xi^{op}$, and underlies a monoidal functor $f^{*\otimes\ra}\colon\cD(X,\lambda)^\otimes_\ra\to\cD(Y,\lambda)^\otimes_\ra$ obtained from $\EO{\ra}{\Chpar{}}\otimes{}$.

  \item[1R] $f_{*\ra}\colon\cD(Y,\lambda)_\ra\to\cD(X,\lambda)_\ra$, obtained by applying \eqref{1eq:lowerstar_adic} to a morphism $f\colon Y\to X$ in $\Chpar{}$ and an object $\lambda\in\Rind$. It is right adjoint to $f^{*\ra}$.

  \item[2L] $f_{!\ra}\colon\cD(Y,\lambda)_\ra\to\cD(X,\lambda)_\ra$, obtained by applying \eqref{1eq:lowersh_adic} to a morphism $f\colon Y\to X$ in $\Chpar{}_\Box$ and an object $\lambda=(\Xi,\Lambda)\in\Rind_{\ltor}$. It coincides with the limit of functors $f_{\xi!}\colon\cD(Y,\Lambda(\xi))\to\cD(X,\Lambda(\xi))$ over $\Xi^{op}$.

  \item[2R] $f^{!\ra}\colon\cD(X,\lambda)_\ra\to\cD(Y,\lambda)_\ra$, obtained by applying \eqref{1eq:uppersh_adic} to a morphism $f\colon Y\to X$ in $\Chpar{}_\Box$ and an object $\lambda\in\Rind_{\ltor}$. It is right adjoint to $f_{!\ra}$.

  \item[3L] $-\atimes_X-\colon\cD(X,\lambda)_\ra\times\cD(X,\lambda)_\ra\to\cD(X,\lambda)_\ra$, the symmetric tensor product obtained from Proposition \ref{1pr:monoidal} (P0) for every object $(X,\lambda)$ of $\Chpar{}\times\rN(\Rind)$.

  \item[3R] $\HOM^\ra_X\colon\cD(X,\lambda)_\ra^{op}\times\cD(X,\lambda)_\ra\to\cD(X,\lambda)_\ra$, induced from $-\atimes_X-$ in a same way as $\HOM_X$ was induced from $-\otimes_X-$ in \cite{LZ1}*{\Sec6.2}. In particular, for every object $\sfK\in\cD(X,\lambda)_\ra$, we have a pair of adjoint functors $(-\atimes_X\sfK,\HOM^\ra_X(\sfK,-))$.

  \item[4L] $\pi^{*\ra}\colon\cD(X,\lambda)_\ra\to\cD(X,\lambda')_\ra$, obtained by applying \eqref{1eq:upperstar_adic} to an object $X\in\Chpar{}$ and a morphism $\pi\colon\lambda'\to\lambda$ of $\Rind$. It is symmetric monoidal.

  \item[4R] $\pi_{*\ra}\colon\cD(X,\lambda')_\ra\to\cD(X,\lambda)_\ra$, which is a right adjoint of $\pi^*$.
\end{description}

\begin{proposition}\label{1pr:p34}
Let $f\colon Y\to X$ be a morphism of $\Chpar{}$ and $\lambda$ an object of $\Rind$.
\begin{description}
  \item[(P3)] (Conservativeness) If $f$ is surjective, then $f^{*\ra}\colon\cD(X,\lambda)_\ra\to\cD(Y,\lambda)_\ra$ is conservative.

  \item[(P4)] (Descent) Suppose that $f$ is smooth surjective. Then $(f,\id_\lambda)$ is of universal $\EO{\ra}{\Chpar{}}\otimes{}$-descent. If $X$ belongs to $\Chpar{}_\Box$ and and $\lambda$ belongs to $\Rind_{\ltor}$, then $(f,\id_\lambda)$ is of universal $\EO{\ra}{\Chpar{}}{}{!}$-codescent. See \cite{LZ1}*{3.1.1} for the definition of (co)descent.
\end{description}
\end{proposition}

\begin{proof}
(P3) follows from the construction and the fact that
\[
\varprojlim_{\rN(\Xi)^{op}}\cD(X,\Lambda(\xi))\to\varprojlim_{\rN(\Xi)^{op}}\cD(Y,\Lambda(\xi))
\]
is conservative if each functor $\cD(X,\Lambda(\xi))\to\cD(Y,\Lambda(\xi))$ is, where $\lambda=(\Xi,\Lambda)$. The latter is true as $f$ is surjective.

Now we consider (P4). The universal descent property for $\EO{\ra}{\Chpar{}}\otimes{}$ follows from the construction, the same property in the non-adic case, and (the dual version of) \cite{Lu1}*{4.3.2.9}. The universal codescent property for $\EO{\ra}{\Chpar{}}{}{!}$ follows from the construction, the same property in the non-adic case, and \cite{Lu2}*{4.7.5.19}. Note that condition (c) in \cite{Lu2}*{4.7.5.19} is fulfilled by the Poincar\'{e} duality \cite{LZ1}*{6.2.9}.
\end{proof}

\begin{proposition}[\textbf{(P5)} Smooth Base Change]
Let
\[
\xymatrix{
W \ar[d]_-{q} \ar[r]^-{g} & Z \ar[d]^-{p} \\
Y \ar[r]^-{f} & X   }
\]
be a Cartesian diagram in $\Chpar{}_\Box$ where $p$ is smooth. Then for every object $\lambda$ of $\Rind_{\ltor}$, the following square
\[
\xymatrix{
\cD(W,\lambda)_\ra   & \cD(Z,\lambda)_\ra \ar[l]_-{g^{*\ra}}  \\
\cD(Y,\lambda)_\ra \ar[u]^-{q^{*\ra}} & \cD(X,\lambda)_\ra \ar[l]_-{f^{*\ra}}\ar[u]_-{p^{*\ra}}   }
\]
is right adjointable.
\end{proposition}

\begin{proof}
It follows from the construction, the same property in the non-adic case, and \cite{LZ1}*{4.3.7}.
\end{proof}

Now we consider the usual t-structure on $\cD(X,\lambda)$ for an object $(X,\lambda)\in\Chpar{}\times\rN(\Rind)$. Recall from \cite{Lu2}*{1.4.4.12} that, for a presentable stable $\infty$-category $\cD$, a t-structure\footnote{We use a \emph{cohomological} indexing convention, which is different from \cite{Lu2}*{1.2.1.4}.} is \emph{accessible} if the full subcategory $\cD^{\leq0}$ is presentable. For a scheme $X\in\Schqcs$, the usual t-structure on $\cD(X,\lambda)$ is accessible by \cite{Lu2}*{1.3.5.21}. For a higher Artin stack $X$, the usual t-structure on $\cD(X,\lambda)$ is accessible by construction \cite{LZ1}*{4.3.7} (Part (3) of (P6)).

Suppose $\lambda=(\Xi,\Lambda)$. For $n\in\dZ$, we let $\cD^{\leq n}(X,\lambda)_\ra$ be the full subcategory of $\cD(X,\lambda)_\ra$ spanned by objects $\sfK=(\sfK_\xi)_{\xi\in\Xi}$ with $\sfK_\xi\in\cD^{\leq n}(X,\Lambda(\xi))$. Put
\[
\cD^{\geq n}(X,\lambda)_\ra\coloneqq\cD^{\leq n-1}(X,\lambda)^{\perp}_\ra
\]
as a full subcategory of $\cD(X,\lambda)_\ra$. By \cite{LZ1}*{3.1.4}, we have an equivalence
\[
\cD^{\leq n}(X,\lambda)_\ra\simeq\varprojlim_{\rN(\Xi)^{op}}\cD^{\leq n}(Y,\Lambda(\xi)).
\]
Here, we have used the fact that transition functors, which are (derived) extension of scalars, are left exact. In particular, $\cD^{\leq n}(X,\lambda)_\ra$ is presentable; the inclusion $\cD^{\leq n}(X,\lambda)_\ra\subseteq\cD(X,\lambda)$ preserves all small colimits; and $\cD^{\leq n}(X,\lambda)_\ra$ is closed under extension. By \cite{Lu2}*{1.4.4.11 (1)}, the pair $(\cD^{\leq n}(X,\lambda)_\ra,\cD^{\geq n}(X,\lambda)_\ra)$ define an accessible t-structure, called the \emph{usual t-structure}, on $\cD(X,\lambda)_\ra$. We have truncation functors
\[
\tau_\ra^{\leq n}\colon\cD(X,\lambda)_\ra\to\cD^{\leq n}(X,\lambda)_\ra,\quad
\tau_\ra^{\geq n}\colon\cD(X,\lambda)_\ra\to\cD^{\geq n}(X,\lambda)_\ra
\]
for every $n\in\dZ$. Properties (P6) and (P7) will be studied in the next section, after we reveal a relation between $\cD(X,\lambda)_\ra$ and $\cD(X,\lambda)$.

\subsection{Relation with adic complexes}
\label{1ss:relation}

In this section, we define a natural full subcategory $\cD(X,\lambda)'_\ra$ of $\cD(X,\lambda)$ consisting of \emph{adic complexes} and show that there is a canonical equivalence $\cD(X,\lambda)'_\ra\simeq\cD(X,\lambda)_\ra$ of $\infty$-categories.

Let $X$ be an object of $\Chpar{}$, and $\lambda=(\Xi,\Lambda)$ an object of $\Rind$. For every morphism $\varphi\colon \xi\to\xi'$ in $\Xi$, there is a commutative diagram in $\Rind$ of the form
\[
\xymatrix{
(\Xi,\Lambda) \ar@{=}[d] & (\Xi_{/\xi},\Lambda_{/\xi})
\ar[d]^-{i_{\varphi}} \ar[l]_-{i_{\xi}} \ar[r]^-{p_{\xi}}
& (\{\xi\},\Lambda(\xi)) \ar[d]^-{\tilde{\varphi}} \\
(\Xi,\Lambda) & (\Xi_{/\xi'},\Lambda_{/\xi'}) \ar[l]_-{i_{\xi'}} \ar[r]^-{p_{\xi'}} & (\{\xi'\},\Lambda(\xi')),   }
\]
which induces the following diagram in $\PRL$:
\begin{align}\label{1eq:adic}
\xymatrix{
\cD(X,\lambda) \ar[r]^-{i_{\xi}^*} & \cD(X,\lambda_{/\xi})
& \cD(X,\Lambda(\xi)) \ar[l]_-{p_{\xi}^*} \\
\cD(X,\lambda) \ar[r]^-{i_{\xi'}^*} \ar@{=}[u] & \cD(X,\lambda_{/\xi'})
\ar[u]_-{i_{\varphi}^*}   & \cD(X,\Lambda(\xi')) \ar[l]_-{p_{\xi'}^*} \ar[u]_-{\tilde{\varphi}^*},  }
\end{align}
where $\lambda_{/\xi}\coloneqq(\Xi_{/\xi},\Lambda_{/\xi})$. Let $p_{\xi*}$ (resp.\ $p_{\xi'*}$) be a right adjoint of $p_{\xi}^*$ (resp.\ $p_{\xi'}^*$) and let $\alpha_{\varphi}\colon\tilde{\varphi}^*p_{\xi'*}\to p_{\xi*}i_{\varphi}^*$ be the natural transformation.

\begin{definition}[Adic complex]\label{1de:adic_object}
We say that an element $\sfK\in\cD(X,\lambda)$ is an \emph{adic complex} if the natural morphism
\[
\alpha_{\varphi}(i_{\xi'}^*\sfK)\colon\tilde{\varphi}^*p_{\xi'*}i_{\xi'}^*\sfK\to p_{\xi*}i_{\varphi}^*i_{\xi'}^*\sfK
\]
is an equivalence for every morphism $\varphi\colon \xi\to\xi'$ in $\Xi$. The target of $\alpha_{\varphi}(i_{\xi'}^*\sfK)$ is equivalent to
$p_{\xi*}i_{\xi}^*\sfK$. It is clear that adic complexes are stable under equivalence. Denote by
\[
\cD(X,\lambda)'_\ra\subseteq\cD(X,\lambda)
\]
the full subcategory spanned by adic complexes.
\end{definition}

\begin{lem}\label{1le:evaluation}
Let $f\colon Y\to X$ be a morphism in $\Chpar{}$. If $\sfK$ is an adic complex in $\cD(X,\lambda)$, then $f^*\sfK$ is also an adic complex in $\cD(Y,\lambda)$. If $f$ is surjective, then the converse holds as well.
\end{lem}

\begin{proof}
The first statement follows if we can show that the following diagram
\begin{align}\label{1eq:evaluation}
\xymatrix{
\cD(X,\lambda_{/\xi}) \ar[d]_-{f^*} & \cD(X,\Lambda(\xi)) \ar[l]_-{p_{\xi}^*}\ar[d]^-{f^*} \\
\cD(Y,\lambda_{/\xi})  & \cD(Y,\Lambda(\xi)) \ar[l]_-{p_{\xi}^*} \\
}
\end{align}
is right adjointable. By the construction of $\EO{}{\Chpar{}}{\r{I}}{}$ and \cite{LZ1}*{4.3.7}, we may assume that $f$ is a morphism in $\Schqcs$. Then the following diagram
\[
\xymatrix{
\Mod(X^{\Xi_{/\xi}}_{\et},\Lambda_{/\xi})  \ar[d]_-{f^*} & \Mod(X_{\et},\Lambda(\xi)) \ar[l]_-{p_{\xi}^*}\ar[d]^-{f^*}  \\
\Mod(Y^{\Xi_{/\xi}}_{\et},\Lambda_{/\xi})  & \Mod(Y_{\et},\Lambda(\xi)) \ar[l]_-{p_{\xi}^*}
}
\]
has a right adjoint, which is
\[
\xymatrix{
\Mod(X^{\Xi_{/\xi}}_{\et},\Lambda_{/\xi})  \ar[r]^-{s_{\xi}^*}\ar[d]_-{f^*} & \Mod(X_{\et},\Lambda(\xi))\ar[d]^-{f^*} \\
\Mod(Y^{\Xi_{/\xi}}_{\et},\Lambda_{/\xi})  \ar[r]^-{s_{\xi}^*} & \Mod(Y_{\et},\Lambda(\xi))
}
\]
where $s_\xi\colon\{\xi\}\to\Xi_{/\xi}$ is the inclusion map. Thus, \eqref{1eq:evaluation} is right adjointable.

The second statement follows from the first one and property (P3) for $\EO{}{\Chpar{}}{\r{I}}{}$.
\end{proof}

In general, if $\lambda=(\Xi,\Lambda)$ is an object of $\Rind$ and $\xi\in\Xi$, then we have successive inclusions
\[
e_{\xi}\colon(\{\xi\},\Lambda(\xi))\xrightarrow{s_{\xi}}(\Xi_{/\xi},\Lambda_{/\xi})\xrightarrow{i_{\xi}}(\Xi,\Lambda)
\]
which induce the \emph{evaluation functor (at $\xi$)}
\[
e_{\xi}^*\colon\cD(X,\lambda)\to\cD(X,\Lambda(\xi))
\]
for a higher Artin stack $X$. By Lemma \ref{1le:evaluation}, $e_{\xi}^*$ and $p_{\xi*}\circ i_{\xi}^*$ are equivalent. For brevity, we sometimes also write $\sfK_\xi$ for $e_{\xi}^*\sfK$ for an object $\sfK\in\cD(X,\lambda)$.

The functor
\[
\prod_{\xi\in\Xi}e_{\xi}^*\colon\cD(X,\lambda)\to\prod_{\xi\in\Xi}\cD(X,\Lambda(\xi))
\]
is conservative. This is obvious when $X$ is in $\Schqcs$. The general case follows, because simplicial limits of conservative functors are
conservative.

\begin{lem}\label{1le:adic}
Suppose that $\Xi$ admits a final object $\xi$. Then the image of the natural map
$p_{\xi}^*\colon\cD(X,\Lambda(\xi))\to\cD(X,\Lambda)$ is contained in $\cD(X,\Lambda)'_\ra$. Moreover, the induced map
$p_{\xi}^*\colon\cD(X,\Lambda(\xi))\to\cD(X,\Lambda)'_\ra$ is an equivalence of $\infty$-categories.
\end{lem}

\begin{proof}
The first assertion follows from Definition \ref{1de:adic_object} and the natural isomorphism between $p_{\xi'*}$ and $s_{\xi'}^*$ as in \eqref{1eq:adic} for an arbitrary object $\xi'$ of $\Xi$.

For the second assertion, we only need to show that for every adic complex $\sfK\in\cD(X,\lambda)'_\ra$, the adjunction map $p_{\xi}^*p_{\xi*}\sfK\to\sfK$ is an equivalence. Since the functor $\prod_{\xi'\in\Xi}e_{\xi'}^*$ is conservative, this is equivalent to showing that the map $\beta\colon e_{\xi'}^*p_{\xi}^*p_{\xi*}\sfK\to e_{\xi'}^*\sfK$ is an equivalence for every object $\xi'\in\Xi$. Let $\varphi$ be the map $\xi'\to\xi$. Since $\sfK$ is an adic complex, the composite
\[
{\tilde\varphi}^*p_{\xi*}\sfK\xrightarrow{\alpha}p_{\xi'*}p_{\xi'}^*{\tilde\varphi}^*p_{\xi*}\sfK\simeq p_{\xi'*}i_\varphi^*p_\xi^*p_{\xi*}\sfK\xrightarrow{\beta}p_{\xi'*}i_\varphi^*\sfK
\]
is an equivalence, where we adopt the notation in \eqref{1eq:adic}. Moreover, we have shown that $\alpha$ is an equivalence as $p_{\xi'*}\simeq s_{\xi}^*$. Therefore, $\beta$ is an equivalence.
\end{proof}

\begin{proposition}\label{1pr:adic_inclusion}
The inclusion $\cD(X,\lambda)'_\ra\to\cD(X,\lambda)$ is a morphism in $\PRL$.
\end{proposition}

\begin{proof}
By definition, the inclusion $\cD(X,\lambda)'_\ra\subseteq\cD(X,\lambda)$ fits into the following diagram
\[
\xymatrix{
\cD(X,\lambda)'_\ra \ar[rr]\ar[d] && \prod_{\xi\in\Xi}\cD(X,\lambda_{/\xi})'_\ra \ar[d] \\
\cD(X,\lambda) \ar[rr]^-{\prod_{\xi\in\Xi}i_{\xi}^*} &&
\prod_{\xi\in\Xi}\cD(X,\lambda_{/\xi}), }
\]
which is a pullback diagram in $\Cat$ by Lemma \ref{1le:pullback} below. By Lemma \ref{1le:adic}, $p_{\xi*}$ commutes with small colimits, $\cD(X,\lambda_{/\xi})'_\ra$ is presentable and the inclusion into $\cD(X,\lambda_{/\xi})$ preserves small colimits. Therefore, the right vertical arrow is a morphism in $\PRL$ as $\Xi$ is small. Moreover, the functor $\prod_{\xi\in\Xi}i_{\xi}^*$ preserves small colimits since each $i_{\xi}^*$ does and $\Xi$ is small. Therefore, the inclusion $\cD(X,\lambda)'_\ra\to\cD(X,\lambda)$ is a morphism in $\PRL$, because the inclusion $\PRL\subseteq\Cat$ preserves small limits.
\end{proof}

\begin{lem}\label{1le:pullback}
Let $\cD$ be a full subcategory of an $\infty$-category $\cC$ and $f\colon\cD\to\cC$ be the inclusion. Then the pullback of $f$ in the category $\Sset$ by any functor $g\colon \cC'\to \cC$ with source in $\Cat$ is a pullback in $\Cat$.
\end{lem}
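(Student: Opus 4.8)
Lemma \ref{1le:pullback} asserts that for a full subcategory inclusion $f\colon\cD\to\cC$ and any functor $g\colon\cC'\to\cC$ with $\cC'\in\CAT$, the pullback $\cD\times_{\cC}\cC'$ computed in $\Sset$ (i.e. as simplicial sets, a strict fiber product of marked/unmarked simplicial sets with the Joyal model structure in mind) is already a pullback in $\CAT$. The point is that forming the strict fiber product of simplicial sets does not in general compute the $\infty$-categorical pullback—one needs one of the two maps to be a (categorical) fibration, or an equivalent replacement. So the content of the lemma is that a full subcategory inclusion is, up to the issue at hand, "good enough" that no fibrant replacement is needed.

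My plan is to reduce to the statement that $f\colon\cD\to\cC$ is an isofibration (equivalently, that $f$ is a categorical fibration between $\infty$-categories in the sense that it has the right lifting property against the inclusion $\{0\}\hra J$ where $J$ is the nerve of the free-living isomorphism, together with inner fibration). First I would recall that the inclusion of a full subcategory $\cD\subseteq\cC$ is automatically an inner fibration: inner horn lifting in $\cC$ produces a filler whose relevant vertices lie in $\cD$ by fullness, so the filler lies in $\cD$. Next, isomorphism lifting: given an object $d\in\cD$ and an equivalence $d\simeq c$ in $\cC$, the object $c$ is equivalent to an object of $\cD$, hence lies in $\cD$ since $\cD$ is a strictly full—here I must be slightly careful: the lemma only says "full," so I would invoke the convention (stated in \cite{LZ}*{\S0.5}, which the paper declares it continues to use) that the relevant subcategories are closed under equivalence, or else note that a full subcategory inclusion is always equivalent to a strictly full one and the pullback is insensitive to this. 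With $\cD\subseteq\cC$ an isofibration (= categorical fibration between $\infty$-categories, using \cite{Lu1}*{2.4.6.5} or the Joyal-model-structure characterization), the strict fiber product $\cC'\times_{\cC}\cD$ in $\Sset$ agrees with the homotopy fiber product, which is the $\CAT$-pullback; this is the standard fact that strict pullback along a fibration is a homotopy pullback, e.g. \cite{Lu1}*{2.4.2} or the general model-category statement.

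More precisely, the key steps in order: (i) observe $f$ is an inner fibration by fullness and inner-horn filling; (ii) observe $f$ has the isomorphism-lifting property, so $f$ is a categorical fibration of $\infty$-categories (in particular $\cD$ is itself an $\infty$-category, being a full simplicial subset of one, closed under the relevant fillers); (iii) invoke that the Joyal model structure on $\Sset$ is right proper, or more directly that pullback of a fibration is a homotopy pullback, so that the ordinary fiber product $\cC'\times_{\cC}\cD$, formed in $\Sset$, already represents the homotopy pullback and hence the limit in $\CAT$; (iv) note $\cC'\in\CAT$ guarantees the third corner is fibrant, so no replacement on that side is needed either. I would phrase (iii) using \cite{Lu1}*{2.4.2.5} (or \cite{Lu2}) which gives precisely that a strict pullback square in $\Sset$ with one leg a categorical fibration is a homotopy pullback square, hence a pullback in $\CAT$.

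The only genuinely delicate point—and the one I'd expect a careful reader to flag—is the isomorphism-lifting step, because it is exactly where the distinction between "full" and "strictly full / replete" matters. If $\cD$ is merely full and not replete, $f$ need not be an isofibration on the nose, only an inner fibration, and then the strict pullback need not compute the $\CAT$-limit. So the proof really does rely on the standing convention that "full subcategory" here means replete (closed under equivalences), which is the convention fixed in \cite{LZ}. I would make this dependence explicit: \emph{under the convention that full subcategories are taken to be closed under equivalence}, $f$ is a categorical fibration, and the rest is the formal statement that pullbacks along categorical fibrations are $\infty$-categorical pullbacks. Everything else—inner fibration, fibrancy of $\cC'$—is routine and can be dispatched in a sentence each.
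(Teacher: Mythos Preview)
Your argument is correct, and it takes a genuinely different route from the paper. You argue directly in the Joyal model structure: a replete full subcategory inclusion is an inner fibration (because any simplex with vertices in $\cD$ already lies in $\cD$) and an isofibration (by repleteness), hence a categorical fibration by Joyal's characterization \cite{Lu1}*{2.4.6.5}; strict pullback along a categorical fibration then computes the homotopy pullback. This is a clean, self-contained model-categorical argument, and you are right to isolate the repleteness hypothesis as the one place where care is needed.

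The paper, by contrast, gives a one-line proof citing \cite{LZ}*{3.1.5}, applied to the pullback of $\id_\cC$ along $g$. That result is a general statement about how full subcategories interact with limits in $\Cat$: roughly, restricting the vertices of a limit diagram to full subcategories produces the full subcategory of the limit spanned by compatible objects. Since the pullback of $\id_\cC$ by $g$ is trivially a $\Cat$-pullback (being $\cC'$ itself), restricting $\cC$ to $\cD$ on one leg yields the desired conclusion. This approach is more abstract and hides the fibration argument inside the cited lemma; your approach unpacks the mechanism explicitly and avoids the external reference. Either is fine; yours has the advantage of being readable without the companion paper in hand.
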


\begin{proof}
This follows immediately from \cite{LZ1}*{3.1.4} applied to the pullback of $\id_\cC$ by $g$.
\end{proof}

Next, we will construct a natural functor
\begin{align}\label{1eq:limit}
\cD(X,\lambda)'_\ra\to\cD(X,\lambda)_\ra=\varprojlim_{\rN(\Xi)^{op}}\cD(X,\Lambda(\xi))
\end{align}
and show that it is an equivalence. It is clear that for every object $\xi\in\Xi$, $i_\xi^*$ sends $\cD(X,\lambda)'_\ra$ to $\cD(X,\lambda_{/\xi})'_\ra$; and for every morphism $\varphi\colon\xi\to\xi'$ in $\Xi$, $i_\varphi^*$ sends $\cD(X,\lambda_{/\xi'})'_\ra$ to $\cD(X,\lambda_{/\xi})'_\ra$. Therefore, the diagram \eqref{1eq:adic} induces a functor
\[
\cD(X,\lambda)'_\ra\to\varprojlim_{\rN(\Xi)^{op}}\cD(X,\lambda_{/\xi})'_\ra.
\]
By Lemma \ref{1le:adic}, the right-hand side is equivalent to
\[
\varprojlim_{\rN(\Xi)^{op}}\cD(X,\Lambda(\xi))=\cD(X,\lambda)_\ra.
\]
Thus, we obtain the desired functor \eqref{1eq:limit}.

\begin{theorem}\label{1th:limit}
For every objects $X$ of $\Chpar{}$ and $\lambda=(\Xi,\Lambda)$ of $\Rind$, the functor
\[
\cD(X,\lambda)'_\ra\to\cD(X,\lambda)_\ra=\varprojlim_{\rN(\Xi)^{op}}\cD(X,\Lambda(\xi))
\]
\eqref{1eq:adic} is an equivalence of $\infty$-categories.
\end{theorem}

We need some preparation before the proof. Let $X$ be an object of $\Schqcs$. For simplicity, we will write $X$ for $X_{\et}$ as well. By definition, $\cD(X,\lambda)'_\ra$ is a full subcategory of $\cD(X,\lambda)=\cD(X^\Xi,\Lambda)=\cD(\Mod(X^\Xi,\Lambda))$. For every object $\xi$ of $\Xi$, we have an \emph{evaluation functor}
\[
e_\xi^*\colon\Mod(X^\Xi,\Lambda)\to\Mod(X,\Lambda(\xi))
\]
at $\xi$ on the level of Abelian categories. It is exact and admits a (right exact) left adjoint functor
\begin{align}\label{1eq:limit1}
e_{\xi!}\colon\Mod(X,\Lambda(\xi))\to\Mod(X^\Xi,\Lambda).
\end{align}
Moreover, we define a truncation functor
\begin{align}\label{1eq:limit2}
t_{\leq\xi}\colon\Mod(X^\Xi,\Lambda)\to\Mod(X^\Xi,\Lambda)
\end{align}
such that for a $\Lambda$-module $F_{\bullet}\in\Mod(X^\Xi,\Lambda)$, we have
\[
(t_{\leq\xi}F_{\bullet})_{\xi'}
=\begin{cases}
F_{\xi'} & \text{if }\xi'\leq\xi,\\
0 & \text{otherwise.}
\end{cases}
\]
It is exact and admits a right adjoint.

\begin{proof}
By Lemma \ref{1le:evaluation}, \cite{LZ1}*{3.1.4}, property (P4) for $\EO{}{\Chpar{}}{\r{I}}{}$ and Proposition \ref{1pr:p34}, we may assume $X\in\Schqcs$.

We first study the functor
\[
\alpha\colon\cD(X,\lambda)'_\ra\to\varprojlim_{\rN(\Xi)^{op}}\cD(X,\Lambda(\xi))
\]
from the point of view of coCartesian fibrations. First, we have a functor $\Delta^1\times\rN(\Xi)\to\Cat$ sending
$\Delta^1\times(\varphi\colon\xi\to\xi')$ to the square
\[
\xymatrix{
\cD(X^{\Xi_{/\xi}},\Lambda_{/\xi}) \ar[r]^-{p_{\xi*}} \ar[d]_-{i_{\varphi*}} & \cD(X,\Lambda(\xi)) \ar[d]^-{\tilde{\varphi}_*} \\
\cD(X^{\Xi_{/\xi'}},\Lambda_{/\xi'}) \ar[r]^-{p_{\xi'*}}  & \cD(X,\Lambda(\xi')). }
\]
This corresponds to a projectively fibrant simplicial functor $\cF\colon\fC[\rN(D)]\to\Mset$, where $D=[1]\times\Xi$. Let $\phi_D\colon\fC[\rN(D)]\to D$ be the canonical equivalence of simplicial categories and put
\[
\cF'=(\Fibr^D \circ \St^+_{\phi_D^{op}}\circ\Un^+_{\rN(D)^{op}})\cF\colon D\to \Mset.
\]
We write $\cF'$ in the form $\cF'\colon[1]\to(\Mset)^\Xi$. Applying the marked unstraightening functor $\Un^+_{\phi}$ for the weak equivalence of simplicial categories $\phi\colon\fC[\rN(\Xi)^{op}]\to\Xi^{op}$, we obtain a morphism $\tilde\alpha\colon F_1\to F_2$ of Cartesian fibrations in the category $(\Mset)_{/\rN(\Xi)^{op}}$. Moreover, by \cite{Lu1}*{5.2.2.5}, both $F_1$ and $F_2$ are \emph{coCartesian} fibrations as well, but $\tilde\alpha$ does \emph{not} send coCartesian edges to coCartesian ones in general. By a similar argument, we have a map
\[
\cD(X^\Xi,\Lambda)\to\Map^{\r{coCart}}_{\rN(\Xi)^{op}}(\rN(\Xi)^{op},F_1)\coloneqq\Map^\flat_{\rN(\Xi)^{op}}((\rN(\Xi)^{op})^\sharp,(F_1,\cE)),
\]
where $\cE$ is the set of coCartesian edges of $F_1$. Composing with the obvious inclusion $\Map^{\r{coCart}}_{\rN(\Xi)^{op}}(\rN(\Xi)^{op},F_1)\subseteq\Map_{\rN(\Xi)^{op}}(\rN(\Xi)^{op},F_1)$ and $\Map_{\rN(\Xi)^{op}}(\rN(\Xi)^{op},\tilde\alpha)$, we obtain a map
\[
\alpha'\colon\cD(X^\Xi,\Lambda)\to\Map_{\rN(\Xi)^{op}}(\rN(\Xi)^{op},F_2).
\]
We have the equivalence
\[
\Map^{\r{coCart}}_{\rN(\Xi)^{op}}(\rN(\Xi)^{op},F_2)\simeq\lim_{\Xi^{op}}\cD(X,\Lambda(\xi))
\]
by \cite{Lu1}*{3.3.3.2}, and the following pullback diagram
\[
\xymatrix{
\cD(X^\Xi,\Lambda)'_\ra \ar[r]^-{\alpha} \ar[d] & \Map^{\r{coCart}}_{\rN(\Xi)^{op}}(\rN(\Xi)^{op},F_2) \ar[d] \\
\cD(X^\Xi,\Lambda) \ar[r]^-{\alpha'} & \Map_{\rN(\Xi)^{op}}(\rN(\Xi)^{op},F_2) }
\]
by the definition of adic complexes, where vertical arrows are inclusions. We also note that $\alpha'$ commutes with small colimits by \cite{Lu1}*{5.1.2.2}. Thus, the goal is to show that $\alpha$ is an equivalence.

To construct an inverse $\beta$ of $\alpha$, we use $\del_{/\Xi}$: the category of simplices of $\Xi$. Then all $n$-cells
of $\rN(\del_{/\Xi})$ are degenerate for $n\geq2$. Define a functor
\[
\beta'\colon\rN(\del_{/\Xi}^{op})\to\Fun(\Map_{\rN(\Xi)^{op}}(\rN(\Xi)^{op},F_2),\cD(X^\Xi,\Lambda))
\]
sending a typical subcategory $\xi\to(\xi\to\xi')\leftarrow\xi'$ of $\del_{/\Xi}$ to
\[
\xymatrix{
\rL e_{\xi!}\circ \epsilon_\xi & t_{\leq\xi}\circ\rL e_{\xi'!}\circ\epsilon_{\xi'} \ar[l]\ar[r]
&  \rL e_{\xi'!}\circ \epsilon_{\xi'}, }
\]
where $\epsilon_\xi\colon\Map_{\rN(\Xi)^{op}}(\rN(\Xi)^{op},F_2)\to\cD(X,\Lambda(\xi))$ is the restriction functor to the fiber at $\xi$. The functor $\Fun(\alpha',\cD(X^\Xi,\Lambda))\circ\beta'$ extends to a functor
$\rN(\del_{/\Xi}^{op})^{\triangleright}\to\Fun(\cD(X^\Xi,\Lambda),\cD(X^\Xi,\Lambda))$ carrying $(\xi\to(\xi\to\xi')\leftarrow\xi')^\triangleleft$ to
\[
\xymatrix{
\rL e_{\xi!}\circ \epsilon_{\xi}\circ \alpha' \ar[rd] & t_{\leq\xi}\circ\rL e_{\xi'!}\circ\epsilon_{\xi'}\circ \alpha'
\ar[l]\ar[r]\ar[d] &  \rL e_{\xi'!}\circ \epsilon_{\xi'}\circ\alpha'\ar[ld] \\ & \r{id} }
\]
which induces a natural transformation
\[
(\colim\beta')\circ\alpha'\simeq\colim(\Fun(\alpha',\cD(X^\Xi,\Lambda))\circ\beta')\to\r{id}.
\]
Now we put
\[
\beta\coloneqq\colim\beta'\res\Map_{\rN(\Xi)^{op}}^{\r{coCart}}(\rN(\Xi)^{op},F_2).
\]
It is easy to check  that $\beta$ takes values in $\cD(X^\Xi,\Lambda)_\ra$.

We show that the induced natural transformation $\beta\circ\alpha\to\r{id}$ is an equivalence. Pick up an object $\sfK$ of $\cD(X^\Xi,\Lambda)'_\ra$. We need to show that the diagram
\[
\beta^{\triangleright}_{\sfK}\colon\rN(\del_{/\Xi}^{op})^{\triangleright}\to\cD(X^{\Xi},\Lambda),
\]
depicted as
\[
\xymatrix{
\rL e_{\xi!}\sfK_\xi \ar[dr] & t_{\leq\xi}\rL e_{\xi'!} \sfK_{\xi'} \ar[l]\ar[r]\ar[d] &
\rL e_{\xi'!} \sfK_{\xi'}\ar[dl] \\ & \sfK }
\]
is a colimit diagram. We only need to check this after applying $e_{\xi_0}^*$ for every $\xi_0\in\Xi$, since $e_{\xi_0}^*$ commutes with colimits. The composite functor $e_{\xi_0}^*\circ\beta^{\triangleright}_{\sfK}$ has value (equivalent to) $\sfK_{\xi_0}$ (resp.\ $0$) on the cone point, vertices $\{\xi\}$ and $(\xi\to\xi')$ of $\del_{/\Xi}$ for $\xi\geq \xi_0$ (resp.\ otherwise), with all morphisms being either identities on $\sfK_{\xi_0}$ or $0$, or the zero morphism $0\to\sfK_{\xi_0}$. It is clear that $e_{\xi_0}^*\circ\beta^{\triangleright}_{\sfK}$ induces an equivalence $\colim(e_{\xi_0}^*\circ\beta^{\triangleright}_{\sfK}\res\rN(\del_{/\Xi}^{op}))\simeq\sfK_{\xi_0}$ in $\cD(X,\Lambda(\xi_0))$.

For the other direction, that is, a natural equivalence $\alpha\circ\beta\to\r{id}$, we note that the functor $\Fun_{\rN(\Xi)^{op}}(\rN(\Xi)^{op},F_2),\alpha')\circ\beta'$ also extends to a functor
\[
\rN(\del_{/\Xi}^{op})^{\triangleright}\to\Fun(\Map_{\rN(\Xi)^{op}}(\rN(\Xi)^{op},F_2),\Map_{\rN(\Xi)^{op}}(\rN(\Xi)^{op},F_2))
\]
carrying $(\xi\to(\xi\to\xi')\leftarrow\xi')^\triangleleft$ to
\[
\xymatrix{
\alpha'\circ\rL e_{\xi!}\circ \epsilon_\xi \ar[rd] & \alpha'\circ t_{\leq\xi}\circ\rL e_{\xi'!}\circ\epsilon_{\xi'}
\ar[l]\ar[r]\ar[d] & \alpha'\circ \rL e_{\xi'!}\circ \epsilon_{\xi'}\ar[ld] \\ & \r{id} }
\]
which induces a natural transformation
\[
\alpha'\circ(\colim\beta')\simeq\colim(\Fun_{\rN(\Xi)^{op}}(\rN(\Xi)^{op},F_2),\alpha')\circ\beta')\to\id,
\]
where the equivalence of two functors is due to the fact that $\alpha'$ commutes with colimits. Restricting to $\Map^{\r{coCart}}_{\rN(\Xi)^{op}}(\rN(\Xi)^{op},F_2)$, one obtains a natural transformation $\alpha\circ\beta\to\id$ which is an equivalence by an argument similar to the previous one. Therefore, $\alpha$ is an equivalence and the proposition follows.
\end{proof}

\begin{remark}\label{1re:limit}
By Theorem \ref{1th:limit}, in what follows, we will identify $\cD(X,\lambda)'_\ra$ with $\cD(X,\lambda)_\ra$. In particular, we will regard $\cD(X,\lambda)_\ra$ as a full subcategory of $\cD(X,\lambda)$.
\begin{enumerate}
  \item By Proposition \ref{1pr:adic_inclusion}, the inclusion functor $\cD(X,\lambda)_\ra\to\cD(X,\lambda)$ admits a right adjoint, which we denote by $\fR_X\colon\cD(X,\lambda)\to\cD(X,\lambda)_\ra$. It is a colocalization functor \cite{Lu1}*{\Sec5.2.7}.

  \item Let $f\colon Y\to X$ be a morphism of $\Chpar{}$. The functor $f^*\colon\cD(X,\lambda)\to\cD(Y,\lambda)$ preserves adic complexes, and the induced functor $f^*\colon\cD(X,\lambda)_\ra\to\cD(Y,\lambda)_\ra$ coincides with $f^{*\ra}$ up to equivalence. The functor $f_{*\ra}$ is equivalent to the composition of the inclusion $\cD(Y,\lambda)_\ra\to\cD(Y,\lambda)$, $f_*\colon\cD(Y,\lambda)\to\cD(X,\lambda)$ and the functor $\fR_X$.

  \item Let $f\colon Y\to X$ be a locally of finite type morphism of $\Chpar{}_\Box$, and suppose $\lambda\in\Rind_{\ltor}$. The functor $f_!\colon\cD(Y,\lambda)\to\cD(X,\lambda)$ preserves adic complexes, and the induced functor $f_!\colon\cD(Y,\lambda)_\ra\to\cD(X,\lambda)_\ra$ coincides with $f_{!\ra}$ up to equivalence. The functor $f^{!\ra}$ is equivalent to the composition of the inclusion $\cD(X,\lambda)_\ra\to\cD(X,\lambda)$, $f^!\colon\cD(X,\lambda)\to\cD(Y,\lambda)$ and the functor $\fR_Y$.

  \item The functor $-\otimes_X-\colon\cD(X,\lambda)\times\cD(X,\lambda)\to\cD(X,\lambda)$ preserves adic complexes, and the induced functor
      $-\otimes_X-\colon\cD(X,\lambda)_\ra\times\cD(X,\lambda)_\ra\to\cD(X,\lambda)_\ra$ coincides with $-\atimes_X-$ up to equivalence. The functor $\HOM^\ra_X$ is equivalent to the composition of the inclusion $\cD(X,\lambda)_\ra^{op}\times\cD(X,\lambda)_\ra\to\cD(X,\lambda)^{op}\times\cD(X,\lambda)$, $\HOM_X$ and $\fR_X$.

  \item Let $\pi\colon\lambda'\to\lambda$ be a morphism of $\Rind$. The functor $\pi^*\colon\cD(X,\lambda)\to\cD(X,\lambda')$ preserves adic complexes, and the induced functor $\pi^*\colon\cD(X,\lambda)_\ra\to\cD(X,\lambda')_\ra$ coincides with $\pi^{*\ra}$ up to equivalence. The functor $\pi_{*\ra}$ is equivalent to the composition of the inclusion $\cD(X,\lambda')_\ra\to\cD(X,\lambda')$, $\pi_*$ and $\fR_X$.

  \item We have $\cD^{\leq n}(X,\lambda)_\ra=\cD^{\leq n}(X,\lambda)\cap\cD(X,\lambda)_\ra$ for every $n\in\dZ$.
\end{enumerate}
\end{remark}

\begin{remark}[P6]\label{1re:p6}
We have the following remarks concerning the above t-structure.
\begin{enumerate}
  \item The usual t-structure on $\cD(X,\lambda)_\ra$ is accessible. Moreover, the intersection $\bigcap_{n}\cD^{\leq-n}(X,\lambda)$ consists of zero objects\footnote{This is call \emph{weakly left complete} in Definition \ref{3de:complete}. Unlike the non-adic case, we do not know in general whether $\cD(X,\lambda)_\ra$ is right complete or even weakly right complete.}.

  \item The constant sheaf $\lambda_X\in\cD(X,\lambda)$ is an adic complex and belongs to the heart
      \[
      \cD^\heartsuit(X,\lambda)_\ra\coloneqq\cD^{\leq 0}(X,\lambda)_\ra\cap\cD^{\geq 0}(X,\lambda)_\ra
      \]
      by Remark \ref{1re:limit} (6).

  \item The functor $-\langle d\rangle\colon\cD(X,\lambda)\to\cD(X,\lambda)$ from \cite{LZ1}*{Input II} restricts to a functor
      \[
      -\langle d\rangle\colon\cD(X,\lambda)_\ra\to\cD(X,\lambda)_\ra
      \]
      for every integer $d$.

  \item The functors $f^{*\ra}$, $-\atimes_X-$, $\pi^{*\ra}$ are all left t-exact (that is, preserve $\cD^{\leq n}$). The functors $f_{*\ra}$, $\HOM^\ra_X$, $\pi_{*\ra}$ are all right t-exact (that is, preserve $\cD^{\geq n}$).

  \item It follows from \cite{LZ1}*{6.2.15} that $f_{!\ra}[2d]$ is left t-exact, hence $f^{!\ra}[-2d]$ is right t-exact.
\end{enumerate}
\end{remark}

\begin{theorem}[\textbf{(P7)} Poincar\'{e} duality]\label{1th:poincare}
Let $f\colon Y\to X$ be a morphism of $\Chpar{}_\Box$ that is flat and locally of finite presentation. Let $\lambda$ be an object of $\Rind_{\ltor}$. Then
\begin{enumerate}
  \item There is a functorial (in the sense of \cite{LZ1}*{4.1.6}) trace map
      \[
      \Tr_f\colon\tau_\ra^{\geq0}f_{!\ra}\lambda_Y\langle d\rangle\to\lambda_X
      \]
      in the heart $\cD^\heartsuit(X,\lambda)_\ra$ for every integer $d\geq\dim^+(f)$.

  \item If $f$ is moreover smooth, the induced natural transformation
      \[
      u_f\colon f_{!\ra}\circ f^{*\ra}\langle\dim f\rangle\to\id_X
      \]
      is a counit transformation, so that the induced map
      \[
      f^{*\ra}\langle\dim f\rangle\to f^{!\ra}\colon \cD(X,\lambda)_\ra\to\cD(Y,\lambda)_\ra
      \]
      is a natural equivalence of functors.
\end{enumerate}
\end{theorem}

\begin{proof}
For (1), we note that $f_{!\ra}\lambda_Y\langle d\rangle=f_!\lambda_Y\langle d\rangle\in\cD^{\leq 0}(X,\lambda)$ by part (1) of (P7) in \cite{LZ1}*{\Sec4.1}. Thus, by definition, $f_{!\ra}\lambda_Y\langle d\rangle\in\cD^{\leq 0}(X,\lambda)_\ra$. Note that we have a trace map $f_!\lambda_Y\to\lambda_X$ in the non-adic case. Applying $\tau^{\geq0}$ (for the adic one), we obtain the desired trace map
\[
\Tr_f\colon\tau_\ra^{\geq0}f_{!\ra}\lambda_Y\langle d\rangle=\tau_\ra^{\geq0}f_!\lambda_Y\langle d\rangle\to\tau_\ra^{\geq0}\lambda_X=\lambda_X
\]
which is a map in $\cD^\heartsuit(X,\lambda)_\ra$. The functoriality is automatic.

For (2), by the Poincar\'{e} duality $f^*\langle\dim f\rangle\simeq f^!$ in the non-adic case, $f^!$ preserves adic complexes hence $f^{!\ra}=f^!\res\cD(X,\lambda)_\ra$. Then it follows from the corresponding argument in the non-adic case.
\end{proof}

The following theorem, which contains some other properties of enhanced adic operations, holds with the same proof in the non-adic case.

\begin{theorem}\label{1th:properties}
We have
\begin{enumerate}
  \item The \emph{K\"{u}nneth Formula} \cite{LZ1}*{6.2.1} holds in the adic case.

  \item The \emph{Base Change} \cite{LZ1}*{6.2.2} holds in the adic case.

  \item The \emph{Projection Formula} \cite{LZ1}*{6.2.3} holds in the adic case.

  \item \cite{LZ1}*{6.2.4, 6.2.5, 6.2.8, 6.2.11, 6.2.13, 6.2.14, 6.2.15} hold in the adic case.
\end{enumerate}
\end{theorem}

\subsection{Adic dualizing complexes}
\label{1ss:adic_dualizing}

In this section, we construct adic dualizing complexes and study the biduality properties in the adic case.

Let $X$ be an object of $\Chpar{}$, and $\lambda=(\Xi,\Lambda)$ an object of $\Rind$. Let $\Omega$ be an object of $\cD(X,\lambda)$ (resp.\
$\cD(X,\lambda)_\ra$). By adjunction of the pair of functors $-\otimes\sfK\coloneqq-\otimes_X\sfK$ and $\HOM(\sfK,-)\coloneqq\HOM_X(\sfK,-)$ (resp.\ $-\atimes\sfK\coloneqq-\atimes_X\sfK$ and $\HOM^\ra(\sfK,-)\coloneqq\HOM^\ra_X(\sfK,-)$), we have a natural transformation
\begin{align}\label{1eq:biduality1}
\delta_\Omega&\colon\id\to\rh\HOM(\rh\HOM(-,\Omega),\Omega)\\\label{1eq:biduality2}
\text{resp.\ }\delta^\ra_\Omega&\colon\id\to\rh\HOM^\ra(\rh\HOM^\ra(-,\Omega),\Omega)
\end{align}
between endofunctors of $\rh\cD(X,\lambda)$ (resp.\ $\rh\cD(X,\lambda)_\ra$), which is called the \emph{biduality transformation}\footnote{In fact, $\delta_\Omega$ can be enhanced to a natural transformation $\tilde\delta_\Omega\colon\id\to\HOM(\HOM(-,\Omega),\Omega)$ between endofunctors of $\cD(X,\lambda)$, that is, $\rh\tilde\delta_\Omega=\delta_{\Omega}$; and similar for the adic case. We omit the details here since we do not need such enhancement in what follows.}.

In the remaining of this section, we fix a $\Box$-coprime base scheme $\dS$ that is a disjoint union of \emph{excellent} schemes\footnote{A scheme is \emph{excellent} if it is quasi-compact and admits a Zariski open cover by spectra of excellent rings \cite{EGAIV}*{7.8.2}.}, \emph{endowed with a global dimension function}. Let $\Rind_{\Box\text{-}\r{dual}}$ be the full subcategory of $\Rind_{\ltor}$ spanned by ringed diagrams $\Lambda\colon\Xi^{op}\to\Ring$ such that $\Lambda(\xi)$ is a ($\Box$-torsion) Gorenstein ring of dimension 0 for every object $\xi$ of $\Xi$.

\begin{definition}[Potential dualizing complex]\label{1de:dualizing_complex}
Let $\lambda=(\Xi,\Lambda)$ be an object of $\Rind_{\Box\text{-}\r{dual}}$. For an object $f\colon X\to\dS$ of $\Chpars$ with $X$ in $\Schqcs$, we say that an object $\Omega\in\cD(X,\lambda)$ is a \emph{pinned/potential dualizing complex} (on $X$) if
\begin{enumerate}
  \item $\Omega$ is an adic complex, and

  \item for every object $\xi$ of $\Xi$, $\Omega_\xi=e_\xi^*\Omega\in\cD(X,\Lambda(\xi))$ is a pinned/potential dualizing complex.
\end{enumerate}
For a general object $f\colon X\to\dS$ of $\Chpars$, we say that an object $\Omega\in\cD(X,\lambda)$ is a \emph{pinned/potential dualizing complex} if for every atlas $u\colon X_0\to X$ with $X_0$ in $\Schqcs$, $u^!\Omega$ is a pinned/potential dualizing complex on $X_0$.
\end{definition}

\begin{proposition}\label{1pr:dualizing_complex}
Let $f\colon X\to\dS$ be an object of $\Chpars$ and $\lambda$ an object of $\Rind_{\Box\text{-}\r{dual}}$. The full subcategory of $\cD(X,\lambda)$ spanned by all pinned/potential dualizing complexes is equivalent to the nerve of an ordinary category
consisting of only one object $\Omega$ with
\[
\Hom(\Omega,\Omega)=\(\lim_{\xi\in\Xi}\Lambda(\xi)\)^{\pi_0(X)}.
\]
Moreover, pinned/potential dualizing complexes are constructible and compatible under extension of scalars.
\end{proposition}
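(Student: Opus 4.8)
The plan is to reduce everything to the case of a quasi-compact separated scheme $X$ and then bootstrap to general $X$ in $\Chpars$ via the atlas definition and descent. First I would treat the scheme case. Here the key input is the analogous statement for each individual $\Lambda(\xi)$: by Gabber's theory of dualizing complexes over excellent schemes endowed with a dimension function (which is how $\sf S$ is set up), for each $\xi$ the full subcategory of $\cD(X,\Lambda(\xi))$ spanned by potential dualizing complexes is equivalent to the nerve of a one-object category with endomorphism ring $\Lambda(\xi)^{\pi_0(X)}$, and similarly for pinned dualizing complexes; moreover such complexes are constructible. Using the limit description $\cD(X,\lambda)_{\adic}\simeq \underline{\cD}(X,\lambda)=\lim_{\Xi^{op}}\cD(X,\Lambda(\xi))^{\otimes}$ from Proposition~\ref{1pr:ptopos_adic} (applied to $T=X_{\et}$) and Corollary~\ref{1co:enhanced_adic_limit}, an adic complex $\sO$ with all $\sO_\xi$ potential dualizing is the same datum as a compatible system of potential dualizing complexes $(\sO_\xi)_\xi$ together with the transition equivalences $\sO_\xi\otimes_{\Lambda(\xi')}\Lambda(\xi)\simeq\sO_\xi$ for $\xi\to\xi'$ (this last equivalence being automatic from the definition of potential dualizing complex over the two coefficient rings and the transition functoriality of Gabber's construction). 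Computing $\Hom$ in this limit category: a morphism $\sO\to\sO$ is a compatible family of morphisms $\sO_\xi\to\sO_\xi$, i.e. an element of $\lim_\xi \Lambda(\xi)^{\pi_0(X)}=(\lim_\xi\Lambda(\xi))^{\pi_0(X)}$, and existence (up to the usual $\pi_0$-indexing issue) of such an adic $\sO$ follows because Gabber's potential dualizing complexes can be chosen functorially in the coefficient ring, assembling into an object of the limit. The biduality/higher-coherence is controlled by the fact that each $\cD(X,\Lambda(\xi))$'s subcategory is $0$-truncated (discrete), and a limit of discrete groupoids-with-one-object over a poset $\Xi^{op}$ is again discrete.

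Next I would handle a general object $f\colon X\to\sf S$ of $\Chpars$. By definition an adic $\sO\in\cD(X,\lambda)$ is potential dualizing iff $u^!\sO$ is so on $X_0$ for every atlas $u\colon X_0\to X$ with $X_0\in\Schqcs$; since $\underline f^!$ preserves adic objects and is compatible with $e_\xi^*$ (the pointwise version $(u^!\sO)_\xi\simeq u^!(\sO_\xi)$ holds by the analogous non-adic statement in \cite{LZ}), the scheme-case result applies on $X_0$. To compute $\Hom_{\cD(X,\lambda)}(\sO,\sO)$ I would pass to a smooth hypercovering $X_\bullet^+\to X$ by objects of $\Schqcs$ and use smooth descent for $\cD(-,\lambda)_{\adic}$ (Corollary~\ref{1co:enhanced_adic_limit} gives this, since the adic categories satisfy the same descent as the non-adic ones): $\Hom_{\cD(X,\lambda)}(\sO,\sO)$ is the totalization of the cosimplicial diagram $n\mapsto\Hom_{\cD(X_n^+,\lambda)}(\sO|_{X_n^+},\sO|_{X_n^+})$. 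Each term is $(\lim_\xi\Lambda(\xi))^{\pi_0(X_n^+)}$ by the scheme case, and the totalization computes global sections of the constant sheaf with value $\lim_\xi\Lambda(\xi)$ on $X$, which is $(\lim_\xi\Lambda(\xi))^{\pi_0(X)}$; discreteness is preserved because the cosimplicial object is levelwise discrete and the descent datum is that of a sheaf of sets (there are no higher homotopies to assemble, the relevant mapping spaces being homotopy-discrete). Constructibility descends because $\underline f^*$ (or $u^*$) detects constructibility on atlases and the scheme-level complexes are constructible. Compatibility under extension of scalars along $\lambda'\to\lambda$ reduces, by the same atlas argument, to the scheme case, where it is Gabber's statement that $\sO_{\xi}\otimes_{\Lambda(\xi)}\Lambda'(\xi')\simeq\sO'_{\xi'}$, packaged into the functor $\underline\pi^*$.

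The main obstacle I expect is the bookkeeping around $\pi_0$: one must be careful that ``potential dualizing complex'' is only defined up to the freedom of twisting by an invertible function on each connected component, so the naive limit over $\Xi$ could a priori fail to be nonempty or could have the wrong automorphisms if the transition maps $\sO_{\xi'}\otimes\Lambda(\xi)\to\sO_\xi$ are not chosen coherently across $\Xi$. The resolution is that Gabber's construction of a potential dualizing complex, once a dimension function on $\sf S$ is fixed, is functorial in the coefficient ring (tensoring commutes with $f^!$ up to the canonical base-change isomorphism), so there is a canonical choice making $(\sO_\xi)_\xi$ a genuine object of the limit; I would spell this out by invoking the compatibility of $\underline{f}^!$ with extension of scalars already recorded in \Sec\ref{1ss:enhanced_six} and the fact that over a field-like ring $\Lambda(\xi)$ Gabber's pinned dualizing complex is rigid (has only scalar automorphisms), which forces the limit category to be discrete with the asserted endomorphism ring. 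A secondary, more routine point is checking that ``excellent $+$ dimension function'' is exactly Gabber's hypothesis and that the zero-dimensional Gorenstein condition on each $\Lambda(\xi)$ is what makes the relative dualizing complex an honest dualizing complex with $\Hom(\sO,\sO)=\Lambda(\xi)$ rather than a shifted invertible module.
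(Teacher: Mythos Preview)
Your proposal is correct and rests on the same three ingredients as the paper's proof: Gabber's result for a scheme with a single coefficient ring, the limit description $\underline{\cD}(X,\lambda)\simeq\lim_{\Xi^{op}}\cD(X,\Lambda(\xi))$, and smooth descent to pass from schemes to higher Artin stacks. The organization differs, however. The paper performs the two reductions in the opposite order: it first treats a general $X\in\Chpars$ with singleton $\Xi$, proving by induction on the Artin level $k$ (using universal $\EO{}{\Chparl}{!}{}$-descent of an atlas and the fact that limits of $0$-truncated spaces are $0$-truncated) that mapping spaces between pinned dualizing complexes are discrete and that a distinguished equivalence exists; it then computes $\pi_0\Map(\sO,\sO)\simeq\Lambda^{\pi_0(X)}$ directly via a connectedness argument on an atlas $\coprod_I Y_i\to X$. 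Only afterwards does it pass to general $\Xi$, and here it does something more explicit than you: rather than invoking functoriality of Gabber's construction in the coefficient ring, it builds $\sO_{\sf S,\lambda}$ on the base scheme $\sf S$ as a colimit over the category $\del_{/\Xi}$ of simplices of $\N(\Xi)$, using the functors $\bL e_{\xi!}$ and $t_{\le\xi}$ and the distinguished equivalences between the $\sO_{\sf S,\xi}$; then $\sO_{f,\lambda}=f^!\sO_{\sf S,\lambda}$ furnishes existence on $X$, and the limit description of mapping spaces in $\underline{\cD}(X,\lambda)$ (the observation the paper cites as essentially \cite{Lu1}*{A.3.2.27}) gives the endomorphism ring. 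Your route is a bit more streamlined once one accepts the coherence of Gabber's construction across coefficient rings; the paper's route avoids that appeal by writing down an explicit object of $\cD(\sf S,\lambda)$, which also makes existence on general $X$ immediate via $f^!$ without a separate descent step.
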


In the proof, we will use the following observation which is essentially \cite{Lu1}*{A.3.2.27}. Let $\cC\colon K^{\triangleleft}\to\Cat$ be a functor that is a limit diagram. Let $X,Y$ be two objects in the limit $\infty$-category $\cC_{-\infty}$ and write $X_k,Y_k$ the natural images in $\cC_k$ for every vertex $k$ of $K$. Then $\Map_{\cC_{-\infty}}(X,Y)$ is naturally the homotopy limit (in the $\infty$-category $\cH$ of spaces) of a diagram $K\to\cH$ sending $k$ to $\Map_{\cC_k}(X_k,Y_k)$.

\begin{proof}
We first consider the case where $\Xi=*$ is a singleton.

In this case, if $X$ is in $\Schqcs$, then the proposition is proved in \cite{Ill} (see \cite{LZ1}*{6.5.3}). We also note that if
$\Omega_\dS$ is a pinned dualizing complex on $\dS$, then $f^!\Omega_\dS$ is a pinned dualizing complex on $X$. We prove by induction on $k$ that for an object $f\colon X\to\dS$ of $\Chpars$ with $X$ in $\Chpar{k}$,
\begin{enumerate}
  \item For any two pinned dualizing complexes $\Omega$ and $\Omega'$, $\Map_{\cD(X,\Lambda)}(\Omega,\Omega')$ is discrete\footnote{More precisely, it means that $\Map_{\cD(X,\Lambda)}(\Omega,\Omega')$ is equivalent to a discrete set in $\cH$.};

  \item There is a unique distinguished equivalence $o\colon\Omega\to\Omega'$ such that for every atlas $u\colon X_0\to X$ with $X_0$ in $\Schqcs$, $u^!o$ is the one preserving pinning.
\end{enumerate}
It is clear that once the equivalence $o$ in (2) exists, it is compatible under $f^!$ for every smooth morphism $f$. Choose an atlas $u\colon Y\to X$ (with $Y$ in $\Chpar{(k-1)}$). Since $u$ is of universal $\EO{}{\Chpar{}_\Box}{!}{}$-descent, both (1) and (2) follow from the induction hypothesis, the above observation, and the fact that limit of $k$-truncated spaces is $k$-truncated (which follows from \cite{Lu1}*{5.5.6.5}).

Then we show that $\Map_{\cD(X,\Lambda)}(\Omega,\Omega)\simeq\pi_0\Map_{\cD(X,\Lambda)}(\Omega,\Omega)$ is isomorphic to $\Lambda^{\pi_0(X)}$. Without loss of generality, we assume that $X$ is connected. Choose an atlas $u=\coprod_Iu_i\colon \coprod_IY_i\to X$ with $Y_i$ in $\Schqcs$ that is connected. We have the following commutative diagram
\[
\xymatrix{
\Lambda \ar[r]^-{\alpha} \ar@{=}[d] & \pi_0\Map_{\cD(X,\Lambda)}(\Omega,\Omega) \ar[d]^-{\beta} \\
\Lambda \ar[r] &
\bigoplus_I\pi_0\Map_{\cD(Y_i,\Lambda)}(u_i^!\Omega,u_i^!\Omega). }
\]
Since $u^!$ is conservative, we know that $\beta$ is injective. Since $\Lambda\to\pi_0\Map_{\cD(Y_i,\Lambda)}(u_i^!\Omega,u_i^!\Omega)$ is an isomorphism for every $i\in I$, we know that $\alpha$ is injective. If we write elements of $\bigoplus_I\pi_0\Map_{\cD(Y_i,\Lambda)}(u_i^!\Omega,u_i^!\Omega)$ in the coordinate form $(\dots,\lambda_i,\dots)$ with respect to the basis consisting of distinguished equivalences, then the image of $u^!$ must belong to the diagonal since $X$ is connected. Therefore, $\alpha$ is an isomorphism. The fact that pinned dualizing complexes are constructible and compatible under extension of scalars follows from the case of schemes.

We then consider the case of general coefficient $\lambda=(\Xi,\Lambda)$. We start by constructing a pinned dualizing complex $\Omega_{\dS,\lambda}$ on the base scheme $\dS$. Recall that $\del_{/\Xi}$ is the category of simplices of $\Xi$, whose $n$-simplices are degenerate for $n\geq2$. For every object $\xi$ of $\Xi$, denote by $\Omega_{\dS,\xi}$ the pinned dualizing complex in $\cD(\dS,\Lambda(\xi))$. Recall the functors $e_{\xi!}$ \eqref{1eq:limit1} and $t_{\leq\xi}$ \eqref{1eq:limit2}. Define a functor $\delta\colon\rN(\del_{/\Xi})\to\cD(\dS,\lambda)$ sending a typical subcategory $\xi\leftarrow(\xi\to\xi')\rightarrow\xi'$ of $\del_{/\Xi}$ to
\[
\xymatrix{
\rL e_{\xi!}\Omega_{\dS,\xi} & \rL e_{\xi!}(\Omega_{\dS,\xi'}\overset{\rL}{\otimes}_{\Lambda(\xi')}\Lambda(\xi))
\simeq t_{\leq\xi}\rL e_{\xi'!}\Omega_{\dS,\xi'} \ar[l]\ar[r] &  \rL e_{\xi'!}\Omega_{\dS,\xi'} }
\]
in which the left arrow is given by the distinguished equivalence $\Omega_{\dS,\xi'}\overset{\rL}{\otimes}_{\Lambda(\xi')}\Lambda(\xi)\xrightarrow{\sim}\Omega_{\dS,\xi}$. It is easy to see that
$\Omega_{\dS,\lambda}\coloneqq\lim\delta$, viewed as an element in $\cD(\dS,\lambda)$, satisfies the two requirements in Definition
\ref{1de:dualizing_complex}, hence is a pinned dualizing complex. For an object $f\colon X\to\dS$ of $\Chpars$, put $\Omega_{f,\lambda}=f^!\Omega_{\dS,\lambda}$. Then it is a pinned dualizing complex on $X$. The rest of the proposition follows from the fact that $\Omega_{f,\lambda}$ is adic, Theorem \ref{1th:limit}, the observation before the proof, and the same assertion when $\Xi$ is a singleton.
\end{proof}

\begin{definition}\label{1de:dualizing}
We introduce the following dualizing functors:
\begin{align*}
\cD=\cD_X\coloneqq\HOM_X(-,\Omega_{X,\lambda})&\colon\cD(X,\lambda)^{op}\to\cD(X,\lambda), \\
\cD^\ra=\cD^\ra_X\coloneqq\HOM^\ra_X(-,\Omega_{X,\lambda})&\colon\cD(X,\lambda)_\ra^{op}\to\cD(X,\lambda)_\ra.
\end{align*}
Put $\rD=\rh\cD$ and $\rD^\ra=\rh\cD^\ra$.
\end{definition}

\begin{proposition}
Let $(X,\lambda)$ be an object of $\Chpar{}\times\rN(\Rind)$. Let $\sfK\in\cD(X,\lambda)_\ra$ be an object such that $\delta_{\Omega_{X,\Lambda(\xi)}}(e_\xi^*\sfK)$ is an equivalence for every object $\xi$ of $\Xi$, where $\delta$ is the biduality transformation \eqref{1eq:biduality1}. Then $\delta^\ra_{\Omega_{X,\lambda}}(\sfK)$ is an equivalence as well, where $\delta^\ra$ is the biduality transformation \eqref{1eq:biduality2}.
\end{proposition}

\begin{proof}
We need to show that the natural morphism $\sfK\to\rD^\ra\rD^\ra\sfK$ is an isomorphism (in the homotopy category $\rh\cD(X,\lambda)_\ra$). By definition, we have
\begin{align*}
\rD^\ra\rD^\ra\sfK
&=\rh\HOM^\ra(\sfK,\rh\HOM^\ra(\sfK,\Omega_{X,\lambda}))\\
&\simeq\rh\fR_X\rh\HOM(\sfK,\rh\fR_X\rh\HOM(\sfK,\Omega_{X,\lambda})) \\
&\simeq\rh\fR_X\rh\HOM(\sfK,\rh\HOM(\sfK,\Omega_{X,\lambda})).
\end{align*}
It suffices to show that the map $\delta_{\Omega_{X,\lambda}}(\sfK)\colon\sfK\to\rh\HOM(\sfK,\rh\HOM(\sfK,\Omega_{X,\lambda}))$ is an equivalence. In fact, since $\sfK$ is adic, we have
\begin{align*}
e_\xi^*\rh\HOM(\sfK,\rh\HOM(\sfK,\Omega_{X,\lambda}))
&\simeq\rh\HOM(e_\xi^*\sfK,\rh\HOM(e_\xi^*\sfK,e_\xi^*\Omega_{X,\lambda}))\\
&\simeq\rh\HOM(e_\xi^*\sfK,\rh\HOM(e_\xi^*\sfK,\Omega_{X,\Lambda(\xi)}))
\end{align*}
for every object $\xi\in\Xi$ by Lemma \ref{1le:adic_hom} below, which is equivalent to $e_\xi^*\sfK$ by the assumption.
\end{proof}

\begin{lem}\label{1le:adic_hom}
Let $\lambda=(\Xi,\Lambda)$ be an object of $\Rind$, $\xi$ an object of $\Xi$, and $\sK$ an object of $\cD(X,\lambda)_\ra$. Then the following diagram
\[
\xymatrix{
\cD(X,\lambda) \ar[d]_-{e_\xi^*} && \cD(X,\lambda) \ar[d]^-{e_\xi^*}\ar[ll]_-{-\otimes_X\sfK} \\
\cD(X,\Lambda(\xi)) && \cD(X,\Lambda(\xi)) \ar[ll]_-{-\otimes_X e_\xi^*\sfK} }
\]
is right adjointable and its transpose is left adjointable. In other words, the natural maps $e_{\xi!}(\sfL\otimes_X e_\xi^*\sfK)\to(e_{\xi!}\sfL)\otimes_X\sfK$ and $e_\xi^*\HOM_X(\sfK,\sfL')\to\HOM(e_\xi^*\sfK,e_\xi^*\sfL')$ are equivalences for objects $\sfL$ of $\cD(X,\Lambda(\xi))$ and $\sfL'$ of $\cD(X,\lambda)$.
\end{lem}

\begin{proof}
By \cite{LZ1}*{6.2.7}, we may assume that $\xi$ is the final object of $\Xi$. In this case, $e_\xi^*$ can be identified with $\pi_*$, where $\pi\colon(\Xi,\Lambda)\to(\{\xi\},\Lambda(\xi))$ is the projection. Since $\sfK$ is adic, the morphism $\pi^*e_\xi^*\sfK\to\sfK$ is an equivalence. A left adjoint of the transpose of the above diagram is then given by the diagram
\[
\xymatrix{
\cD(X,\lambda) \ar[d]_-{-\otimes_X\sfK} && \cD(X,\Lambda(\xi)) \ar[ll]_-{\pi^*}\ar[d]^-{-\otimes_X e_\xi^*\sfK}  \\
\cD(X,\lambda) && \cD(X,\Lambda(\xi)) \ar[ll]_-{\pi^*}.
}
\]
The lemma follows by adjunction.
\end{proof}

\section{The $\fm$-adic formalism and constructibility}
\label{2}

In this chapter, we make a finer study of the adic formalism for a special kind of ringed diagrams, which we call the $\fm$-adic formalism. It includes the most common application, namely, the $\ell$-adic one. We start in \Sec\ref{2ss:madic} by introducing such $\fm$-adic formalism. In \Sec\ref{2ss:finiteness_condition}, we introduce the finiteness condition under which the $\fm$-adic formalism behaves in a very nice way. In \Sec\ref{2ss:constructible_adic}, we study the constructible adic complexes and their behaviour under the six operations. The last section \Sec\ref{2ss:compatibility} is dedicated to proving the compatibility between our theory and Laszlo--Olsson \cites{LO2,LO3} under their restrictions.

\subsection{The $\fm$-adic formalism}
\label{2ss:madic}

\begin{definition}\label{2de:pring}
Define a category $\PRing$ as follows. The objects are pairs $(\Lambda,\fm)$, where $\Lambda$ is a (small) ring and $\fm\subseteq\Lambda$ is a principal ideal, such that
\begin{itemize}
  \item $\fm$ is generated by an element that is not a zero divisor;

  \item the natural homomorphism $\Lambda\to\lim_{n}\Lambda_n$ is an isomorphism, where $\Lambda_n=\Lambda/\fm^{n+1}$ ($n\in\dN$).
\end{itemize}
A morphism from $(\Lambda',\fm')$ to $(\Lambda,\fm)$ is a ring homomorphism $\phi\colon\Lambda\to\Lambda'$ satisfying $\phi(\fm)\subseteq\fm'$. We denote by $\PRing_{\tor}$ (resp.\ $\PRing_{\ltor}$) the full subcategory of $\PRing$ spanned by $(\Lambda,\fm)$ such that $(\dN,\Lambda_\bullet)$ belongs to $\Rind_\tor$ (resp.\ $\Rind_{\ltor}$).

We have a natural functor $\PRing\to\Fun([1],\Rind)$ sending $(\Lambda,\fm)$ to $(\dN,\Lambda_\bullet)\xrightarrow{\pi}(\ast,\Lambda)$. In what follows, we simply write $\Lambda_\bullet$ for the ringed diagram $(\dN,\Lambda_\bullet)$.
\end{definition}

Let $(\Lambda,\fm)$ be an object of $\PRing$. Let $X\in\Chpar{}$ be a higher Artin stack. We have a pair of adjoint functors
\[
\rL\pi^*\colon\cD(X,\Lambda)\to\cD(X,\Lambda_\bullet),\quad\rR\pi_*\colon\cD(X,\Lambda_\bullet)\to\cD(X,\Lambda).
\]
Note that our notation here is different from those in \cite{LZ1} as we add $\rL$ and $\rR$ for the ``derived'' functors, since later we will consider $(\pi^*,\pi_*)$ on the level of Abelian categories. As $\pi$ is perfect in the sense of \cite{LZ1}*{2.2.8}, the functor $\rL\pi^*$ admits a left adjoint \cite{LZ1}*{6.2.6} and in particular preserves small limits.

\begin{definition}[Normalized complex]
A complex $\sfK\in\cD(X,\Lambda_\bullet)$ is said \emph{normalized} if the cofiber\footnote{The underlying object in the ordinary triangulated category is a cone \cite{Lu2}*{1.1.2.11}.} \cite{Lu2}*{1.1.1.6} of the adjunction map $\rL\pi^*\rR\pi_*\sfK\to\sfK$ is $0$. We denote by
$\cD(X,\Lambda_\bullet)_\rn$ the full subcategory of $\cD(X,\Lambda_\bullet)$ spanned by normalized complexes.
\end{definition}

The subcategory $\cD(X,\Lambda_\bullet)_\rn\subseteq\cD(X,\Lambda_\bullet)$ is a stable subcategory stable under small limits. Note that $\cD(X,\Lambda)=\cD(X,\Lambda)_\ra$, so that the image of $\rL\pi^*$ is contained in $\cD(X,\Lambda_\bullet)_\ra$. In particular, we have
$\cD(X,\Lambda_\bullet)_\rn\subseteq\cD(X,\Lambda_\bullet)_\ra$. For the other direction, we have the following result. We define
$\cD(X,\Lambda_\bullet)^{(+)}_\ra=\cD(X,\Lambda_\bullet)_\ra\cap\cD^{(+)}(X,\Lambda_\bullet)$\footnote{We deliberately not denote this intersection by $\cD^{(+)}(X,\Lambda_\bullet)_\ra$ as $\cD(X,\Lambda_\bullet)_\ra$ carries a usual t-structure itself, and we do not know whether $\cD^{(+)}(X,\Lambda_\bullet)_\ra=\cD(X,\Lambda_\bullet)^{(+)}_\ra$ always holds. However, see Remark \ref{2re:bound}.}.

\begin{lem}\label{2le:adic_normalize}
We have $\cD(X,\Lambda_{\bullet})_\ra^{(+)}\subseteq\cD(X,\Lambda_\bullet)_\rn$.
\end{lem}

\begin{proof}
The proof is similar to \cite{Zh}*{4.13}.
\end{proof}

As the operations $\otimes$, $f^*$, $f_!$ preserve adic complexes, they preserve normalized complexes in $\cD^{(+)}(-,\Lambda_\bullet)$. Next we examine effects of $\HOM$, $f_*$, $f^!$ on normalized complexes, which imply that the restrictions of $\HOM^\ra$, $f_{*\ra}$, $f^{!\ra}$ to $\cD(-,\Lambda_\bullet)_\ra^{(+)}$ coincide with $\HOM$, $f_*$, $f^!$, respectively.

\begin{proposition}\label{2pr:hom_normalize}
Let $X$ be a higher Artin (resp.\ higher Deligne--Mumford) stack and let $(\Lambda,\fm)$ be an object of $\PRing_{\ltor}$ (resp.\ $\PRing$). For $\sfK,\sfL\in\cD(X,\Lambda_\bullet)_\rn$, and more generally for $\sfK,\sfL$ in the essential image of $\rL\pi^*$, the complex $\HOM_X(\sfK,\sfL)$ is adic. In particular, up to equivalence, $\HOM_X$ restricts to the functor
\[
\HOM_X^\ra\colon(\cD(X,\Lambda_\bullet)^{(+)}_\ra)^{op}\times\cD(X,\Lambda_\bullet)^{(+)}_\ra\to\cD(X,\Lambda_\bullet)_\ra.
\]
\end{proposition}

\begin{proof}
By the Poincar\'e duality, we may reduce to the case of schemes. Then it is essentially proved in \cite{Zh}*{4.18}.
\end{proof}

\begin{proposition}\label{2pr:star_pushforward_normalize}
Let $f\colon Y\to X$ be a morphism of higher Artin stacks and let $(\Lambda,\fm)$ be an object of $\PRing$. Then $f_*\colon\cD(Y,\Lambda_\bullet)\to\cD(X,\Lambda_\bullet)$ preserves normalized complexes. In particular, up to equivalence, $f_*$ restricts to the functor
\[
f_{*\ra}\colon\cD(Y,\Lambda_\bullet)^{(+)}_\ra\to\cD(X,\Lambda_\bullet)^{(+)}_\ra.
\]
\end{proposition}

\begin{proof}
This follows from the fact that $f_*$ commutes with $\rL\pi^*$ \cite{LZ1}*{6.2.6}.
\end{proof}

\begin{proposition}\label{2pr:shrink_pullback_normalize}
Let $f\colon Y\to X$ be a morphism locally of finite type in $\Chpar{}_\Box$ and let $(\Lambda,\fm)$ be an object of $\PRing_{\ltor}$. Then $f^!\colon\cD(X,\Lambda_\bullet)\to \cD(Y,\Lambda_\bullet)$ preserves normalized complexes. In particular, up to equivalence, $f^!$ restricts to the functor
\[
f^{!\ra}\colon\cD(X,\Lambda_\bullet)^{(+)}_\ra\to\cD(X,\Lambda_\bullet)^{(+)}_\ra.
\]
\end{proposition}

\begin{proof}
By the Poincar\'e duality applied to atlases, we can reduce the proposition to the case of a closed immersion of schemes, which follows from the fact that $f^!$ commutes with $\rL\pi^*$ \cite{LZ1}*{6.2.7}.
\end{proof}

The truncation functors $\tau^{\le n}$, $\tau^{\ge n}$ do not preserve normalized complexes in general. In the rest of this section, we study the effects of the truncation functors on normalized complexes.

Let $\cA$ be an Abelian category. An object $M_\bullet$ in $\Fun(\dN^{op},\cA)$ is called \emph{essentially null} if for each $n\in\dN$, there is an element $r\in\dN$ such that $M_{r+n}\to M_n$ is the zero morphism. If $\cA$ admits sequential limits, then we have a left exact functor $\lim\colon\Fun(\dN^{op},\cA)\to\cA$. Given a topos $X$, we have a pair of adjoint functors
\[
\pi^*\colon \Mod(X,\Lambda)\to\Mod(X^\dN,\Lambda_\bullet),\quad\pi_*\colon\Mod(X^\dN,\Lambda_\bullet)\to\Mod(X,\Lambda)
\]
induced by the morphism $\pi\colon(\dN,\Lambda_\bullet)\to(\ast,\Lambda)$. Then we have
\[
\pi_*=\lim\circ\nu,
\]
where $\nu\colon\Mod(X^\dN,\Lambda_\bullet)\to\Fun(\dN^{op},\Mod(X,\Lambda))$ is the obvious forgetful functor, which is exact.

\begin{lem}\label{2le:essentially_null}
Let $F_{\bullet}$ be a module in $\Mod(X^\dN,\Lambda_\bullet)$ such that $\nu F_\bullet$ is essentially null. Then $\rR^n\pi_*F_\bullet=0$ for
all $n\geq0$.
\end{lem}

\begin{proof}
Note that $\rR^n\pi_*F_\bullet$ is the sheaf associated to the presheaf $(U\mapsto\rH^n(U^\dN,F_\bullet))$, where $U$ runs over objects of $X$.
Let $a\colon(U^\dN,\Lambda_\bullet)\to(\ast^\dN,\Lambda_\bullet)$ be the morphism of ringed topoi. Since $\rR^q a_*F_\bullet$ is essentially null for all $q$, we have $\rR\Gamma(U^\dN,F_\bullet)\simeq\rR\lim\rR a_*F_\bullet=0$.
\end{proof}

Suppose that $X$ is a higher Artin stack and let $(\Lambda,\fm)$ be an object of $\PRing$. Let $\cD_0(X,\Lambda_\bullet)$ be the full subcategory of $\cD(X,\Lambda_\bullet)$ spanned by complexes whose cohomology sheaves are all essentially null. Put $\cD^{(+)}_0(X,\Lambda_\bullet)=\cD^{(+)}(X,\Lambda_\bullet)\cap\cD_0(X,\Lambda_\bullet)$. Both are stable subcategories.

\begin{lem}\label{2le:pi_vanish}
For $\sfK\in\cD^{(+)}_0(X,\Lambda_\bullet)$, we have $\rR\pi_*\sfK=0$.
\end{lem}

\begin{proof}
Let $f_0\colon X_0\to X$ be a smooth atlas and let $X_\bullet$ be a \v{C}ech nerve of $f_0$. Then we have $\sfK\simeq \lim f_{n*}f_n^*\sfK$ by \cite{LZ1}*{6.2.13 (1)}, where $f_n\colon X_n\to X$ is the induced morphism. Therefore, we have
\[
\rR\pi_*\sfK\simeq \lim \rR\pi_*f_{n*}f_n^*\sfK\simeq\lim f_{n*}\rR\pi_*f_n^*\sfK.
\]
Thus it suffices to show the lemma for each $X_n$. By induction, we may assume $X\in\Schqcs$ and $\sfK\in\cD^+(X,\Lambda_\bullet)\cap\cD_0(X,\Lambda_\bullet)$. Then the statement follows from Lemma \ref{2le:essentially_null}.
\end{proof}

\begin{definition}
A complex $\sfK\in\cD(X,\Lambda_\bullet)$ is called \emph{essentially normalized} if the cofiber of the adjunction map $\rL\pi^*\rR\pi_*\sfK\to\sfK$ is in $\cD^{(+)}_0(X,\Lambda_\bullet)$. We denote by $\cD(X,\Lambda_\bullet)_{\r{en}}$ the full subcategory of
$\cD(X,\Lambda_\bullet)$ spanned by essentially normalized complexes, which is a stable subcategory.
\end{definition}

\begin{lem}\label{2le:essentially_normalize}
The image of the functor $\rL\pi^*\circ\rR\pi_*\res\cD(X,\Lambda_\bullet)_{\r{en}}$ is contained in $\cD(X,\Lambda_\bullet)_\rn$. Moreover, the induced functor
\[
\rL\pi^*\circ\rR\pi_*\colon\cD(X,\Lambda_\bullet)_{\r{en}}\to\cD(X,\Lambda_\bullet)_\rn
\]
is right adjoint to the obvious inclusion $\cD(X,\Lambda_\bullet)_\rn\subseteq\cD(X,\Lambda_\bullet)_{\r{en}}$.
\end{lem}

\begin{proof}
For the fist assertion, we need to show that $\rL\pi^*\rR\pi_*\rL\pi^*\rR\pi_*\sfK\to\rL\pi^*\rR\pi_*\sfK$ is an equivalence for $\sfK\in\cD(X,\Lambda_\bullet)_{\r{en}}$. By definition, the cofiber of $\rL\pi^*\rR\pi_*\sfK\to\sfK$ is contained in
$\cD^{(+)}_0(X,\Lambda_\bullet)$. The assertion then follows from Lemma \ref{2le:pi_vanish}.

For the second assertion, we need to show that the natural transformation $\rL\pi^*\circ\rR\pi_*\to\id$ induces a homotopy equivalence (that is, an equivalence in $\cH$)
\[
\Map_{\cD(X,\Lambda_\bullet)_\rn}(\sfK,\rL\pi^*\rR\pi_*\sfL)\to\Map_{\cD(X,\Lambda_\bullet)_{\r{en}}}(\sfK,\sfL),
\]
for every object $\sfK$ (resp.\ $\sfL$) of $\cD(X,\Lambda_\bullet)_\rn$ (resp.\ $\cD(X,\Lambda_\bullet)_{\r{en}}$). By definition, the cofiber
$\sfL'$ of $\rL\pi^*\rR\pi_*\sfL\to\sfL$ is in $\cD^{(+)}_0(X,\Lambda_\bullet)$, and $\sfK$ is equivalent to $\rL\pi^*\rR\pi_*\sfK$. Therefore, the assertion follows from the fact that
\[
\Map_{\cD(X,\Lambda_\bullet)_{\r{en}}}(\rL\pi^*\rR\pi_*\sfK,\sfL')\simeq\Map_{\cD(X,\Lambda)}(\rR\pi_*\sfK,\rR\pi_*\sfL')\simeq \{\ast\}.
\]
Here in the second equivalence, we have used the fact $\rR\pi_*\sfL'=0$, which follows from Lemma \ref{2le:pi_vanish}.
\end{proof}

\begin{lem}\label{2le:hom_vanish}
For $\sfK\in\cD(X,\Lambda_\bullet)_\ra\cap\cD^{(-)}(X,\Lambda_\bullet)$ and $\sfL\in\cD^{(+)}_0(X,\Lambda_\bullet)$, we have $\HOM_X(\sfK,\sfL)\in\cD^{(+)}_0(X,\Lambda_\bullet)$ and $\Hom(\sfK,\sfL)=0$.
\end{lem}

\begin{proof}
The proof is similar to \cite{Zh}*{4.19}.
\end{proof}

\begin{lem}\label{2le:normalize_truncation}
For every $\sfK\in\cD(X,\Lambda_\bullet)_\rn$ and $n\in\dZ$, we have $\tau^{\geq n}\sfK\in\cD(X,\Lambda_\bullet)_{\r{en}}$. Moreover, the
fiber of the adjunction map $\rL\pi^*\rR\pi_*\tau^{\geq n}\sfK\to\tau^{\geq n}\sfK$ is concentrated in degree $n-1$ and belongs to $\cD_0(X,\Lambda_\bullet)$.
\end{lem}

\begin{proof}
This is essentially proved in \cite{Zh}*{4.14}. Let us recall the arguments. We know that the fiber of the map $\rL\pi^*\tau^{\ge n}\rR\pi_*\sfK\to\tau^{\ge n}\rL\pi^*\rR\pi_*\sfK$ is concentrated in degree $n-1$ and belongs to $\cD_0(X,\Lambda_\bullet)$. Note that we have $\rL\pi^*\rR\pi_*\sfK\simeq\sfK$ as $\sfK\in\cD(X,\Lambda_\bullet)_\rn$. So the fiber of the map $a\colon\rL\pi^*\tau^{\ge n}\rR\pi_*\sfK\to\tau^{\ge n}\sfK$ is concentrated in degree $n-1$ and belongs to $\cD_0(X,\Lambda_\bullet)$.

Consider the diagram
\[
\xymatrix{
\rL\pi^*\rR\pi_*\rL\pi^*\tau^{\ge n}\rR\pi_*\sfK \ar[d]_{\rL\pi^*\rR\pi_*a}\ar[r]^-b&\rL\pi^*\tau^{\ge n} \rR\pi_* \sfK\ar[d]^a\\
\rL\pi^*\rR\pi_*\tau^{\ge n} \sfK\ar[r]^-c &\tau^{\ge n}\sfK.}
\]
By Lemma \ref{2le:essentially_null}, $\rL\pi^*\rR\pi_*a$ is an equivalence. By Proposition \ref{2le:adic_normalize}, $b$ is an equivalence. Therefore, the fiber of $c$ is equivalent to the fiber of $a$.
\end{proof}

\begin{proposition}\label{2pr:essentially_normalize_t}
Let $\Mod(X,\Lambda_{\bullet})_{\r{en}}$ be the full subcategory of $\cD(X,\Lambda_\bullet)_{\r{en}}$ spanned by complexes that are
concentrated at degree $0$. For $n\in\d Z$, put
\[
\cD^{\leq n}(X,\Lambda_\bullet)_{\r{en}}=\cD^{\leq n}(X,\Lambda_\bullet)\cap\cD(X,\Lambda_\bullet)_{\r{en}},\quad
\cD^{\geq n}(X,\Lambda_\bullet)_{\r{en}}=\cD^{\geq n}(X,\Lambda_\bullet)\cap\cD(X,\Lambda_\bullet)_{\r{en}}.
\]
Then
\begin{enumerate}
  \item $(\cD^{\leq 0}(X,\Lambda_\bullet)_{\r{en}},\cD^{\geq 0}(X,\Lambda_\bullet)_{\r{en}})$ defines a t-structure on $\cD(X,\Lambda_{\bullet})_{\r{en}}$ whose heart is $\Mod(X,\Lambda_\bullet)_{\r{en}}$; and

  \item $\Mod(X,\Lambda_\bullet)_{\r{en}}$ is (equivalent to the nerve of) a full subcategory of $\Mod(X,\Lambda_\bullet)$, closed under kernels, cokernels and extensions.
\end{enumerate}
\end{proposition}

\begin{proof}
For (1), we only need to show that $\tau^{\leq 0}$ and $\tau^{\geq 0}$ preserve the full subcategory $\cD(X,\Lambda_\bullet)_{\r{en}}$. Since
$\cD(X,\Lambda_\bullet)_{\r{en}}$ is a stable full subcategory, we only need to prove this for $\tau^{\geq 0}$, that is, the cofiber of the
adjunction map $\rL\pi^*\rR\pi_*\tau^{\geq0}\sfK\to\tau^{\geq0}\sfK$ is in $\cD^{(+)}_0(X,\Lambda_\bullet)$ for every object $\sfK$ of
$\cD(X,\Lambda_\bullet)_{\r{en}}$. Consider the diagram
\[
\xymatrix{
\rL\pi^*\rR\pi_*\tau^{\ge 0} \rL\pi^*\rR\pi_*\sfK \ar[r]^-b\ar[d]_{\rL\pi^*\rR\pi_*a} & \tau^{\ge 0}\rL\pi^*\rR\pi_*\sfK\ar[d]^a\\
\rL\pi^*\rR\pi_*\tau^{\ge 0}\sfK \ar[r]^-c & \tau^{\ge 0}\sfK.}
\]
By definition, the cofiber of $\rL\pi^*\rR\pi_*\sfK\to\sfK$ is in $\cD^{(+)}_0(X,\Lambda_\bullet)$, so that the cofiber of $a$ is in
$\cD^{(+)}_0(X,\Lambda_\bullet)$. It follows that $\rL\pi^*\rR\pi_*a$ is an equivalence, by Lemma \ref{2le:pi_vanish}. By Lemma
\ref{2le:essentially_normalize}, we have $\rL\pi^*\rR\pi_*\sfK\in\cD(X,\Lambda_\bullet)_\rn$. Thus, by Lemma \ref{2le:normalize_truncation}, the cofiber of $b$ is in $\cD^{(+)}_0(X,\Lambda_\bullet)$. Therefore, by the octahedral axiom, the cofiber of $c$ is in $\cD^{(+)}_0(X,\Lambda_\bullet)$ as well.

For (2), it follows from (1).
\end{proof}

\begin{corollary}\label{2co:normalize_t}
The essential image of
\[
\rL\pi^*\circ\rR\pi_*\res\cD^{\geq n}(X,\Lambda_\bullet)_{\r{en}}\colon\cD^{\geq n}(X,\Lambda_\bullet)_{\r{en}}\to\cD(X,\Lambda_\bullet)_\rn
\]
is right perpendicular to the full subcategory $\cD(X,\Lambda_\bullet)_\rn\cap\cD^{<n}(X,\Lambda_\bullet)$ of $\cD(X,\Lambda_\bullet)_\rn$.
\end{corollary}

\subsection{Finiteness conditions}
\label{2ss:finiteness_condition}

Let $X\in\Chpar{}$ be a higher Artin stack and $(\Lambda,\fm)$ an object of $\PRing$. Recall that we have the full subcategory
\[
\cD(X,\Lambda_\bullet)_\rn\subseteq\cD(X,\Lambda_\bullet)_\ra.
\]

\begin{definition}\label{2de:admissible}
The pair $(X,(\Lambda,\fm))$ is said \emph{admissible} if $\cD(X,\Lambda_\bullet)_\ra\subseteq\cD(X,\Lambda_\bullet)_\rn$ (so that
$\cD(X,\Lambda_\bullet)_\ra=\cD(X,\Lambda_\bullet)_\rn$), that is, for every $\sfK\in\cD(X,\Lambda_\bullet)_\ra$, the adjunction map $\rL\pi^*\rR\pi_*\sfK\to\sfK$ is an equivalence.
\end{definition}

\begin{proposition}\label{2pr:adic_t}
Let $(X,(\Lambda,\fm))$ be an admissible pair. Then we have
\begin{enumerate}
  \item $\fR_X\res\cD(X,\Lambda_\bullet)_{\r{en}}\simeq\rL\pi^*\circ\rR\pi_*$, where $\fR_X$ is the colocalization functor in Remark \ref{1re:limit} (1); and

  \item $\cD^{\geq n}(X,\Lambda_\bullet)_\ra$ is the essential image of $\rL\pi^*\circ\rR\pi_*\res\cD^{\geq n}(X,\Lambda_\bullet)_{\r{en}}$.
\end{enumerate}
\end{proposition}

\begin{proof}
Part (1) follows from Lemma \ref{2le:essentially_normalize}.

For (2), we denote by $\cD'$ the essential image of $\rL\pi^*\circ\rR\pi_*\res\cD^{\geq n}(X,\Lambda_\bullet)_{\r{en}}$. Then $\cD'\subseteq\cD^{\geq n}(X,\Lambda_\bullet)_\ra$ by Corollary \ref{2co:normalize_t}. Now we take $\sfK\in\cD^{\ge n}(X,\Lambda_\bullet)_\ra$. As $(X,(\Lambda,\fm))$ is an admissible pair, $\sfK$ is in $\cD(X,\Lambda_\bullet)_\rn$, that is, $\rL\pi^*\rR\pi_*\sfK\simeq\sfK$. Since $\rL\pi^*\colon\cD(X,\Lambda)\to\cD(X,\Lambda_\bullet)_\ra$ is left t-exact, we know that $\rR\pi_*\res\cD(X,\Lambda_\bullet)_\ra\colon\cD(X,\Lambda_\bullet)_\ra\to\cD(X,\Lambda)$ is right t-exact. Thus $\sfK\simeq\rL\pi^*\rR\pi_*\sfK\simeq\rL\pi^*\tau^{\ge n}\rR\pi_*\sfK$. As the fiber of $\rL\pi^*\tau^{\ge n}\rR\pi_*\sfK\to\tau^{\ge n}\rL\pi^*\rR\pi_*\sfK$ is concentrated in degree $n-1$ and belongs to $\cD_0(X,\Lambda_\bullet)$, so is the fiber of $\sfK\to\tau^{\ge n}\sfK$. Therefore, we have $\sfK\simeq\rL\pi^*\rR\pi_*\sfK\simeq\rL\pi^*\rR\pi_*\tau^{\ge n}\sfK$ by Lemma \ref{2le:pi_vanish}. In other words, we have $\sfK\in\cD'$.
\end{proof}

\begin{remark}\label{2re:t}
Let $\Mod(X,\Lambda_{\bullet})'_{\r{en}}$ be the full subcategory of $\Mod(X,\Lambda_{\bullet})_{\r{en}}$ (introduced in Proposition \ref{2pr:essentially_normalize_t}) spanned by complexes $\sfK$ such that $\rH^i\bL\pi^*\bR\pi_*\sfK=0$ for $i>0$, which is an exact category. Let $\Mod(X,\Lambda_{\bullet})_0$ be the full subcategory of $\Mod(X,\Lambda_{\bullet})_{\r{en}}$ spanned by essentially null modules, which is closed under sub-objects, quotients and extensions. If $(X,(\Lambda,\fm))$ is an admissible pair, then the projection functor
\[
\cD^{\heartsuit}(X,\Lambda_{\bullet})_\ra\to\Mod(X,\Lambda_{\bullet})'_{\r{en}}/\Mod(X,\Lambda_{\bullet})_0
\]
from the heart of $\cD(X,\Lambda_{\bullet})_\ra$ with respect to the usual t-structure to the full subcategory of $\Mod(X,\Lambda_{\bullet})_{\r{en}}/\Mod(X,\Lambda_{\bullet})_0$ spanned by the image of $\Mod(X,\Lambda_{\bullet})'_{\r{en}}$ is an equivalence of categories. In fact, the functor $\bL\pi^*\circ\bR\pi_*\res\Mod(X,\Lambda_{\bullet})'_{\r{en}}$ induces a quasi-inverse.
\end{remark}

\begin{remark}\label{2re:bound}
Let $(X,(\Lambda,\fm))$ be an admissible pair. Proposition \ref{2pr:adic_t} implies that $\cD^{(+)}(X,\Lambda_\bullet)_\ra=\cD(X,\Lambda_\bullet)^{(+)}_\ra$.
\end{remark}

\begin{proposition}\label{2pr:right_exact}
Let $f\colon Y\to X$ be a morphism of higher Artin stacks. Let $(\Lambda,\fm)$ be an object of $\PRing$ such that $(X,(\Lambda,\fm))$ is admissible. Then the functor
\[
f^{*\ra}\colon\cD(X,\Lambda_\bullet)_\ra\to\cD(Y,\Lambda_\bullet)_\ra
\]
is t-exact with respect to the usual t-structures.
\end{proposition}

\begin{proof}
By Remark \ref{1re:p6} (4), we only need to show the right t-exactness of $f^{*\ra}$. Take $\sfK\in\cD^{\geq n}(X,\Lambda_\bullet)_\ra$ and $\sfL\in\cD^{\le n-1}(Y,\Lambda_\bullet)_\ra$. Consider the fiber sequence
\[
\tau^{\le n-1}\sfK\to\sfK\to\tau^{\ge n}\sfK
\]
in $\cD(X,\Lambda_\bullet)$. It induces a fiber sequence
\[
f^*\tau^{\le n-1}\sfK\to f^*\sfK=f^{*\ra}\sfK\to f^*\tau^{\ge n}\sfK
\]
in $\cD(Y,\Lambda_\bullet)$. As $(X,(\Lambda,\fm))$ is admissible, $\sfK$ belongs to $\cD(X,\Lambda_\bullet)_\rn$. By Lemma \ref{2le:normalize_truncation}, we have $\tau^{\le n-1}\sfK\in\cD(X,\Lambda_\bullet)_{\r{en}}$. As $0=\tau^{\le n-1}_\ra\sfK\simeq\fR_X\tau^{\le n-1}\sfK$, which is isomorphic to $\rL\pi^*\rR\pi_*\tau^{\le n-1}\sfK$ by Proposition \ref{2pr:adic_t}, we know that $\tau^{\le n-1}\sfK\in\cD^{(+)}_0(X,\Lambda_\bullet)$ by Lemma \ref{2le:normalize_truncation}. Thus we have $f^*\tau^{\le n-1}\sfK\in\cD^{(+)}_0(Y,\Lambda_\bullet)$, and $\Hom(\sfL,f^*\tau^{\le n-1}\sfK)=0$ by Lemma \ref{2le:hom_vanish}. It follows that $\Hom(\sfL,f^{*\ra}\sfK)=0$. Therefore, $f^{*\ra}\sfK\in\cD^{\geq n}(Y,\Lambda_\bullet)_\ra$. The proposition is proved.
\end{proof}

Let $f\colon Y\to X$ be a smooth surjective morphism in $\Chpar{}_\Box$ (resp.\ $\Chpdm{}$), and $(\Lambda,\fm)$ an object of $\PRing_{\ltor}$ (resp.\ $\PRing$). By the Poincar\'e duality, if $(Y,(\Lambda,\fm))$ is admissible, then $(X,(\Lambda,\fm))$ is locally admissible. This applies in particular to the case where $Y$ is an algebraic space. In this case, admissibility is related to the following finiteness condition on cohomological dimension.

\begin{definition}\label{2de:bounded}
Let $X$ be a higher Artin (resp.\ higher Deligne--Mumford) stack and $R$ a ring. We say that $X$ is \emph{locally $R$-bounded}, if there exists an atlas (resp.\ \'{e}tale atlas) $\coprod_{i\in I}X_i\to X$ with $X_i$ algebraic spaces such that for every $i\in I$, and every scheme $U$ \'{e}tale and of finite presentation over $X_i$, we have
\[
\r{cd}_R(U)\coloneqq\max\{n\res \r{H}^n(U,F)\neq 0\text{ for some }F\in\Mod(U,R)\}<\infty.
\]
\end{definition}

\begin{proposition}\label{2pr:five_condition}
Let $X$ be an algebraic space and $(\Lambda,\fm)$ an object of $\PRing$. Consider the following conditions:
\begin{enumerate}
  \item The pair $(X,(\Lambda,\fm))$ is admissible.

  \item For every $\sfK\in\cD(X,\Lambda_\bullet)_\ra$, we have
      \[
      \rR\pi_*(F_\bullet\overset{\rL}\otimes_{\Lambda_\bullet}\sfK)=0,
      \]
      where $F_\bullet\in\Mod(X^\dN_{\et},\Lambda_\bullet)\simeq\cD^{\heartsuit}(X,\Lambda_\bullet)$ is the module
      \[
      \cdots\xrightarrow{0}\Lambda/\fm\xrightarrow{0}\cdots\xrightarrow{0}\Lambda/\fm.
      \]

  \item We have $\rR\pi_*\sfK=0$ for every $\sfK\in\cD_0(X,\Lambda_\bullet)$.

  \item There exists an \'etale cover $\coprod_{i\in I} X_i\to X$ by algebraic spaces such that, for every $i\in I$, the cohomological dimension of $\pi_*\colon\Mod(X_{i,\et}^\dN,\Lambda_\bullet)\to\Mod(X_{i,\et},\Lambda)$ is finite.

  \item The algebraic space $X$ is locally $(\Lambda/\fm)$-bounded.
\end{enumerate}
We have $(5)\Rightarrow(4)\Rightarrow(3)\Rightarrow(2)\Leftrightarrow(1)$.
\end{proposition}

\begin{proof}
$(5)\Rightarrow(4)$: By the \'{e}tale base change, we can assume $\r{cd}_{\Lambda/\fm}(U)=N<\infty$ for every scheme $U$ \'{e}tale and of finite type over $X$. Since for $n\in\dN$, every $\Lambda_n=\Lambda/\fm^{n+1}$-module is a successive extension of $\Lambda/\fm$-modules, we have $\r{cd}_{\Lambda_n}(U)=N$. For a sheaf $F_\bullet\in\Mod(X_{\et}^\dN,\Lambda_\bullet)$, $\rR^i\pi_*F_\bullet$ is the sheaf associated to the presheaf $U\mapsto\rH^i(U_{\et}^\dN,F_\bullet)$. Thus, from the exact sequence
\[
\xymatrix{
0 \ar[r] & \rR^1\lim_n\rH^{i-1}(U_{\et},F_n) \ar[r] & \rH^i(U_{\et}^\dN,F_\bullet)
\ar[r] & \lim_n\rH^i(U_{\et},F_n) \ar[r] & 0, }
\]
we know that $\rR^i\pi_*F_\bullet=0$ for $i>N+1$.

$(4)\Rightarrow(3)$: We can assume $X$ quasi-compact. Then this follows from Lemma \ref{2le:pi_vanish} and the following standard observation: If $f\colon\cB\to \cA$ is a left exact additive functor of Grothendieck Abelian categories such that $\rR^if=0$ for $i>d$ for some integer $d\geq0$, then $\rR f$ sends $\cD^{\leq n}(\cB)$ to $\cD^{\leq n+d}(\cA)$. In fact, let $X$ be an element of $\cD^{\le n}(\cB)$. By \cite{KS}*{14.3.4}, we can compute $\rR f X$ by $fY$, where $Y$ is an \emph{arbitrary} resolution of $X$ with $f$-acyclic components. We can take $Y$ to be the image of $\tau^{\le n+d+1}$ of a homotopically injective resolution with injective components (fibrant replacement) of $X$. Then $Y$ is a complex concentrated in degree $\leq n+d+1$. This shows that $\rR f$ sends $\cD^{\le n}(\cB)$ to $\cD^{\le n+d+1}(\cA)$. It follows that $\rR f$ sends $\cD^{\le n}(\cB)$ to $\cD^{\le n+d}(\cA)$ by truncation.

$(3)\Rightarrow(2)$: In fact, for every $\sfK\in\cD(X,\Lambda_\bullet)$, we have $F_\bullet\overset{\rL}\otimes_{\Lambda_{\bullet}}\sfK\in\cD_0(X,\Lambda_{\bullet})$.

$(2)\Leftrightarrow(1)$: For $\sfK\in\cD(X,\Lambda_\bullet)_\ra$, we need to show that $\rR\pi_*(F_\bullet\overset{\rL}\otimes_{\Lambda_\bullet}\sfK)=0$ if and only if the adjunction map $\rL\pi^*\rR\pi_*\sfK\to\sfK$ is an equivalence. Since $\prod_{n\in\dN}e_n^*$ is conservative, the latter is equivalent to the condition that the morphism
\[
\epsilon\colon\Lambda_n\overset{\rL}{\otimes}_{\Lambda}\rR\pi_*\sfK\to\sfK_n\coloneqq e_n^*\sfK
\]
is an isomorphism in $\cD(X,\Lambda_n)$ for every $n\in\dN$. The morphism $\epsilon$ can be decomposed as
\begin{align*}
\Lambda_n\overset{\rL}{\otimes}_\Lambda\rR\pi_*\sfK
&\xrightarrow{\alpha} \rR\pi_*(\rL\pi^*\Lambda_n\overset{\rL}{\otimes}_{\Lambda_\bullet}\sfK)
\xrightarrow{\beta}\rR\pi_*(\pi^*\Lambda_n\overset{\rL}{\otimes}_{\Lambda_\bullet}\sfK) \\
&\xrightarrow{\gamma}\rR(\pi_{\ge n})_*(\pi_{\ge n}^*\Lambda_n\overset{\rL}{\otimes}_{\Lambda_{\bullet,\ge n}}\sfK_{\ge n})
\xrightarrow{\delta}\rR(\pi_{\ge n})_*(e_{n*}\sfK_n)\simeq\sfK_n,
\end{align*}
where $\pi_{\ge n}\colon(\dN_{\ge n},\Lambda_{\bullet,\ge n})\to(\ast,\Lambda)$ and $e_n\colon (\{n\},\Lambda)\to(\dN_{\ge n},\Lambda_{\bullet,\ge n})$ are obvious morphisms. Here, $\dN_{\ge n}\subseteq \dN$ is the full subcategory spanned by integers $\ge n$. We show that $\alpha$, $\gamma$ and $\delta$ are all isomorphisms.

By assumption, $\fm$ is generated by an element $\lambda$ that is not a zero divisor. Thus we have a finite free resolution $[\Lambda\xrightarrow{\times\lambda^{n+1}}\Lambda]$ of $\Lambda_n$ as a $\Lambda$-module. Therefore, $\rL\pi^*\Lambda_n$ is represented by the
complex of $\Lambda_\bullet$-modules $[\Lambda_\bullet\xrightarrow{\times\lambda^{n+1}}\Lambda_\bullet]$ (in degrees $-1$ and $0$). This implies that $\rL\pi^*\Lambda_n\overset{\rL}{\otimes}_{\Lambda_\bullet}\sfK$ is represented by the mapping cone of
$\sfK\xrightarrow{\times\lambda^{n+1}}\sfK$, which is a fibrant object. Then $\Lambda_n\overset{\rL}{\otimes}_{\Lambda}\rR\pi_*\sfK$ and
$\rR\pi_*(\rL\pi^*\Lambda_n\overset{\rL}{\otimes}_{\Lambda_\bullet}\sfK)$ are both represented by
$\pi_*\sfK'\xrightarrow{\times\lambda^{n+1}}\pi_*\sfK'$, where $\sfK'$ is a fibrant replacement of $\sfK$, and $\alpha$ is represented by the identity. Thus, $\alpha$ is an isomorphism.

For $\gamma$, consider the diagram
\[
\xymatrix{
(\dN_{\ge n},\Lambda_{\bullet,\ge n})\ar[r]^-{j}\ar[d] &(\dN,\Lambda_\bullet)\ar[d]\\
(\dN_{\ge n},\lambda_{\ge n}) \ar[r]^-{j'}\ar[rd]_{\pi'_{\ge n}} & (\dN,\lambda)\ar[d]^{\pi'}\\ &\Lambda,}
\]
where $\lambda$ is the constant ring with value $\Lambda$. By the cofinality of $\dN_{\ge n}$ in $\dN$, the natural transformation $\pi'_*\to(\pi'_{\ge n})_*\circ {j'}^*$ is an isomorphism. Since $j'_*$ admits an \emph{exact} left adjoint, it follows that $\rR\pi'_*\to\rR(\pi'_{\ge n})_*\circ{j'}^*$ is an isomorphism. Thus the natural transformation $\rR\pi_*\to\rR\pi_{\ge n}\circ j^*$ is an isomorphism. Therefore, $\gamma$ is an isomorphism.

The morphism $\delta$ is induced by the morphism
\[
\pi_{\ge n}^*\Lambda_n\overset{\rL}{\otimes}_{\Lambda_{\bullet,\ge n}}\sfK_{\ge n}\to e_{n*}e_n^*(\pi_{\ge n}^*\Lambda_n\overset{\rL}{\otimes}_{\Lambda_{\bullet,\ge n}}\sfK_{\ge n})\simeq e_{n*}\sfK_n
\]
which is an isomorphism since $\sfK$ is adic.

As summary, $\epsilon$ is an isomorphism if and only if $\beta$ is. By the above resolution of $\Lambda$, the cone of $\rL\pi^*\Lambda_n\to\Lambda_n$ is $G^n_\bullet[-2]$, where $G^n_m\coloneqq\Lambda/\fm^{\min(m,n)+1}$ and the transition maps are multiplication by $\lambda$, so that $G^0_\bullet =F_\bullet$. Thus, if $\beta$ is an isomorphism for $n=0$, then $\rR\pi_*(F_\bullet\overset{\rL}\otimes_{\Lambda_\bullet}\sfK)=0$. For $n\ge 1$, $G^n_\bullet$ is an extension of $F_\bullet$ by $G^{n-1}_{\bullet+1}$. Thus, if $\rR\pi_*(F_\bullet\overset{\rL}\otimes_{\Lambda_\bullet}\sfK)=0$, then, by the
above reason, $\beta$ is an isomorphism for all $n\in\dN$.
\end{proof}

\subsection{Constructible adic complexes}
\label{2ss:constructible_adic}

In this section, we fix an object $(\Lambda,\fm)$ of $\PRing$ such that $\Lambda/\fm^{n+1}$ is Noetherian for all $n$.

For a higher Artin stack $X\in\Chpar{}$, we put
\begin{align*}
\cD(X,\Lambda_\bullet)_{\ra,\rc}&\coloneqq\cD(X,\Lambda_\bullet)_\ra\cap \cD_\cons(X,\Lambda_\bullet),\\
\cD(X,\Lambda_\bullet)_{\ra,\rc}^{(+)}&\coloneqq\cD(X,\Lambda_\bullet)_\ra\cap\cD_\cons^{(+)}(X,\Lambda_\bullet),\\
\cD(X,\Lambda_\bullet)_{\ra,\rc}^{(-)}&\coloneqq\cD(X,\Lambda_\bullet)_\ra\cap\cD_\cons^{(-)}(X,\Lambda_\bullet),\\
\cD(X,\Lambda_\bullet)_{\ra,\rc}^{(\rb)}&\coloneqq\cD(X,\Lambda_\bullet)_\ra\cap\cD_\cons^{(\rb)}(X,\Lambda_\bullet).
\end{align*}
Note that we always have $\cD(X,\Lambda_\bullet)_{\ra,\rc}^{(-)}=\cD^{(-)}(X,\Lambda_\bullet)_\ra\cap\cD_\cons(X,\Lambda_\bullet)$. By Remark \ref{2re:bound}, if $(X,(\Lambda,\fm))$ is an admissible pair, then we have
\begin{align*}
\cD(X,\Lambda_\bullet)_{\ra,\rc}^{(+)}&=\cD^{(+)}(X,\Lambda_\bullet)_\ra\cap\cD_\cons(X,\Lambda_\bullet),\\
\cD(X,\Lambda_\bullet)_{\ra,\rc}^{(\rb)}&=\cD^{(\rb)}(X,\Lambda_\bullet)_\ra\cap\cD_\cons(X,\Lambda_\bullet)
\end{align*}
as well.

The following proposition is an immediate consequence of the above definitions and \cite{LZ1}*{6.4.4}.

\begin{proposition}\label{2le:constructible_adic}
Let $f\colon Y\to X$ be a morphism of higher Artin stacks. Then $f^*$ and $-\otimes_X-$ restrict to the following functors:
\begin{description}
  \item[1L'] $f^{*\ra}\colon\cD(X,\Lambda_\bullet)_{\ra,\rc}\to\cD(Y,\Lambda_\bullet)_{\ra,\rc}$.

  \item[3L'] $-\atimes_X-\colon\cD(X,\Lambda_\bullet)^{(-)}_{\ra,\rc}\times\cD(X,\Lambda_\bullet)^{(-)}_{\ra,\rc}
      \to\cD(X,\Lambda_\bullet)^{(-)}_{\ra,\rc}$.
\end{description}
In particular, we have a symmetric monoidal subcategory $(\cD(X,\Lambda_\bullet)^{(-)}_{\ra,\rc})^\otimes$ of $\cD(X,\Lambda_\bullet)^\otimes_\ra$.
\end{proposition}

As in \cite{LZ1}*{\Sec6.4}, to state the results for the other operations, we work in a relative setting. Let $\dS$ be a $\Box$-coprime higher Artin stack. Assume that there exists an atlas $S\to\dS$, where $S$ is either a quasi-excellent scheme or a regular scheme of dimension $\le 1$. Combining \cite{LZ1}*{6.4.4, 6.4.5} and Propositions \ref{2pr:hom_normalize}, \ref{2pr:star_pushforward_normalize}, \ref{2pr:shrink_pullback_normalize}, we have the following two propositions.

\begin{proposition}\label{2pr:constructible_adic}
Suppose $(\Lambda,\fm)\in\PRing_{\ltor}$. Let $f\colon Y\to X$ be a morphism of $\Chpars$. Then $f_!$, $f_*$, $f^!$, $\HOM_X$ restrict
to the following functors:
\begin{description}
  \item[2L'] $f_{!\ra}\colon\cD(Y,\Lambda_\bullet)^{(-)}_{\ra,\rc}\to\cD(X,\Lambda_\bullet)^{(-)}_{\ra,\rc}$ if $f$ is of finite presentation (see \cite{LZ1}*{5.4.3} for the definition), and $f_{!\ra}\colon\cD(Y,\Lambda_\bullet)_{\ra,\rc}\to\cD(X,\Lambda_\bullet)_{\ra,\rc}$ if $f$ is of finite presentation and $0$-Artin.

  \item[1R'] $f_{*\ra}\colon\cD(Y,\Lambda_\bullet)^{(+)}_{\ra,\rc}\to\cD(X,\Lambda_\bullet)^{(+)}_{\ra,\rc}$ if $f$ is quasi-compact and quasi-separated (see \cite{LZ1}*{5.4.3} for the definition).

  \item[2R'] $f^{!\ra}\colon\cD(X,\Lambda_\bullet)^{(+)}_{\ra,\rc}\to\cD(Y,\Lambda_\bullet)^{(+)}_{\ra,\rc}$.

  \item[3R'] $\HOM^\ra_X\colon(\cD(X,\Lambda_\bullet)^{(\rb)}_{\ra,\rc})^{op}\times\cD(X,\Lambda_\bullet)^{(+)}_{\ra,\rc}
      \to\cD(X,\Lambda_\bullet)^{(+)}_{\ra,\rc}$.
\end{description}
\end{proposition}

\begin{proposition}\label{2pr:un_constructible_adic}
Suppose $(\Lambda,\fm)\in\PRing_{\ltor}$. Let $f\colon Y\to X$ be a morphism of $\Chpars$ such that both $(X,(\Lambda,\fm))$ and $(Y,(\Lambda,\fm))$ are admissible (Definition \ref{2de:admissible}). Then
\begin{enumerate}
  \item $f_*$ restricts to a functor
      \[
      f_{*\ra}\colon\cD(Y,\Lambda_\bullet)_{\ra,\rc}\to\cD(X,\Lambda_\bullet)_{\ra,\rc}
      \]
      if $\dS$ is locally finite-dimensional and $f$ is quasi-compact and quasi-separated and $0$-Artin;

  \item $f^!$ restricts to a functor
      \[
      f^{!\ra}\colon\cD(X,\Lambda_\bullet)_{\ra,\rc}\to\cD(Y,\Lambda_\bullet)_{\ra,\rc}
      \]
      if $\dS$ is locally finite-dimensional;

  \item $\HOM_X$ restricts to a functor
      \[
      \HOM^\ra_X\colon(\cD(X,\Lambda_\bullet)^{(-)}_{\ra,\rc})^{op}\times\cD(X,\Lambda_\bullet)^{(+)}_{\ra,\rc}
      \to\cD(X,\Lambda_\bullet)^{(+)}_{\ra,\rc}.
      \]
\end{enumerate}
\end{proposition}

Let $X$ be a scheme in $\Schqcs$. Recall that a complex $\sfK\in\cD(X,\Lambda_\bullet)$ is a \emph{$\lambda$-complex} \cite{LO2}*{3.0.6} if $\rH^n\sfK$ is constructible and almost adic. In particular, we have $\sfK\in\cD_\cons(X,\Lambda_\bullet)$. The proofs of the following
statements are similar to \cite{Zh}.

\begin{lem}\label{2le:lambda_complex}
Let $X$ be a scheme in $\Schqcs$ such that Condition (3) in Proposition \ref{2pr:five_condition} holds for the pair $(X,(\Lambda,\fm))$. Let
$\cD(X,\Lambda_{\bullet})_{\r{en},\rc}$ be the full subcategory of $\cD(X,\Lambda_\bullet)_{\r{en}}$ spanned by $\lambda$-complexes. We have
\begin{enumerate}
   \item $\cD(X,\Lambda_{\bullet})_{\r{en},\rc}$ is closed under the truncation functors $\tau^{\geq n}$ and $\tau^{\leq n}$.

   \item The essential image of $\rL\pi^*\rR\pi_*\cD(X,\Lambda_{\bullet})_{\r{en},\rc}$ coincides with $\cD(X,\Lambda_\bullet)_{\ra,\rc}$.
\end{enumerate}
\end{lem}

\begin{proposition}
Let the assumptions be as in the above lemma.
\begin{enumerate}
  \item Put $\cD^{\leq n}(X,\Lambda_\bullet)_{\ra,\rc}\coloneqq\cD^{\leq n}(X,\Lambda_\bullet)_\ra\cap\cD_{\cons}(X,\Lambda_\bullet)$. Then the right perpendicular full subcategory $\cD^{\geq n}(X,\Lambda_{\bullet})_{\ra,\rc}$ of $\cD^{\leq n-1}(X,\Lambda_{\bullet})_{\ra,\rc}$ in $\cD(X,\Lambda_{\bullet})_{\ra,\rc}$ coincides with the essential image of
      $\rL\pi^*\rR\pi_*(\cD(X,\Lambda_{\bullet})_{\r{en},\rc}\cap\cD^{\geq n}(X,\Lambda_\bullet)_{\r{en}})$.

  \item For truncation functors on $\cD(X,\Lambda_\bullet)_\ra$, we have
      \[
      \tau_\ra^{\leq n}\simeq\rL\pi^*\circ\rR\pi_*\circ\tau^{\leq n},\quad\tau_\ra^{\geq n}\simeq\rL\pi^*\circ\rR\pi_*\circ\tau^{\geq n}.
      \]
\end{enumerate}
\end{proposition}

\begin{corollary}\label{2co:constructible_adic_t}
Let $X\in\Chpar{}$ be a higher Artin stack that is locally $(\Lambda/\fm)$-bounded. Then the full subcategory $\cD(X,\Lambda_\bullet)_{\ra,\rc}$ is preserved under the truncation functors $\tau_\ra^{\leq n}$ hence $\tau_\ra^{\geq n}$ on $\cD(X,\Lambda_\bullet)_\ra$.
\end{corollary}

\subsection{Compatibility with Laszlo--Olsson}
\label{2ss:compatibility}

We prove the compatibility between our adic formalism and Laszlo--Olsson's \cite{LO2}, under their assumptions.

Put $\Box=\{\ell\}$ where $\ell$ is a rational prime. Let $\dS$ be a $\Box$-coprime scheme satisfying that
\begin{enumerate}
  \item it is affine excellent and finite-dimensional;

  \item for every scheme $X$ of finite type over $\dS$, there exists an \'{e}tale cover $X'\to X$ such that $\r{cd}_{\ell}(Y)<\infty$\footnote{According to our notation, $\r{cd}_{\ell}$ is nothing but $\r{cd}_{\d{F}_{\ell}}$.} for every
      scheme $Y$ \'{e}tale and of finite type over $X'$;

  \item it admits a global dimension function and we fix such a function (see \cite{LZ1}*{6.5.1}).
\end{enumerate}

Recall from \cite{LZ1}*{\Sec6.5} that we denote $\Chplmb{\dS}$ the full subcategory of $\Chplft{\dS}$ spanned by ($1$-)Artin stacks locally of finite type over $S$, with quasi-compact and separated diagonal.

For the coefficient, we fix a complete discrete valuation ring $\Lambda$ with the maximal ideal $\fm$ and residue characteristic $\ell$ such that $\Lambda=\lim_n\Lambda_n$, where $\Lambda_n=\Lambda/\f{m}^{n+1}$, as in \cite{LO2}. In particular, $(\Lambda,\fm)$ is an object of $\PRing$ in our notation. For every stack $\cX\in\Chplmb{\dS}$, the pair $\cX$ is locally $(\Lambda/\fm)$-bounded.

From the definition of $\cD(\cX,\Lambda_\bullet)_{\ra,\rc}$, which is the full subcategory of $\cD(\cX,\Lambda_\bullet)$ spanned by
constructible adic complexes, \cite{LO2}*{3.0.10, 3.0.14, 3.0.18}, and \cite{LZ1}*{5.3.5}, we have a canonical equivalence between categories
\begin{align}\label{2eq:compatibility}
\rh\cD(\cX,\Lambda_\bullet)_{\ra,\rc}\simeq\bD_c(\cX,\Lambda),
\end{align}
where the latter one is defined in \cite{LO2}*{3.0.6}.

\begin{proposition}
For a morphism $f\colon\cY\to\cX$ of finite type in $\Chplmb{\dS}$, there are natural isomorphisms of functors:
\begin{align*}
\rh f^{*\ra}\simeq\rL f^*&\colon\bD_c(\cX,\Lambda)\to\bD_c(\cY,\Lambda),\\
\rh f_{*\ra}\simeq\rR f_*&\colon\bD_c^{(+)}(\cY,\Lambda)\to\bD_c^{(+)}(\cX,\Lambda),\\
\rh f_{!\ra}\simeq\rR f_!&\colon\bD_c^{(-)}(\cY,\Lambda)\to\bD_c^{(-)}(\cX,\Lambda),\\
\rh f^{!\ra}\simeq\rR f^!&\colon\bD_c(\cX,\Lambda)\to\bD_c(\cY,\Lambda),\\
\rh(-\atimes_\cX-)\simeq(-)\overset{\bL}{\otimes}(-)&\colon\bD_c^{(-)}(\cX,\Lambda)\times\bD_c^{(-)}(\cX,\Lambda)\to\bD_c^{(-)}(\cX,\Lambda),\\
\rh\HOM^\ra_\cX\simeq\b{\s{R}hom}_\Lambda&\colon\bD_c^{(-)}(\cX,\Lambda)^{\r{opp}}\times\bD_c^{(+)}(\cX,\Lambda)\to\bD_c^{(+)}(\cX,\Lambda)
\end{align*}
that are compatible with \eqref{2eq:compatibility}. Here, on the right side of the equivalences, we adopt notation from \cite{LO2}*{\Sec1}.
\end{proposition}

By Lemma \ref{2le:constructible_adic} and Proposition \ref{2pr:constructible_adic}, the six operations on the left side in the above proposition do have the correct range.

\begin{proof}
The isomorphisms for tensor product, internal Hom and $f^*$ simply follow from the same definitions here and in \cite{LO2}*{\Sec4, \Sec6}. The
isomorphism for $f_*$ follows from the adjunction and that for $f^*$ (\cite{LZ1}*{6.3.2}). The isomorphism for $f_!$ will follows from the
adjunction and that for $f^!$ which will be proved below.

By the compatibility of dualizing complexes and the isomorphisms for internal Hom, we have natural isomorphisms $\rD^\ra_\cX\simeq\rD_\cX$ and $\rD^\ra_\cY\simeq\rD_\cY$ (Definition \ref{1de:dualizing}). Therefore, by \cite{LO2}*{9.1}, to show the isomorphism for $f^!$, we only need to show that our functors satisfy
\[
\rh f^{!\ra}\simeq\rD^\ra_\cY\circ\rh f^{*\ra}\circ\rD^\ra_\cX.
\]
Note that for every $\sfK\in\bD_c(\cX,\Lambda)$, the biduality map $\delta^\ra_{\Omega_\cX}(\sfK)\colon\sfK\to\rD^\ra_\cX(\rD^\ra_\cX(\sfK))$ is an isomorphism by \cite{LO2}*{Theorem 7.3.1}. Thus, we have
\begin{align*}
\rh f^{!\ra}\sfK&\simeq\rh f^{!\ra}(\rD^\ra_\cX(\rD^\ra_\cX(\sfK)))\\
&=\rh f^{!\ra}(\rh\HOM^\ra_\cX(\rh\HOM^\ra_\cX(\sfK,\Omega_\cX),\Omega_\cX)) \\
&\simeq \rh\fR_\cY(\rh f^!(\rh\HOM_\cX(\rh\HOM^\ra_\cX(\sfK,\Omega_\cX),\Omega_\cX)))  \\
&\simeq \rh\fR_\cY(\rh\HOM_\cY(\rh f^*(\rh\HOM^\ra_\cX(\sfK,\Omega_\cX),f^!\Omega_\cX)))  \\
&\simeq \rh\HOM^\ra_\cY(\rh f^{*\ra}(\rh\HOM^\ra_\cX(\sfK,\Omega_\cX),\Omega_\cY))\\
&=\rD^\ra_\cY(\rh f^{*\ra}(\rD^\ra_\cX(\sfK))).
\end{align*}
The proposition is proved.
\end{proof}

\begin{remark}
In view of the above compatibility, we prove all the expected properties of the six operations, in particular the Base Change Theorem, in the adic case of Laszlo--Olsson \cite{LO2}.
\end{remark}

\section{Perverse t-structures}
\label{3}

In this chapters, we study perverse t-structures for stacks. In \Sec\ref{3ss:perversity_evaluation}, we define the notion of perversity evaluations on stacks, which we will associate t-structures. In \Sec\ref{3ss:perverse_t}, we construct the perverse t-structure with respect to a perverse evaluation. In \Sec\ref{3ss:adic_perverse_t}, we construct perverse t-structures for in the adic case. In \Sec\ref{3ss:constructible_adic_perverse_t}, we study constructibility under perverse truncations in the adic case.

\subsection{Perversity evaluations}
\label{3ss:perversity_evaluation}

We first recall various notion of perversity functions on schemes, introduced by Gabber.

\begin{definition}
Let $X$ be a scheme in $\Schqcs$. Denote by $|X|$ the underlying topological space of $X$.
\begin{enumerate}
  \item Following \cite{Gabber}*{\Sec 1}, a \emph{weak perversity function} on $X$ is a function
      \[
      p\colon|X|\to\dZ\cup\{+\infty\}
      \]
      such that for every $n\in\dZ$, the set $\{x\in|X|\res p(x)\geq n\}$ is ind-constructible.

  \item An \emph{admissible perversity function} on $X$ is a weak perversity function $p$ such that for every $x\in|X|$, there is an open dense subset $U\subseteq\overline{\{x\}}$ satisfying the condition that for every $x'\in U$, $p(x')\leq p(x)+2\codim(x',x)$.

  \item A \emph{codimension perversity function} on $X$ is a function $p\colon|X|\to\dZ\cup\{+\infty\}$ such that for every immediate \'etale specialization $x'$ of $x$, $p(x')=p(x)+1$.
\end{enumerate}
\end{definition}

\begin{remark}
We have the following remarks concerning perversity functions.
\begin{enumerate}
  \item A weak perversity function on a locally Noetherian scheme is locally bounded from below.

  \item An admissible perversity function on a scheme that is locally Noetherian and of finite dimension is locally bounded from above.

  \item A codimension perversity function on a scheme is \emph{not} necessarily a weak perversity function.

  \item A codimension perversity function that is also a weak perversity function is an admissible perversity function. If $X$ is locally
      Noetherian, then a codimension perversity function is a weak perversity function and hence an admissible perversity function.

  \item A codimension perversity function is the opposite of a dimension function in the sense of \cite{PS}*{2.1.8}. If $X$ is locally
      Noetherian and admits a dimension function, then $X$ is universally catenary by \cite{PS}*{2.2.6}. In this case, immediate \'etale
      specializations coincide with immediate Zariski specializations \cite{PS}*{2.1.4}.

  \item If $p$ is a weak (resp.\ admissible, resp.\ codimension) perversity function on $X$ and $d\colon|X|\to\dZ\cup\{+\infty\}$ is a locally constant function, then $p+d$ is a weak (resp.\ admissible, resp.\ codimension) perversity function on $X$.
\end{enumerate}
\end{remark}

\begin{definition}\label{3de:moderate}
A function $q\colon\dN\to\dZ$ or $q\colon\dZ\to\dZ$ is called \emph{moderate} if $q$ and $\b{2}-q$ are both increasing. Here, $\b{2}$ is the function $\b{2}(x)=2x$ and similarly for $\b{0}$ and $\b{1}$, which will be used below.
\end{definition}

\begin{notation}\label{3no:pullback}
Let $f\colon Y\to X$ be a smooth morphism of schemes in $\Schqcs$. For functions $p\colon|X|\to\dZ\cup\{+\infty\}$ and $q\colon\dN\to\dZ$, we define the $q$-weighted pullback $f^*_qp\colon|Y|\to\dZ\cup\{+\infty\}$ by
\[
(f^*_qp)(y)=p(f(y))-q(\trdeg[k(y):k(f(y))])
\]
for every point $y\in|Y|$. In particular, we have $f_{\b{0}}^*p=p\circ f$.
\end{notation}

The following two lemmas study some stability properties of weighted pullbacks of perversity functions.

\begin{lem}\label{3le:perverse_zero}
Let $f\colon Y\to X$ be a morphism (resp.\ \'etale morphism, resp.\ \'etale morphism) of schemes in $\Schqcs$. If $p$ is a weak (resp.\ admissible, resp.\ codimension) perversity function on $X$, then $f^*_{\b{0}}p$ is a weak (resp.\ admissible, resp.\ codimension) perversity function on $Y$.
\end{lem}

\begin{proof}
We have $f^*_{\b{0}}p=p\circ f$. If $p$ is a weak perversity function, then
\[
\{y\in|Y| \mid f^*_{\b{0}}p(y)\geq n\}=f^{-1}(\{x\in|X| \mid p(x) \geq n\})
\]
is ind-constructible by \cite{EGAIV}*{1.9.5 (vi)}. The other two cases follow from the trivial fact that $\codim(y',y)=\codim(f(y'),f(y))$ for
every specialization $y'$ of $y$ on $Y$.
\end{proof}

\begin{lem}\label{3le:perverse_pullback}
Let $f\colon Y\to X$ be a morphism of locally Noetherian schemes in $\Schqcs$, locally of finite type.
\begin{enumerate}
  \item Let $p$ be a weak perversity function on $X$ and $q\colon\dN\to\dZ$ an increasing function. Then $f^*_qp$ is a weak perversity function on $Y$.

  \item Let $p$ be an admissible perversity function on $X$ an $q\colon\dN\to\dZ$ a moderate function (Definition \ref{3de:moderate}). Then $f^*_qp$ is an admissible perversity function on $Y$.

  \item Let $p$ be a codimension perversity function on $X$. Then $f^*_{\b{1}}p$ is a codimension perversity function on $Y$.
\end{enumerate}
\end{lem}

\begin{proof}
For a locally closed subset $Z$ of a scheme $X$, we endow it with the reduced induced subscheme structure. For every point $y\in|Y|$, let
$U_y\subset \overline{\{y\}}$ be a nonempty open subset such that the induced morphism $f_y\colon\overline{\{y\}}\to\overline{\{f(y)\}}$ is
flat. Such an open subset exists by \cite{EGAIV}*{6.9.1}. For $y'\in U_y$, we have
\begin{align*}
\delta(y',y)&\coloneqq\trdeg[k(y):k(f(y))]-\trdeg[k(y'):k(f(y'))]\\
&=\codim(y',U_y\times_{\overline{f(y)}}\{f(y')\})\geq 0
\end{align*}
by \cite{EGAIV}*{14.3.13} since $f_y$ is universally open \cite{EGAIV}*{2.4.6}.

For (1), we know that for every $n\in\dZ$,
\[
\{y\in|Y|\res f^*_qp(y)\geq n\}=\bigcup_{y\in|Y|}f^{-1}\left\{x\in|X|\mid p(x)\geq n+q(\trdeg[k(y):k(f(y))])\right\}\cap U_y
\]
is a union of ind-constructible subsets, and hence is itself ind-constructible. In other words, $f_q^*p$ is a weak perversity function.

For (2), let $y\in|Y|$ be a point; put $x=f(y)$; and let $U_x\subset\overline{\{x\}}$ be a dense open subset such that $p(x')\leq p(x)+2\codim(x',x)$ for every $x'\in U_x$. We prove that for $y'\in U_y\cap f^{-1}(U_x)$,
\[
f^*_qp(y')\leq f^*_qp(y)+2\codim(y',y)
\]
holds. We may assume $p(x)\in\dZ$. Put $x'=f(y')$. We have
\[
f^*_qp(y)=p(x)-q(\trdeg[k(y):k(x)])
\]
and
\[
f^*_qp(y')=p(x')-q(\trdeg[k(y'):k(x')]).
\]
 Moreover, by \cite{EGAIV}*{6.1.2}, we have
\[
\delta(y',y)=\codim(y',y)-\codim(x',x).
\]
Therefore, we have
\begin{align*}
f^*_qp(y')-f^*_qp(y)&=p(x')-p(x)+q(\trdeg[k(y):k(x)])-q(\trdeg[k(y'):k(x')])\\
&\leq2\codim(x',x)+2\delta(y',y)=2\codim(y',y)
\end{align*}
since $q$ is moderate. In other words, $f_q^*p$ is an admissible perversity function on $Y$.

For (3), it is essentially proved in \cite{PS}*{2.5.2}.
\end{proof}

Now we generalize the notion of perversity functions from schemes to stacks, by starting from the following definition.

\begin{definition}[Pointed schematic neighborhood]
Let $X$ be a higher Artin (resp.\ Deligne--Mumford) stack. A \emph{pointed smooth (resp.\ \'etale) schematic neighborhood} of $X$ is a triple
$(X_0,u_0,x_0)$ where $u_0\colon X_0\to X$ is a smooth (resp.\ an \'{e}tale) morphism with $X_0\in\Schqcs$ and $x_0\in|X_0|$ a scheme-theoretical point. A morphism $v\colon (X_1,u_1,x_1)\to(X_0,u_0,x_0)$ of pointed smooth (resp.\ \'etale) schematic neighborhoods is a smooth (resp.\ an \'{e}tale) morphism $v\colon X_1\to X_0$ such that there is a triangle
\begin{align}\label{3eq:schematic_neighborhood}
\xymatrix{
X_1 \ar[rd]_-{u_1} \ar[rr]^-{v} & & X_0 \ar[ld]^-{u_0} \\
& X }
\end{align}
with $v(x_1)=x_0$. We say that $(X_1,u_1,x_1)$ \emph{dominates} $(X_0,u_0,x_0)$ if there is such a morphism. The category of pointed smooth (resp.\ \'etale) schematic neighborhoods of $X$ is denoted by $\Vosm(X)$ (resp.\ $\Voet(X)$).
\end{definition}

\begin{lem}\label{3le:codimension}
Let $X$ be a higher Artin stack, and let $v\colon(X_1,u_1,x_1)\to(X_0,u_0,x_0)$ be a morphism of pointed smooth schematic neighborhoods of $X$. Then the codimension of $x_1$ in the base change scheme $X_{1,x_0}=X_1\times_{X_0}\{x_0\}$ depends only on the source and the target of $v$.
\end{lem}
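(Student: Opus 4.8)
The plan is to show that the number in question equals $\dim\mathcal{O}_{X_1,x_1}-\dim\mathcal{O}_{X_0,x_0}$, an expression that involves neither $v$ nor the structure morphisms $u_0$, $u_1$.

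First I would record that, since $v\colon X_1\to X_0$ is smooth, the homomorphism $\mathcal{O}_{X_0,x_0}\to\mathcal{O}_{X_1,x_1}$ induced by $v$ is flat; it is local because $v(x_1)=x_0$. The same equality $v(x_1)=x_0$ shows that $x_1$ lies on the base-change scheme $X_{1,x_0}=X_1\times_{X_0}\{x_0\}$ and that its local ring there is $\mathcal{O}_{X_1,x_1}\otimes_{\mathcal{O}_{X_0,x_0}}k(x_0)=\mathcal{O}_{X_1,x_1}/\mathfrak{m}_{x_0}\mathcal{O}_{X_1,x_1}$. Since for any scheme $Z$ and point $z\in Z$ one has $\codim(\overline{\{z\}},Z)=\dim\mathcal{O}_{Z,z}$, the codimension of $x_1$ in $X_{1,x_0}$ is $\dim\bigl(\mathcal{O}_{X_1,x_1}/\mathfrak{m}_{x_0}\mathcal{O}_{X_1,x_1}\bigr)$.

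Next I would invoke the dimension formula for a flat local homomorphism \cite{EGAIV}*{6.1.2}, which gives
\[\dim\mathcal{O}_{X_1,x_1}=\dim\mathcal{O}_{X_0,x_0}+\dim\bigl(\mathcal{O}_{X_1,x_1}/\mathfrak{m}_{x_0}\mathcal{O}_{X_1,x_1}\bigr),\]
whence $\codim(x_1,X_{1,x_0})=\dim\mathcal{O}_{X_1,x_1}-\dim\mathcal{O}_{X_0,x_0}$. As the right-hand side depends only on the pointed schemes $(X_1,x_1)$ and $(X_0,x_0)$, it depends in particular only on the source $(X_1,u_1,x_1)$ and the target $(X_0,u_0,x_0)$ of $v$, as claimed.

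The one point that needs care is the hypothesis under which \cite{EGAIV}*{6.1.2} is available: it is stated for Noetherian local rings, whereas the schemes in $\Schqcs$ need not be Noetherian. I expect this to be harmless, since $v$ is smooth and hence locally of finite presentation: one can reduce to the Noetherian case by a standard limit argument, or factor $v$ locally as an \'etale morphism followed by a projection $\mathbb{A}^n_{X_0}\to X_0$ and use that \'etale morphisms preserve the dimension of local rings while the projection visibly satisfies the formula. This is the bit I would write out carefully; everything else is immediate.
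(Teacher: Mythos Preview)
Your argument is correct and pleasantly direct when $X_0$ is locally Noetherian, but the lemma is stated for arbitrary schemes in $\Schqcs$, and there neither of your proposed fixes closes the gap. The factorization fix fails because the projection $\mathbb{A}^n_{X_0}\to X_0$ does \emph{not} satisfy the dimension formula over a general base: Seidenberg constructed one-dimensional integral domains $A$ with $\dim A[t]=3$, and taking $X_0=\Spec A$, $X_1=\mathbb{A}^1_{X_0}$, $x_0$ a suitable maximal ideal and $x_1$ a suitable prime above it, one gets $\dim\mathcal{O}_{X_1,x_1}\ge 3$ while $\dim\mathcal{O}_{X_0,x_0}+\codim(x_1,X_{1,x_0})\le 2$. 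So the identity $\codim(x_1,X_{1,x_0})=\dim\mathcal{O}_{X_1,x_1}-\dim\mathcal{O}_{X_0,x_0}$ is simply false here (and reads $\infty-\infty$ in other examples). A limit argument cannot establish a false identity, and if you instead try to descend the \emph{independence} statement to a Noetherian approximation, the model of $X_1$ you get is built from $v$, so two different $v$'s need not descend to morphisms with the same source.

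The paper avoids absolute Krull dimensions entirely. It writes $\codim(x_1,X_{1,x_0})=\dim_{x_1}(v)-\trdeg[k(x_1):k(x_0)]$, valid because the fiber is a smooth variety over $k(x_0)$; the first term equals $\dim_{x_1}u_1-\dim_{x_0}u_0$ by additivity of relative dimension along the composite $u_1\simeq u_0\circ v$, visibly independent of $v$. The substantive step is then to show that $\trdeg[k(x_1):k(x_0)]$ does not depend on $v$, which the paper does by pulling back along an atlas $Y\to X$ and rewriting it as a difference of transcendence degrees over a residue field on the scheme $Y$. All quantities in this argument are finite with no Noetherian hypothesis, which is exactly what fails for your local-ring dimensions.
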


\begin{proof}
Note that $\codim(x_1,X_{1,x_0})=\dim_{x_1}(v)-\trdeg[k(x_1):k(x_0)]$. It is clear that the term $\dim_{x_1}(v)=\dim_{x_1}(u_1)-\dim_{x_0}(u_0)$ does not depend on $v$. We will show that the other term $\trdeg[k(x_1):k(x_0)]$ does not depend on $v$ either.

Let $f\colon Y\to X$ be an atlas of $X$ with $Y$ a scheme in $\Schqcs$. Let
\[
\xymatrix{
Y_1 \ar[rr]^-{v'} \ar[dr]_-{u'_1} && Y_0 \ar[dl]^-{u'_0} \\ & Y }
\]
be the base change of \eqref{3eq:schematic_neighborhood}, and $f_0\colon Y_0\to X_0$, $f_1\colon Y_1\to X_1$ the induced morphisms. Let $w_0\colon Y'_0\to Y_0$ be an atlas with $Y'_0$ a scheme in $\Schqcs$, and let
\[
\xymatrix{
Y'_1 \ar[r]^-{v''} \ar[d]_-{w_1} & Y'_0 \ar[d]^-{w_0} \\
Y_1 \ar[r]^-{v'} & Y_0 }
\]
be the base change. Then $v''$ is a smooth morphism of schemes in $\Schqcs$. Since $f_0\circ w_0\colon Y'_0\to X_0$ is smooth and surjective, the base change scheme $Y'_{0,x_0}=Y'_0\times_{X_0}\{x_0\}$ is nonempty and smooth over the residue field $k(x_0)$ of $x_0$. Similarly, we have a nonempty scheme $Y'_{1,x_1}$, smooth over $k(x_1)$. Choose a generic point $y'_1$ of $Y'_{1,x_1}$. Then its image $y'_0$ in $Y'_{0,x_0}$ is a generic point. Let $y$ be the image of $y'_0$ in $Y$. Then we have
\[
\trdeg[k(x_1):k(x_0)]=\r{tr.deg}[k(y'_1):k(y)]-\trdeg[k(y'_0):k(y)]
\]
which does \emph{not} depend on $v$. The lemma follows.
\end{proof}

\begin{notation}\label{3no:codimension}
Let $X$ be a higher Artin stack, and let $v\colon(X_1,u_1,x_1)\to(X_0,u_0,x_0)$ be a morphism of pointed smooth schematic neighborhoods of $X$. We will denote by $\delta^{(X_1,u_1,x_1)}_{(X_0,u_0,x_0)}$ the codimension appeared in Lemma \ref{3le:codimension}. It is clear that
\[
\delta^{(X_2,u_2,x_2)}_{(X_0,u_0,x_0)}=\delta^{(X_2,u_2,x_2)}_{(X_1,u_1,x_1)}+\delta^{(X_1,u_1,x_1)}_{(X_0,u_0,x_0)}
\]
if $(X_2,u_2,x_2)$ dominates $(X_1,u_1,x_1)$. Moreover, if $v$ is \'etale, then we have $\delta^{(X_1,u_1,x_1)}_{(X_0,u_0,x_0)}=0$.
\end{notation}

\begin{notation}
For a higher Artin (resp.\ Deligne--Mumford) stack $X$ and a function $\sfp\colon\Ob(\Vosm(X))\to\dZ\cup\{+\infty\}$ (resp.\ $\sfp\colon\Ob(\Voet(X))\to\dZ\cup\{+\infty\}$), we have, by restriction, the function $\sfp_{u_0}\colon|X_0|\to\dZ\cup\{+\infty\}$ for every smooth (resp.\ \'{e}tale) morphism $u_0\colon X_0\to X$ with $X_0$ in $\Schqcs$.

If $f\colon Y\to X$ is a smooth (resp.\ an \'{e}tale) morphism of higher Artin (resp.\ Deligne--Mumford) stacks, then composition with $f$ induces a functor $f\colon \Vosm(Y)\to\Vosm(X)$ (resp.\ $f\colon \Voet(Y)\to\Voet(X)$), and we put $f^*\sfp=\sfp\circ f$.
\end{notation}

\begin{definition}[(admissible/codimension) perversity evaluations]\label{3de:perversity_evaluation}
Let $X$ be a higher Artin stack. A \emph{smooth evaluation} on $X$ is a function
\[
\sfp\colon\Ob(\Vosm(X))\to\dZ\cup\{+\infty\}
\]
such that for $(X_1,u_1,x_1)$ dominating $(X_0,u_0,x_0)$, we have
\[
\sfp(X_0,u_0,x_0) \le\sfp(X_1,u_1,x_1)\le\sfp(X_0,u_0,x_0)+2\delta^{(X_1,u_1,x_1)}_{(X_0,u_0,x_0)}.
\]

A \emph{perversity smooth evaluation} (resp.\ \emph{admissible perversity smooth evaluation}, \emph{codimension perversity smooth evaluation}) on $X$ is a smooth evaluation $\sfp$ such that for every $(X_0,u_0,x_0)\in\Ob(\Vosm(X))$, $\sfp_{u_0}$ is a weak perversity function (resp.\ admissible perversity function, codimension perversity function) on $X_0$.

Similarly, we define \'etale evaluations and (admissible/codimension) perversity \'etale evaluations on a higher Deligne--Mumford stack $X$ using $\Voet(X)$.

We say that a smooth (resp.\ \'etale) evaluation $\sfp$ is \emph{locally bounded} if for every smooth (resp.\ \'etale) morphism $u_0\colon X_0\to X$ with $X_0$ a quasi-compact separated scheme, $\sfp_{u_0}$ is bounded.
\end{definition}

\begin{remark}
If $X$ is a scheme in $\Schqcs$, then the map from the set of \'etale evaluations on $X$ to the set of functions $|X|\to\dZ\cup\{+\infty\}$, carrying $\sfp$ to $\sfp_{\id_X}$, is bijective. Under such bijection, the notions of (weak) perversity, admissible perversity, and
codimension perversity coincide. If $f\colon Y\to X$ is a morphism of schemes in $\Schqcs$, then $f^*$ for \'etale evaluations coincide with $f^*_{\b{0}}$ for functions.
\end{remark}

\begin{example}
We have the following examples of perversity smooth/\'etale evaluations.
\begin{enumerate}
  \item Let $X$ be a higher Artin (resp.\ Deligne--Mumford) stack. Then every constant smooth (resp.\ \'etale) evaluation is an admissible
      perversity smooth (resp.\ \'etale) evaluation.

  \item Let $f\colon Y\to X$ be a morphism of higher Deligne--Mumford stacks, locally of finite type. Let $\sfp$ be an \'etale evaluation on $X$, and $q\colon\dN\to\dZ$ a function. We define an \'etale evaluation $f^*_q\sfp$ on $Y$ as follows. For any object $(Y_0,v_0,y_0)$ of $\Voet(Y)$, there exists a morphism $(Y_1,v_1,y_1)\to(Y_0,v_0,y_0)$ in $\Voet(Y)$ such that there exists a diagram
      \[
      \xymatrix{Y_1\ar[r]^{v_1}\ar[d]_{f_0} & Y\ar[d]^f\\
      X_0\ar[r]^{u_0} & X,}
      \]
      where $X_0$ is in $\Schqcs$ and $u_0$ is \'etale. We put
      \[
      f^*_q\sfp(Y_0,v_0,y_0)=\sfp(X_0,u_0,f_0(y_1))-q(\trdeg[k(y_1):k(f_0(y_1))]).
      \]
      This clearly does not depend on choices. If $\sfp$ is a perversity \'etale evaluation, then $f^*_{\b{0}}\sfp$ is a perversity \'etale
      evaluation by Lemma \ref{3le:perverse_zero}.

  \item Let $f\colon Y\to X$ be a morphism of higher Artin stacks, locally of finite type, with $X$ being a higher Deligne--Mumford stack. Let $\sfp$ be an \'etale evaluation on $X$, and $q\colon\dZ\to\dZ$ a moderate function (Definition \ref{3de:moderate}). We define a smooth evaluation $f^*_q\sfp$ on $Y$ by the formula
      \[
      (f^*_q\sfp)(Y_0,v_0,y_0)=(v_0\circ f)^*_{q'}(y_0)
      \]
      for every object $(Y_0,v_0,y_0)$ of $\Vosm(Y)$, where $q'\colon\dN\to\dZ$ is the function $q'(n)=q(n-\dim_{y_0}(v_0))$. If $\sfp$ is a perversity \'etale evaluation, then $f^*_{\b{0}}\sfp$ is a perversity smooth evaluation. If $X$ is locally Noetherian and $\sfp$ is a perversity (resp.\ admissible perversity, resp.\ codimension perversity) \'etale evaluation, then $f^*_q\sfp$ (resp.\ $f^*_q\sfp$, resp.\ $f^*_{\b{1}}\sfp$) is a perversity (resp.\ admissible perversity, resp.\ codimension) smooth evaluation by Lemma \ref{3le:perverse_pullback}.
\end{enumerate}
\end{example}

\subsection{Perverse t-structures}
\label{3ss:perverse_t}

In this section, we define t-structures associated to perversity evaluations.

\begin{definition}\label{3de:complete}
Let $\cC$ be a stable $\infty$-category equipped with a t-structure. We say that $\cC$ is \emph{weakly left complete} (resp.\ \emph{weakly right complete}) if $\cC^{\le -\infty}=\bigcap_n \cC^{\le -n}$ (resp.\ $\cC^{\ge\infty}=\bigcap_n \cC^{\ge n}$) consists of zero objects.
\end{definition}

The family $(\rH^i)_{i\in \dZ}$ is conservative if and only if $\cC$ is both weakly left complete and weakly right complete (cf.\ \cite{BBD}*{1.3.7}). The following lemma slightly extends \cite{Lu2}*{1.2.1.19}.

\begin{lem}\label{3le:wlc}
Let $\cC$ be a stable $\infty$-category equipped with a t-structure. Consider the following conditions
\begin{enumerate}
  \item The $\infty$-category $\cC$ is left complete.

  \item The $\infty$-category $\cC$ is weakly left complete.
\end{enumerate}
Then (1) implies (2). Moreover, if $\cC$ admits countable products and there exists an integer $a$ such that countable products of objects of $\cC^{\le0}$ belong to $\cC^{\le a}$, then (2) implies (1).
\end{lem}

\begin{proof}
The first assertion is obvious since the image of $\cC^{\le -\infty}$ under the functor $\cC\to\widehat\cC$ consists of zero objects, where $\widehat\cC$ is defined above \cite{Lu2}*{1.2.1.17}.

To show the second assertion, it suffices to replace $f(n-1)$ by $f(n-a-1)$ in the proof of \cite{Lu2}*{1.2.1.19}.
\end{proof}

Let $X$ be a scheme in $\Schqcs$, let $p\colon|X|\to\dZ\cup\{+\infty\}$ be a function, and let $\lambda=(\Xi,\Lambda)$ be an object of $\Rind$. Following Gabber \cite{Gabber}*{\Sec 2}, we define full subcategories $\TS{p}{\cD}{\leq0}(X,\lambda),\TS{p}{\cD}{\geq0}(X,\lambda)\subseteq\cD(X,\lambda)$ as follows: For $\sfK\in\cD(X,\lambda)$,
\begin{itemize}
  \item $\sfK$ belongs to $\TS{p}{\cD}{\le 0}(X,\lambda)$ if and only if
      \[
      i^*_{\overline{x}}j^*_{\overline{x}}\sfK\in\cD^{\le p(x)}(\overline{x},\lambda)
      \]
      for every $x\in|X|$.

  \item $\sfK$ belongs to $\TS{p}{\cD}{\ge 0}(X,\lambda)$ if and only if $\sfK\in \cD^{(+)}(X,\lambda)$ and
      \[
      i^!_{\overline{x}}j^*_{\overline{x}}\sfK\in\cD^{\ge p(x)}(\overline{x},\lambda)
      \]
      for every $x\in|X|$.
\end{itemize}
Here $\overline{x}$ is a geometric point above $x$, and we have natural morphisms
\[
i_{\overline{x}}\colon\overline{x}\to X_{(\overline{x})},\quad j_{\overline{x}}\colon X_{(\overline{x})}\to X.
\]
We will omit $j_{\overline{x}}^*$ from the notation when no confusion arises.

\begin{lem}\label{3le:gabber}
If $p$ is a weak perversity function, then $(\TS{p}{\cD}{\leq 0}(X,\lambda),\TS{p}{\cD}{\geq 0}(X,\lambda))$ is a t-structure on $\cD(X,\lambda)$. Moreover,
\begin{enumerate}
  \item such t-structure is accessible;

  \item such t-structure is weakly left complete if $p$ takes values in $\dZ$;

  \item such t-structure is right complete;

  \item such t-structure is left complete if is locally bounded and every quasi-compact closed open subscheme of $X$ is $\lambda$-cohomologically finite. Here, we say that a scheme $Y$ is \emph{$\lambda$-cohomologically finite} if there exists an integer $n$ such that, for every $\xi\in\Xi$, the $\Lambda(\xi)$-cohomological dimension of the \'etale topos of $Y$ is at most $n$.
\end{enumerate}
\end{lem}

\begin{proof}
The proof of the t-structure is shown by Gabber \cite{Gabber} when $\Xi$ is a singleton. This generalizes
easily to the case of general $\Xi$ as follows. By \cite{Lu2}*{1.4.4.11}, there exists a t-structure $(\TS{p}{\cD}{\leq 0}(X,\lambda),\cD')$ on
$\cD(X,\lambda)$. For $\sfK\in\TS{p}{\cD}{\leq 0}(X,\lambda)$ and $\sfL\in\TS{p}{\cD}{\geq 0}(X,\lambda)$, we have $a_*\HOM(\sfK,\sfL[1])\in\cD^{\ge1}(\ast,\lambda)$, hence $\Hom(\sfK,\sfL[1])=\rH^0(\Xi,a_*\HOM(\sfK,\sfL[1]))=0$, where $a\colon X_{\et}\to\ast$ is the morphism of topoi. Thus we have $\TS{p}{\cD}{\geq 0}(X,\lambda)\subseteq \cD'$. For every $\xi\in\Xi$, the functor $\rL e_{\xi!}\colon\cD(X,\Lambda(\xi))\to\cD(X,\lambda)$ is left t-exact for the t-structures $(\TS{p}{\cD}{\leq0}(X,\Lambda(\xi)),\TS{p}{\cD}{\geq0}(X,\Lambda(\xi)))$ and $(\TS{p}{\cD}{\leq 0}(X,\lambda),\cD')$. It follows that $e_\xi^*$ is right t-exact for the same t-structures. Thus, we have $\cD'\subseteq\TS{p}{\cD}{\geq 0}(X,\lambda)$ as well.

For the properties, (1) and (2) follow from the definition directly; (3) follows from \cite{Gabber}*{3.1}; and (4) follows from Lemma \ref{3le:wlc}.
\end{proof}

Now we define t-structures for stacks associated to perversity evaluations. Let $X$ be a $\Box$-coprime higher Artin (resp.\ a higher Deligne--Mumford) stack equipped with a perversity smooth (resp.\ \'{e}tale) evaluation $\sfp$ (Definition \ref{3de:perversity_evaluation}), and let $\lambda$ be an object of $\Rind_{\ltor}$ (resp.\ $\Rind$). For an atlas (resp.\ \'{e}tale atlas) $u\colon X_0\to X$ with $X_0$ a scheme in $\Schqcs$, we denote by $\TS{\sfp}{\cD}{\leq0}_u(X,\lambda)\subseteq\cD(X,\lambda)$ (resp.\ $\TS{\sfp}{\cD}{\geq0}_u(X,\lambda)\subseteq\cD(X,\lambda)$) the full subcategory spanned by complexes $\sfK$ such that $u^*\sfK$ is in $\TS{\sfp_u}{\cD}{\leq0}(X_0,\lambda)$ (resp.\ $\TS{\sfp_u}{\cD}{\geq0}(X_0,\lambda)$).

\begin{proposition}\label{3pr:perverse_t}
Let $X$ be a $\Box$-coprime higher Artin (resp.\ a higher Deligne--Mumford) stack equipped with a perversity smooth (resp.\ \'{e}tale)
evaluation $\sfp$, and let $\lambda$ be an object of $\Rind_{\ltor}$ (resp.\ $\Rind$). Then
\begin{enumerate}
  \item The pair of subcategories $(\TS{\sfp}{\cD}{\leq0}_u(X,\lambda),\TS{\sfp}{\cD}{\geq0}_u(X,\lambda))$ do not depend on the choice of $u$. We will denote them by $(\TS{\sfp}{\cD}{\leq0}(X,\lambda),\TS{\sfp}{\cD}{\geq0}(X,\lambda))$.

  \item The pair of subcategories $(\TS{\sfp}{\cD}{\leq0}(X,\lambda),\TS{\sfp}{\cD}{\geq0}(X,\lambda))$ determine a right complete accessible t-structure on $\cD(X,\lambda)$, which is weakly left complete if $\sfp$ takes values in $\dZ$. Such t-structure is left complete if $\sfp$ is locally bounded and if for every smooth (resp.\ \'etale) morphism $X_0\to X$ with $X_0$ a quasi-compact separated scheme, $X_0$ is $\lambda$-cohomologically finite.

  \item If $f\colon Y\to X$ is a smooth (resp.\ \'etale) morphism, then $f^*\colon \cD(X,\lambda)\to\cD(Y,\lambda)$ is t-exact with respect to the t-structures associated to $\sfp$ and $f^*\sfp$.
\end{enumerate}
\end{proposition}

\begin{proof}
There exists $k\ge 2$ such that $X$ and $Y$ are in $\Chpar{k}$ (resp.\ $\Chpdm{k}$). We proceed by induction on $k$. The case $k=-2$ follows from Lemma \ref{3le:gabber} and Lemma \ref{3le:perverse_smooth_pullback} below. The induction step follows from the way as in \cite{LZ1}*{4.3.8, 4.3.9}.
\end{proof}

\begin{lem}\label{3le:perverse_smooth_pullback}
Let $f\colon Y\to X$ be a smooth morphism of schemes in $\Schqcs_\Box$, let $\lambda$ be an object of $\Rind_{\ltor}$, and let $p\colon|X|\to\dZ\cup\{+\infty\}$ be a function. Then $f^!$ carries $\TS{p}{\cD}{\ge 0}(X,\lambda)$ to $\TS{f^*_{\b{2}}p}{\cD}{\ge 0}(Y,\lambda)$. Moreover, if $p$ is a weak perversity function on $X$ and $q$ is a weak perversity function on $Y$ satisfying $f^*_{\b{0}}p\le q \le f^*_{\b{2}}p+2\dim f$, then $f^!\colon\cD(X,\lambda)\to\cD(Y,\lambda)$ is t-exact with respect to the t-structures associated to $p$ and $q$.
\end{lem}

\begin{proof}
The first assertion follows from Lemma \ref{3le:perverse_smooth_pullback0} below. The second assertion follows from the first assertion and the Poincar\'e duality $f^!\simeq f^*\langle\dim f\rangle$.
\end{proof}

\begin{lem}\label{3le:perverse_smooth_pullback0}
Let $f\colon Y\to X$ be a smooth morphism in $\Schqcs_\Box$, and $\lambda$ an object of $\Rind_{\ltor}$. Let $\overline{y}$ be a geometric point of $Y$ above $y$; put $\overline{x}=f(\overline{y})$ and $x=f(y)$. Then there is an equivalence of functors
\[
i_{\overline{y}}^!\circ f^!\simeq g^*\circ i_{\overline{x}}^!\langle d\rangle\colon\cD^{(+)}(X,\lambda)\to\cD^+(\overline{y},\lambda),
\]
where $g\colon\overline{y}\to\overline{x}$ is the induced morphism and $d=\trdeg[k(y):k(x)]$.
\end{lem}

\begin{proof}
Consider the diagram with Cartesian squares
\[
\xymatrix{
\overline{y}\ar[rrd]_g\ar[r]^{i_{\overline{y}}} & V\ar[r]^{j} & Y_{\overline x}\ar[d]_{f_{\overline
x}}\ar[r]^{i'_{\overline x}} & Y_{\overline{\{x\}}} \ar[r]^{i'}\ar[d]^{f_{\overline{\{x\}}}}
& Y\ar[d]^f \\
&&\overline x \ar[r]^{i_{\overline x}} & \overline{\{x\}}\ar[r]^i & X}
\]
where $V$ is a regular integral subscheme of $Y_{\overline{x}}$ such that the image of $\overline y$ in $V$ is a generic point. We have a sequence of equivalences of functors
\[
i_{\overline y}^!\circ f^! \simeq i_{\overline y}^*\circ j^! \circ i_{\overline x}^{\prime*} \circ {i'}^!\circ f^!
\simeq i_{\overline y}^*\circ j^! \circ i_{\overline x}^{\prime*} \circ f_{\overline{\{x\}}}^! \circ i^!
\simeq i_{\overline y}^*\circ j^! \circ f_{\overline x}^! \circ i_{\overline x}^* \circ i^!
\]
which, by the Poincar\'{e} duality, is equivalent to
\[
i_{\overline y}^*\circ (f_{\overline x}\circ j)^! \circ i_{\overline x}^!
\simeq i_{\overline y}^*\circ (f_{\overline x}\circ j)^* \circ i_{\overline x}^! \langle d\rangle
\simeq g^*\circ i_{\overline x}^!\langle d\rangle.
\]
The lemma follows.
\end{proof}

\begin{remark}\label{3re:stalk}
We call the t-structure in Proposition \ref{3pr:perverse_t} the \emph{perverse t-structure} with respect to $\sfp$ and denote by $\TS{\sfp}{\tau}{\leq0}$ and $\TS{\sfp}{\tau}{\geq0}$ the corresponding truncation functors, respectively.
\begin{enumerate}
  \item For every (\'{e}tale) atlas $u\colon X_0\to X$ with $X_0$ a scheme in $\Schqcs$, we have $u^*\circ\TS{\sfp}{\tau}{\leq0}\simeq \TS{\sfp_u}{\tau}{\leq0}\circ u$ and $u^*\circ\TS{\sfp}{\tau}{\geq0}\simeq \TS{\sfp_u}{\tau}{\geq0}\circ u$.

  \item If $\sfp=0$, then we recover the usual t-structure. If $X$ is a higher Deligne-Mumford stack and $\sfp$ is a perversity smooth evaluation, then the t-structure associated to $\sfp$ coincides with the t-structure associated to $\sfp\res\Voet(X)$. If $X$ is in $\Schqcs$, then the t-structure associated to $\sfp$ coincides with the t-structure defined by Gabber (as in Lemma \ref{3le:gabber}) associated to the function $\sfp_{\id_X}$.

  \item Let $\sfK$ be a complex in $\cD(X,\lambda)$. Then by definition,
      \begin{itemize}
        \item $\sfK$ belongs to $\TS{\sfp}{\cD}{\leq n}(X,\lambda)$ if and only if for every pointed smooth (resp.\ \'etale) schematic neighborhood $(X_0,u_0,x_0)$ of $X$ and a geometric point $\overline{x_0}$ lying over $x_0$, we have $i_{\overline{x_0}}^*u_0^*\sfK\in\cD^{\leq\sfp(X_0,u_0,x_0)+n}(\overline{x_0},\lambda)$.

        \item $\sfK$ belongs to $\TS{\sfp}{\cD}{\geq n}(X,\lambda)$ if and only if $\sfK\in\cD^{(+)}(X,\lambda)$, and for every pointed smooth (resp.\ \'etale) schematic neighborhood $(X_0,u_0,x_0)$ of $X$ and a geometric point $\overline{x_0}$ lying over $x_0$, we have $i_{\overline{x_0}}^!u_0^*\sfK\in\cD^{\geq\sfp(X_0,u_0,x_0)+n}(\overline{x_0},\lambda)$.
      \end{itemize}
\end{enumerate}
\end{remark}

At the end of the section, we study the restriction of perverse t-structures constructed above to various subcategory of constructible complexes. We fix a $\Box$-coprime base scheme $\dS$ that is a disjoint union of excellent schemes, endowed with a global dimension function.

\begin{proposition}
Let $\lambda=(\Xi,\Lambda)$ be an object of $\Rind_{\Box\text{-}\r{dual}}$. Let $f\colon X\to\dS$ be an object of $\Chpars$ equipped with an \emph{admissible} perversity smooth evaluation $\sfp$ (Definition \ref{3de:perversity_evaluation}). Then the truncation functors $\TS{\sfp}{\tau}{\leq0}$, $\TS{\sfp}{\tau}{\geq0}$ preserve the full subcategory $\cD_{\cons}^{(\rb)}(X,\lambda)$. Moreover, if $\sfp$ is locally bounded, then $\TS{\sfp}{\tau}{\leq0}$, $\TS{\sfp}{\tau}{\geq0}$ preserve $\cD_{\cons}^{?}(X,\lambda)$ for $?=(+),(-)$ or empty.
\end{proposition}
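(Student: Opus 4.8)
The plan is to reduce, by smooth descent, to the case where $X$ is a scheme in $\Schqcs$, where the bounded statement is Gabber's theorem, and then to derive the three unbounded statements from the bounded one by a formal argument with the perverse truncation triangles.

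First I would reduce to schemes. Constructibility of a complex, and (over a quasi-compact stack) its boundedness, are detected after pullback along an atlas; moreover, by the Remark following Proposition~\ref{2pr:perverse_t}, $u^*\circ\TS{\tp}{\tau}{\leq 0}\simeq\TS{\tp_u}{\tau}{\leq 0}\circ u^*$ and $u^*\circ\TS{\tp}{\tau}{\geq 0}\simeq\TS{\tp_u}{\tau}{\geq 0}\circ u^*$ for every atlas $u\colon X_0\to X$, and the pullback of an admissible (resp.\ locally bounded) perversity smooth evaluation along $u$ is again one. So an induction on the Artin level, run exactly as in the proof of Lemma~\ref{2le:perverse_t_indep} and Proposition~\ref{2pr:perverse_t} (using \cite{LZ}*{4.3.7, 4.3.8}) together with the stability of constructible complexes under smooth pullback and smooth descent \cite{LZ}*{6.2.3, 6.2.4}, reduces the whole proposition to the case $X=X_0$ a scheme in $\Schqcs$ locally of finite type over $\sS$, carrying the admissible perversity function $p=\tp_{\r{id}_{X_0}}$ (cf.\ the Remark after Definition~\ref{2de:perversity_evaluation} and the Remark after Proposition~\ref{2pr:perverse_t}). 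Such an $X_0$ is quasi-excellent and, on each quasi-compact open, finite-dimensional; being a weak perversity function on a locally Noetherian scheme, $p$ is locally bounded below, and being admissible on a locally Noetherian finite-dimensional scheme, it is locally bounded above.

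For a scheme $X_0$ as above I would invoke Gabber \cite{Gabber}: for a bounded admissible perversity function, $\TS{p}{\tau}{\leq 0}$ and $\TS{p}{\tau}{\geq 0}$ preserve $\cD_{\cons}^{(\r b)}(X_0,\lambda)$; the case of a ringed diagram $\lambda=(\Xi,\Lambda)$ reduces to that of a single ring since $e_\xi^*$ is $t$-exact for the perverse $t$-structures and commutes with $i_{\overline x}^*$, $i_{\overline x}^!$, hence with the perverse truncation triangles. This is where the hypothesis $\lambda\in\Rind_{\sf{L}\text{-}\r{dual}}$ enters: the assertion for $\TS{p}{\tau}{\geq 0}$ may be deduced from the one for $\TS{p}{\tau}{\leq 0}$ by biduality, since (the global dimension function on $\sS$ being fixed, and $\lambda\in\Rind_{\sf{L}\text{-}\r{dual}}$) the functor $\rD_{X_0}=\r{h}\HOM(-,\Omega_{X_0,\lambda})$ is a contravariant self-equivalence of $\cD_{\cons}^{(\r b)}(X_0,\lambda)$ by \Sec\ref{1ss:adic_dualizing} and the case of schemes, and it carries the perverse $t$-structure for $p$ to the perverse $t$-structure for the dual admissible perversity function while exchanging the lower and upper perverse truncations. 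Together with the reduction above this proves the $(\r b)$ part, applied over a quasi-compact atlas.

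Finally, the unbounded cases, with $\tp$, hence $p$, locally bounded. Working over a quasi-compact $X_0$, write $a=\inf p$, $b=\sup p$. Since the stalk functor $i_{\overline x}^*$ is $t$-exact and the costalk functor $i_{\overline x}^!$ is left $t$-exact for the usual $t$-structures, one gets $\cD^{\leq a-1}(X_0,\lambda)\subseteq\TS{p}{\cD}{\leq -1}(X_0,\lambda)$ and $\cD^{\geq b+1}(X_0,\lambda)\subseteq\TS{p}{\cD}{\geq 1}(X_0,\lambda)$. A diagram chase with the comparison of the two perverse truncation triangles shows that $\TS{p}{\tau}{\leq 0}(g)$ is an equivalence whenever $\r{cofib}(g)\in\TS{p}{\cD}{\geq 1}$ (its cofiber lies in $\TS{p}{\cD}{\leq 0}\cap\TS{p}{\cD}{\geq 1}=0$), and dually $\TS{p}{\tau}{\geq 0}(g)$ is an equivalence whenever $\r{fib}(g)\in\TS{p}{\cD}{\leq -1}$; applying this to $\tau^{\leq b}\sK\to\sK$ and to $\sK\to\tau^{\geq a}\sK$ gives $\TS{p}{\tau}{\leq 0}\sK\simeq\TS{p}{\tau}{\leq 0}(\tau^{\leq b}\sK)$ and $\TS{p}{\tau}{\geq 0}\sK\simeq\TS{p}{\tau}{\geq 0}(\tau^{\geq a}\sK)$ for all $\sK$. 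Hence for $\sK\in\cD_{\cons}^{(-)}$ one has $\tau^{\geq a}\sK\in\cD_{\cons}^{(\r b)}$, so $\TS{p}{\tau}{\geq 0}\sK\in\cD_{\cons}^{(\r b)}$ by the bounded case and $\TS{p}{\tau}{\leq -1}\sK=\r{fib}(\sK\to\TS{p}{\tau}{\geq 0}\sK)\in\cD_{\cons}^{(-)}$; dually for $\cD_{\cons}^{(+)}$; and for $\sK\in\cD_{\cons}$ one has $\tau^{\leq b}\sK\in\cD_{\cons}^{(-)}$, whence $\TS{p}{\tau}{\leq 0}\sK\in\cD_{\cons}^{(-)}\subseteq\cD_{\cons}$ by the $(-)$ case and $\TS{p}{\tau}{\geq 1}\sK=\r{cofib}(\TS{p}{\tau}{\leq 0}\sK\to\sK)\in\cD_{\cons}$. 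Since truncations at arbitrary $n$ are shifts of those at $0$, this completes the argument. The main obstacle is the bounded scheme case: it relies on Gabber's theorem and on checking that the biduality of \Sec\ref{1ss:adic_dualizing} over a general $\lambda\in\Rind_{\sf{L}\text{-}\r{dual}}$ is well behaved on $\cD_{\cons}^{(\r b)}$ and intertwines the perverse $t$-structures for dual perversities; the descent step and the recollement bookkeeping in the unbounded cases are routine by comparison.
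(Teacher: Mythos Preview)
Your proof is correct and follows the same approach as the paper: reduce by smooth descent to schemes and invoke Gabber \cite{Gabber}*{8.2}. The paper's own proof is a two-line sketch (``We reduce easily to the case of a scheme. In this case, the result is essentially \cite{Gabber}*{8.2}''), so your detailed treatment of the reduction, of the passage to general $\lambda$, and especially of the unbounded cases via the comparison $\TS{p}{\tau}{\leq 0}\sK\simeq\TS{p}{\tau}{\leq 0}(\tau^{\leq b}\sK)$ is simply a careful unpacking of what the paper leaves implicit.
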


\begin{proof}
We reduce easily to the case of a scheme. In this case, the result is essentially \cite{Gabber}*{8.2}.
\end{proof}

\subsection{Adic perverse t-structures}
\label{3ss:adic_perverse_t}

For perverse t-structures in the adic formalism, we define
\[
\TS{\sfp}{\cD}{\leq n}(X,\lambda)_\ra=\TS{\sfp}{\cD}{\leq n}(X,\lambda)\cap\cD(X,\lambda)_\ra,\quad
\TS{\sfp}{\cD}{\geq n}(X,\lambda)_\ra=\TS{\sfp}{\cD}{\leq n-1}(X,\lambda)_\ra^{\perp}
\]
both as full subcategories of $\cD(X,\lambda)_\ra$. Then the pair $(\TS{\sfp}{\cD}{\leq 0}(X,\lambda)_\ra,\TS{\sfp}{\cD}{\geq 0}(X,\lambda)_\ra)$ define a t-structure, called the \emph{adic perverse t-structure} with respect to $\sfp$, on $\cD(X,\lambda)_\ra$. Denote
$\TS{\sfp}{\tau}{\leq0}_\ra$ and $\TS{\sfp}{\tau}{\geq0}_\ra$ the corresponding truncation functors respectively. We have the
following results.

\begin{lem}\label{3le:adic_perverse_t_indep}
Let $X$ be a $\Box$-coprime higher Artin stack (resp.\ a higher Deligne--Mumford stack) equipped with a perversity smooth (resp.\ \'etale)
evaluation $\sfp$, and $\lambda$ an object of $\Rind_{\ltor}$ (resp.\ $\Rind$). Let $\sfK\in\cD(X,\lambda)_\ra$ be an (adic) complex. Let
$u\colon X_0\to X$ be an atlas (resp.\ \'{e}tale atlas) with $X_0$ a scheme in $\Schqcs$. Then $\sfK$ belongs to $\TS{\sfp}{\cD}{\leq n}(X,\lambda)_\ra$ (resp.\ $\TS{\sfp}{\cD}{\geq n}(X,\lambda)_\ra$) if and only if $u^{*\ra}\sfK$ belongs to $\TS{\sfp_u}{\cD}{\leq
n}(X_0,\lambda)_\ra$ (resp.\ $\TS{\sfp_u}{\cD}{\geq n}(X_0,\lambda)_\ra$).
\end{lem}

\begin{proof}
We only need to show that $u^{*\ra}$ is t-exact. By definition, we obviously have $u^{*\ra}\TS{\sfp}{\cD}{\leq n}(X,\lambda)_\ra\subseteq\TS{\sfp_u}{\cD}{\leq n}(X_0,\lambda)_\ra$. For the other direction, assume $\sfK\in\TS{\sfp}{\cD}{>n}(X,\lambda)_\ra$, that is, $\Hom(\sfL,\sfK)=0$ for every $\sfL\in\cD(X,\lambda)_\ra\cap\TS{\sfp}{\cD}{\leq n}(X,\lambda)$. By the Poincar\'{e} duality, it suffices to show that for every $\sfL'\in\cD(X_0,\lambda)_\ra\cap\TS{\sfp_u}{\cD}{\leq n-2\dim
u}(X_0,\lambda)$, we have $\Hom(\sfL',u^{!\ra}\sfK)=0$, or equivalently, $\Hom(u_{!\ra}\sfL',\sfK)=0$. This follows from the fact that $u_!$ preserves adic complexes and we have $u_!\sfL'\in\TS{\sfp}{\cD}{\leq n}(X,\lambda)$.
\end{proof}

\begin{proposition}\label{3pr:adic_perverse_stalk}
Let $X$ be a $\Box$-coprime higher Artin stack (resp.\ a higher Deligne--Mumford stack) equipped with a perversity smooth (resp.\ \'etale)
evaluation $\sfp$, and $\lambda$ an object of $\Rind_{\ltor}$ (resp.\ $\Rind$). Let $\sfK\in\cD(X,\lambda)_\ra$ be an (adic) complex.
\begin{enumerate}
  \item Then $\sfK$ belongs to $\TS{\sfp}{\cD}{\leq n}(X,\lambda)_\ra$ if and only if for every pointed smooth (resp.\ \'etale) schematic
      neighborhood $(X_0,u_0,x_0)$ of $X$ and a geometric point $\overline{x_0}$ lying over $x_0$, we have
      $i_{\overline{x_0}}^{*\ra}u_0^{*\ra}\sfK\in\cD^{\leq\sfp(X_0,u_0,x_0)+n}(\overline{x_0},\lambda)_\ra$.

  \item Assume that $\sfp$ is \emph{locally bounded}. Then $\sfK$ belongs to $\TS{\sfp}{\cD}{\geq n}(X,\lambda)_\ra$ if and only if
      $\sfK\in\cD^{(+)}(X,\lambda)_\ra$, and for every pointed smooth (resp.\ \'etale) schematic neighborhood $(X_0,u_0,x_0)$ of
      $X$ and a geometric point $\overline{x_0}$ lying over $x_0$, we have
      $i_{\overline{x_0}}^{!\ra}u_0^{*\ra}\sfK\in\cD^{\geq\sfp(X_0,u_0,x_0)+n}(\overline{x_0},\lambda)_\ra$.
\end{enumerate}
\end{proposition}

\begin{proof}
Part (1) is a consequence of the definition and Remark \ref{3re:stalk} (3).

For (2), by Lemma \ref{3le:adic_perverse_t_indep}, we may assume that $X\in\Schqcs$ is quasi-compact and $\sfp=p$ is a bounded weak perversity
function. Then $\sfK\in\TS{p}{\cD}{\geq n}(X,\lambda)_\ra$ is equivalent to that for every $\sfL\in\TS{p}{\cD}{<n}(X,\lambda)_\ra$, $\HOM(\sfL,\sfK)\in\cD^{>0}(X,\lambda)$, which is then equivalent to $\HOM(\sfL,\sfK)\in\cD^+(X,\lambda)$ and $i_{\overline{x}}^!\HOM(\sfL,\sfK)\in\cD^{>0}(\overline{x},\lambda)$ for every geometric point $\overline{x}$ of $X$. By Proposition \ref{1th:properties} (4), we have isomorphisms
\[
i_{\overline{x}}^!\HOM(\sfL,\sfK)\simeq\HOM(i_{\overline{x}}^*\sfL,i_{\overline{x}}^!\sfK)
\simeq\HOM(i_{\overline{x}}^{*\ra}\sfL,i_{\overline{x}}^{!\ra}\sfK).
\]
Now we may assume $\alpha<p<\beta$ for some $\alpha,\beta\in\dZ$ since $p$ is bounded. Then $\TS{p}{\cD}{<n}(X,\lambda)_\ra$ contains $\cD^{<\alpha+n}(X,\lambda)_\ra$.

Now for $\sfK\in\TS{p}{\cD}{\geq n}(X,\lambda)_\ra$, we have $\sfK\in\cD^{\geq\alpha+n}(X,\lambda)_\ra\subseteq\cD^+(X,\lambda)_\ra$ and
$i_{\overline{x}}^{!\ra}\sfK\in\cD^{\geq p(x)+n}(\overline{x},\lambda)_\ra$ for every geometric point $\overline{x}$ of $X$ lying over $x$.

Conversely, assume $\sfK\in\cD^+(X,\lambda)_\ra$, say in $\cD^{\geq\gamma}(X,\lambda)_\ra$, and $i_{\overline{x}}^{!\ra}\sfK\in\cD^{\geq p(x)+n}(\overline{x},\lambda)_\ra$ for every geometric point $\overline{x}$ of $X$ lying over $x$. We have $\HOM(\sfL,\sfK)\in\cD^{\geq\gamma-\beta-n}(X,\lambda)\subseteq\cD^+(X,\lambda)$ and $\HOM(i_{\overline{x}}^{*\ra}\sfL,i_{\overline{x}}^{!\ra}\sfK)\in\cD^{>0}(\overline{x},\lambda)$. Thus, we have $\sfK\in\TS{p}{\cD}{\geq n}(X,\lambda)_\ra$.
\end{proof}

\begin{remark}\label{3re:perverse_truncation}
Let $\sfp,\sfq$ be two perversity smooth (resp.\ \'etale) evaluations on a $\Box$-coprime higher Artin stack (resp.\ a higher Deligne--Mumford stack) $X$. Let $\lambda$ be an object of $\Rind_{\ltor}$ (resp.\ $\Rind$). Let the subscript $?$ be either ``$\ra$'' or empty.
\begin{enumerate}
  \item The intersection of the pair of subcategories $(\TS{\sfp}{\cD}{\leq0}(X,\lambda)_?,\TS{\sfp}{\cD}{\geq0}(X,\lambda)_?)$ with $\cD^{(+)}(X,\lambda)_?$ induces a t-structure on the latter stable $\infty$-category.

  \item If $\sfp\leq\sfq$, then
      \begin{enumerate}
        \item $\TS{\sfp}{\tau}{\leq0}_?$ preserves $\TS{\sfq}{\cD}{\leq0}(X,\lambda)_?$;

        \item $\TS{\sfq}{\tau}{\geq0}_?$ preserves $\TS{\sfp}{\cD}{\geq0}(X,\lambda)_?$;

        \item $\TS{\sfp}{\tau}{\geq0}_?$ is equivalent to the identity functor when restricted to $\TS{\sfq}{\cD}{\geq0}(X,\lambda)_?$;

        \item $\TS{\sfq}{\tau}{\leq0}_?$ is equivalent to the identity functor when restricted to $\TS{\sfp}{\cD}{\leq0}(X,\lambda)_?$;

        \item $\TS{\sfp}{\tau}{<0}_?$ is equivalent to the null functor when restricted to $\TS{\sfq}{\cD}{\geq0}(X,\lambda)_?$;

        \item $\TS{\sfq}{\tau}{>0}_?$ is equivalent to the null functor when restricted to $\TS{\sfp}{\cD}{\leq0}(X,\lambda)_?$.
      \end{enumerate}

  \item By (2a), if $\sfp$ is locally bounded, then the intersection of the pair of subcategories $(\TS{\sfp}{\cD}{\leq0}(X,\lambda)_?,\TS{\sfp}{\cD}{\geq0}(X,\lambda)_?)$ with $\cD^{(-)}(X,\lambda)_?$ or $\cD^{(\rb)}(X,\lambda)_?$ induces a t-structure on the latter stable $\infty$-category.

  \item By (2e) and (2f), if $X$ is quasi-compact and $\sfp$ is bounded, then there exist constant integers $\alpha<\beta$ such that
      $\TS{\sfp}{\rH}{0}_?=\TS{\sfp}{\rH}{0}_?\circ\tau^{[\alpha,\beta]}_?$, where $\TS{\sfp}{\rH}{0}_?=\TS{\sfp}{\tau}{\geq0}_?\circ\TS{\sfp}{\tau}{\leq0}_?$ is the cohomology functor.
\end{enumerate}
\end{remark}

\subsection{Constructible adic perverse t-structures}
\label{3ss:constructible_adic_perverse_t}

We fix a $\Box$-coprime base scheme $\dS$ that is a disjoint union of schemes that are excellent, quasi-compact, finite dimensional, and admit a global dimension function for which we fix one. We fix also an object $(\Lambda,\fm)$ of $\PRing_{\ltor}$ such that $\Lambda/\fm^{n+1}$ is a
($\Box$-torsion) Gorenstein ring of dimension $0$ for every $n\in\dN$ and $\dS$ is locally $(\Lambda/\fm)$-bounded.

\begin{proposition}
For an object $f\colon X\to\dS$ of $\Chpars$ equipped with an \emph{admissible} perversity evaluation $\sfp$, the truncation functors
$\TS{\sfp}{\tau}{\leq0}_\ra$ and $\TS{\sfp}{\tau}{\geq0}_\ra$ preserve $\cD^{?}(X,\Lambda_\bullet)_{\ra,\rc}$ for $?=(+),(-),(\r b)$ or empty.
\end{proposition}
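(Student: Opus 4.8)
The plan is to reduce to the case of a scheme and there to transfer the non-adic statement (the preceding Proposition, which rests on Gabber \cite{Gabber}*{8.2}) across the normalization functor $\bL\pi^*\circ\bR\pi_*$.

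\emph{Reduction to a scheme.} Since $\sf{S}$ is a disjoint union of excellent, quasi-compact, finite-dimensional schemes and $f$ is locally of finite type, any scheme $X_0\in\Schqcs$ carrying a smooth morphism $u\colon X_0\to X$ has Noetherian, finite-dimensional components; hence the admissible perversity function $\tp_u$ on $X_0$ is locally bounded. Also $X_0$ is locally $(\Lambda/\fm)$-bounded, so $(X_0,(\Lambda,\fm))$ is admissible. By Lemma \ref{2le:adic_perverse_t_indep}, $\underline u^*$ preserves $\TS{\tp}{\underline{\cD}}{\le n}$ and $\TS{\tp}{\underline{\cD}}{\ge n}$, hence commutes with $\TS{\tp}{\underline{\tau}}{\le0}$ and $\TS{\tp}{\underline{\tau}}{\ge0}$; and an object of $\underline{\cD}(X,\Lambda_\bullet)$ lies in $\underline{\cD}_{\cons}^{?}(X,\Lambda_\bullet)$ if and only if its pullback along an atlas $u$ lies in $\underline{\cD}_{\cons}^{?}(X_0,\Lambda_\bullet)$ (as $u^*$ preserves, and $u$ being surjective detects, constructibility, the adic property, and boundedness). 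Since constructibility is local, it suffices to treat a quasi-compact scheme $X=X_0\in\Schqcs$ with $\tp$ a bounded admissible perversity function.

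\emph{The scheme case.} Here $\underline{\cD}(X,\Lambda_\bullet)=\cD_{\r n}(X,\Lambda_\bullet)$ and $R_X\res\cD_{\r{en}}(X,\Lambda_\bullet)\simeq\bL\pi^*\circ\bR\pi_*$ (Proposition \ref{3pr:adic_t}). Given $\sK\in\underline{\cD}_{\cons}(X,\Lambda_\bullet)$, pick a $\lambda$-complex $\sK'$ in the sense of \cite{LO2}*{3.0.6} with $\bL\pi^*\bR\pi_*\sK'\simeq\sK$ (Lemma \ref{3le:lambda_complex}). The key point is the identification
\[\TS{\tp}{\underline{\tau}}{\le n}\sK\simeq\bL\pi^*\bR\pi_*\bigl(\TS{\tp}{\tau}{\le n}\sK'\bigr),\qquad
\TS{\tp}{\underline{\tau}}{\ge n}\sK\simeq\bL\pi^*\bR\pi_*\bigl(\TS{\tp}{\tau}{\ge n}\sK'\bigr),\]
where the right-hand sides use the non-adic perverse truncations over $\Lambda_\bullet$. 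Granting this, the non-adic Proposition makes $\TS{\tp}{\tau}{\le n}\sK'$ and $\TS{\tp}{\tau}{\ge n}\sK'$ constructible; their usual cohomology sheaves are almost adic (here admissibility enters, via Proposition \ref{3pr:adic_t} and the interleaving of the perverse and usual $t$-structures), so they are again $\lambda$-complexes, and Lemma \ref{3le:lambda_complex}(2) puts their normalizations into $\underline{\cD}_{\cons}(X,\Lambda_\bullet)$. The refinement to $?=(+),(-),(\r b)$ is bookkeeping: $\tp$ being bounded, $\TS{\tp}{\underline{\cD}}{\le 0}\subseteq\underline{\cD}^{\le\beta}$ and $\TS{\tp}{\underline{\cD}}{\ge 0}\subseteq\underline{\cD}^{\ge\alpha}$ for constants $\alpha<\beta$ (Remark \ref{2re:perverse_truncation}), and perverse truncation preserves boundedness above and below, so each flavor of $\underline{\cD}_{\cons}^{?}$ is carried into itself.

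\emph{Main obstacle.} The real work is the displayed identity, i.e.\ commuting the perverse truncation functors past $\bL\pi^*\circ\bR\pi_*$. Neither $\bL\pi^*$ nor $\bR\pi_*$ is perverse $t$-exact, so one must check that all discrepancy terms lie in $\cD_0(X,\Lambda_\bullet)$ and hence die under $\bR\pi_*$ (Lemma \ref{3le:pi_vanish}); one uses the stalk description of the adic perverse $t$-structure (Proposition \ref{2pr:adic_perverse_stalk}) together with the tor-amplitude $[-1,0]$ of $e_n^*=-\overset{\bL}{\otimes}_{\Lambda_\bullet}\Lambda_n$ (the resolution $[\Lambda\xrightarrow{\times\lambda^{n+1}}\Lambda]$ of $\Lambda_n$), exactly as in the proofs of Lemma \ref{3le:normalize_truncation} and Proposition \ref{3pr:essentially_normalize_t} for the usual $t$-structure. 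This is the only step not formally reducible to earlier results.
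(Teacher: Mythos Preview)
Your reduction to a quasi-compact scheme with bounded admissible perversity function is the same as the paper's. After that the two approaches diverge, and your route has a real gap at exactly the step you flag as the ``main obstacle.''

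The paper does \emph{not} try to commute perverse truncation past $\bL\pi^*\bR\pi_*$. Instead it runs a Noetherian induction on $X$: assuming $X$ irreducible with generic point $\eta$, it reduces via Remark~\ref{2re:perverse_truncation} to $\sK\in\underline{\cD}_{\cons}^{\r b}$ and chooses a dense open $U\subseteq X$ on which (i) $p(\eta)\le p(x)\le p(\eta)+\codim_X(x)$ and (ii) the usual cohomology sheaves of $\sK\res U$ are smooth. On such a $U$ the perverse truncation of $\sK\res U$ is literally the usual truncation shifted by $p(\eta)$, which preserves $\underline{\cD}_{\cons}$ by Corollary~\ref{3co:constructible_adic_t}; the complement is handled by the induction hypothesis and glueing. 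No comparison of adic and non-adic perverse truncations is needed.

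Your displayed identity $\TS{\tp}{\underline{\tau}}{\le n}\sK\simeq\bL\pi^*\bR\pi_*(\TS{\tp}{\tau}{\le n}\sK')$ is not justified by the sketch you give. The proofs of Lemma~\ref{3le:normalize_truncation} and Proposition~\ref{3pr:essentially_normalize_t} work because the usual truncation is computed level-wise; perverse truncation is not. Concretely, to place $\bL\pi^*\bR\pi_*(\TS{\tp}{\tau}{\le n}\sK')$ in $\TS{\tp}{\underline{\cD}}{\le n}=\TS{\tp}{\cD}{\le n}\cap\underline{\cD}$ via Proposition~\ref{2pr:adic_perverse_stalk} you would need to commute $i_{\overline x}^*$ past $\bR\pi_*$, which is a filtered-colimit/inverse-limit interchange that fails in general (the stalk of $\r R^q\pi_*$ is $\colim_U\lim_n$, not $\lim_n\colim_U$). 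The $\ge$-side involves $i_{\overline x}^!$, which is worse. Separately, your claim that $\TS{\tp}{\tau}{\le n}\sK'$ has almost adic cohomology---needed to invoke Lemma~\ref{3le:lambda_complex}(2)---is asserted, not argued: admissibility of the pair and the interleaving of the two $t$-structures give boundedness information, not almost adicity of the cohomology of a perverse truncation. Making either of these points precise seems to require exactly a stratification/Noetherian-induction argument, at which point the paper's direct route is shorter.
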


\begin{proof}
By Lemma \ref{3le:adic_perverse_t_indep}, we may assume that $X$ is a quasi-compact, separated (and excellent, finite dimensional) scheme that is $(\Lambda/\fm)$-bounded, and $\sfp=p$ is an admissible perversity function on $X$. In particular, $p$ is bounded. We prove by Noetherian induction. We may further assume $X$ is irreducible. For a complex $\sfK\in\cD(X,\Lambda_{\bullet})_{\ra,\rc}$, we may assume $\sfK\in\cD^{\rb}(X,\Lambda_{\bullet})_{\ra,\rc}\subseteq\cD^{\rb}(X,\Lambda_{\bullet})$ by Remark \ref{3re:perverse_truncation} (3). Choose
a dense open subset $U$ of $X$ such that
\begin{itemize}
  \item $U$ is essentially smooth;

  \item $p(x)\leq p(\eta)+\codim(x,X)$ for $x\in|U|$, where $\eta$ is the unique generic point of $X$;

  \item $p(x)\geq p(\eta)$ for $x\in |U|$;

  \item the complex $\sfK_U\coloneqq\sfK\res U$, viewed as an element of $\cD^{\rb}(U,\Lambda_{\bullet})$, has smooth almost adic cohomology sheaves.
\end{itemize}
Then the perverse truncation for $\sfK_U$ is simply the usual truncation (up to a shift by $p(\eta)$), which preserves constructibility by Corollary \ref{2co:constructible_adic_t}.
\end{proof}

Our definition of the constructible adic perverse t-structure coincides with Laszlo--Olsson \cite{LO3} under their restrictions, where in particular $X$ is a locally Noetherian ($1$-)Artin stack over a field $k$ (that is, $\dS=\Spec k$) with $\r{cd}_{\ell}(k)<\infty$, and $\sfp$ is the middle perversity smooth evaluation, that is, the unique perverse smooth evaluation such that for every atlas $u\colon X_0\to X$ with $X_0$ a scheme in $\Schqcs$, we have $\sfp_u=(f\circ u)_{\b{1}}^*p_0$, where $f\colon X\to\dS$ is the structure morphism and $p_0$ is the zero perverse function on $\dS=\Spec k$.

\section{Hyperdescent properties}
\label{4}

In this chapter, we study hyperdescent properties for certain operations on stacks. In \Sec\ref{4ss:hyperdescent}, we study some general facts for hyperdescent. In \Sec\ref{4ss:smooth}, \Sec\ref{4ss:proper} and \Sec\ref{4ss:flat}, we study smooth, proper and flat hyperdescent, respectively.

\subsection{Hyperdescent}
\label{4ss:hyperdescent}

In this section, we study hyperdescent properties in the general setup.

\begin{definition}
Let $\cC$, $\cD$ be $\infty$-categories, let $F\colon\cC^{op}\to \cD$ be a functor, and let $X_\bullet^+\colon\rN(\del_+)^{op}\to\cC$ be an augmented simplicial object of $\cC$.
\begin{enumerate}
  \item We say that $X_\bullet^+$ is an \emph{augmentation of $F$-descent} if $F\circ(X_\bullet^+)^{op}$ is a limit diagram in $\cD$.

  \item Assume that $\cC$ admits pullbacks. We say that $X_\bullet^+$ is a \emph{hypercovering for universal $F$-descent} if $X^+_q\to(\cosk_{q-1}(X^+_{\bullet}/X^+_{-1}))_q$ is a morphism of universal $F$-descent for all $q\ge 0$.
\end{enumerate}
\end{definition}

By definition, a morphism of $\cC$ is of $F$-descent \cite{LZ1}*{3.1.1} if and only if its \v{C}ech nerve is an augmentation of $F$-descent. We now give several criteria for $(2)\Rightarrow(1)$.

\begin{proposition}\label{4pr:n_category}
Let $\cC$ be an $\infty$-category admitting pullbacks, let $\cD$ be an $n$-category admitting finite limits for an integer $n\ge 0$, and let $F\colon\cC^{op}\to\cD$ be a functor. Then every hypercovering $X_\bullet^+$ for universal $F$-descent is an augmentation of $F$-descent.
\end{proposition}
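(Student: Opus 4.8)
The plan is to reduce the statement to the standard fact that, for an $n$-category target, a limit diagram over $\N(\del_+)^{op}$ can be detected on the $n$-skeleton, i.e.\ that $F\circ(X^+_\bullet)^{op}$ is a limit diagram if and only if its restriction to $\N(\del_{\le n+1})^{op}$ (the truncated cosimplicial diagram) exhibits $F(X^+_{-1})$ as the limit. Concretely, since $\cD$ is an $n$-category, a functor $\N(\del)^{op}\to \cD_{F(X^+_{-1})/}$ (equivalently, the question of whether $F(X^+_{-1})\to \lim F\circ(X^+_\bullet)^{op}$ is an equivalence) only depends on the restriction to $\N(\del_{\le n})^{op}$, by the cofinality/coskeleton arguments of \cite{Lu1}*{\Sec 6.5.3} together with the fact that the limit of an $n$-truncated cosimplicial object in an $n$-category agrees with the limit of its $(n+1)$-coskeleton. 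So first I would invoke the relevant statement from \cite{Lu1} (the analogue of \cite{Lu1}*{6.5.3.10} or the hypercompleteness/truncation results in \Sec 6.5.3) to say: it suffices to prove that $X^+_\bullet$ is an augmentation of $F$-descent \emph{after} passing to coskeleta in bounded degree.

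Next I would use the hypercovering hypothesis. Let $Y^+_\bullet = \cosk_n(X^+_\bullet/X^+_{-1})$, an augmented simplicial object of $\cC$ (using that $\cC$ admits pullbacks, hence coskeleta). The hypercovering condition says each $X^+_q\to (\cosk_{q-1}(X^+_\bullet/X^+_{-1}))_q$ is of universal $F$-descent; I want to leverage this to compare $F\circ(X^+_\bullet)^{op}$ with $F\circ(Y^+_\bullet)^{op}$ on the level of limits. The key intermediate claim is: if $X^+_q\to(\cosk_{q-1}(X^+_\bullet/X^+_{-1}))_q$ is of universal $F$-descent for all $q$, then for every $m$ the canonical map $\lim_{\del_{\le m}} F\circ(X^+_\bullet)^{op} \to \lim_{\del_{\le m}} F\circ((\cosk_m(X^+_\bullet/X^+_{-1}))_\bullet)^{op}$ — more precisely the comparison between the limit over the truncated category and the limit over the full category — is an equivalence. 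This is essentially the inductive argument that a hypercovering is a "refinement" of a Čech-type cover degree by degree: one filters $\del_{\le m}$ and repeatedly applies that universal $F$-descent of a single map propagates through base change and through the latching/matching object formalism. I would organize this as an induction on $m$, at each stage using that $F$ applied to the Čech nerve of the universal-$F$-descent morphism $X^+_m\to M_m$ (where $M_m$ is the appropriate matching object, a finite limit in $\cC$) is a limit diagram, and that $F$ preserves the relevant finite limits appearing in the matching objects by the universality clause.

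Finally I would assemble these: by the first step it is enough to check the $F$-descent condition in bounded cosimplicial degree $\le n$ (or $\le n+1$); by the second step, in that bounded range $F\circ(X^+_\bullet)^{op}$ has the same limit as $F\circ(Z^+_\bullet)^{op}$ where $Z^+_\bullet$ is obtained from $X^+_\bullet$ by replacing it with a Čech nerve in each degree (an iterated coskeleton built from genuine $F$-descent morphisms), and the latter is a limit diagram because Čech nerves of $F$-descent morphisms are augmentations of $F$-descent and $n$-categorical limits are stable under the bookkeeping. Hence $F(X^+_{-1})\xrightarrow{\ \sim\ }\lim F\circ(X^+_\bullet)^{op}$, which is the assertion. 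I expect the main obstacle to be the second step: making precise, and correct, the degreewise inductive comparison between the limit over a hypercovering and the limit over an associated coskeletal/Čech object, keeping careful track of which finite limits in $\cC$ must be preserved by $F$ — this is exactly where the word "universal" in "universal $F$-descent" is doing the work, and where one must be careful that the matching objects $(\cosk_{q-1}(X^+_\bullet/X^+_{-1}))_q$ are built from the kind of finite limits that universality controls. Once that degreewise statement is in hand, the $n$-categorical truncation argument of \cite{Lu1}*{\Sec 6.5.3} closes the proof cleanly.
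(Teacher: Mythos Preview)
Your two-step strategy---reduce to a bounded coskeleton using that $\cD$ is an $n$-category, then show that a coskeletal hypercovering for universal $F$-descent is an augmentation of $F$-descent---is exactly the paper's approach (Lemma~\ref{4le:n_category} dualized, followed by Lemma~\ref{4le:coskeletal}).

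One point to sharpen: your ``second step'' is the real content, and the paper does \emph{not} prove it by inducting on partial limits over $\N(\del_{\le m})$ as you sketch. Instead it proves an inductive lemma (Lemma~\ref{4le:induction}) comparing $\cosk_m$ to $\cosk_{m-1}$ by forming the \v{C}ech nerve of the comparison map as a \emph{bisimplicial} object, using $F$-descent along one axis, and then invoking siftedness of $\N(\del)^{op}$ to pass to the diagonal; a retract argument from \cite{Lu1}*{6.5.3.9} finishes it. Your phrase ``$F$ preserves the relevant finite limits appearing in the matching objects by the universality clause'' is also a slip: universal $F$-descent says nothing about $F$ preserving limits---it only guarantees that each pullback of the given morphism is again of $F$-descent, which is precisely what one needs so that every row of the bisimplicial \v{C}ech nerve is an $F$-limit diagram. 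If you attempt a direct induction on $\lim_{\del_{\le m}}$ without the bisimplicial trick you will find the step from $m-1$ to $m$ awkward to close; the diagonal argument is the clean mechanism.
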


To prove Proposition \ref{4pr:n_category}, we need a few lemmas.

\begin{lem}\label{4le:induction}
Let $\cC$, $\cD$ be $\infty$-categories such that $\cC$ admits finite limits, let $F\colon\cC^{op}\to\cD$ be a functor, and let $e$ be a final object of $\cC$. Let $f_\bullet\colon U_\bullet\to V_\bullet$ be a morphism of simplicial objects of $\cC$ such that $V_\bullet \to e$ is an augmentation of $F$-descent and $f_q$ is a morphism of $F$-descent for all $q$. Assume that there exists an integer $n\ge 0$ such that $U_\bullet$ is $n$-coskeletal, $V_\bullet$ is $(n-1)$-coskeletal, and $f_q$ is an equivalence for $q<n$. Then $U_\bullet\to e$ is an augmentation of $F$-descent.
\end{lem}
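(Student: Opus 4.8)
The plan is to prove Lemma~\ref{4le:induction} by induction on $n$, using
the coskeleton filtration to reduce the general case to the case of
Čech nerves, where the $F$-descent hypothesis applies directly.

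First I would treat the base case $n=0$. Here $U_\bullet$ is
$0$-coskeletal, hence the Čech nerve of the morphism $U_0\to e$ (since a
$0$-coskeletal simplicial object is the Čech nerve of its augmentation to
the final object), and similarly $V_\bullet$ is $(-1)$-coskeletal, so
$V_\bullet$ is the constant simplicial object at $e$, i.e.\ the
augmentation $V_\bullet\to e$ is the identity Čech nerve. The hypothesis
that $f_0\colon U_0\to V_0=e$ is a morphism of $F$-descent says precisely
that the Čech nerve of $U_0\to e$ is an augmentation of $F$-descent, which
is the desired conclusion.

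For the inductive step, suppose $n\ge 1$ and the statement holds for
$n-1$. I would compare $U_\bullet$ with $W_\bullet=\cosk_{n-1}(U_\bullet)$
and note that, since $f_q$ is an equivalence for $q<n$, the induced map
$W_\bullet\to V_\bullet$ satisfies the hypotheses of the lemma for the
index $n-1$: indeed $W_\bullet$ is $(n-1)$-coskeletal, $V_\bullet$ is
$(n-1)$-coskeletal (a fortiori $(n-2)$-coskeletal is what the lemma wants,
but one checks the coskeletality hypotheses degree by degree), the map
$W_q\to V_q$ is an equivalence for $q<n-1$ and a morphism of $F$-descent
in general (being built from the $f_q$, $q<n$, via finite limits, and
morphisms of $F$-descent are closed under pullback and composition by
\cite{LZ}*{3.1.1}), so the inductive hypothesis gives that $W_\bullet\to
e$ is an augmentation of $F$-descent. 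It then remains to compare
$U_\bullet$ with $W_\bullet$. The natural map $U_\bullet\to W_\bullet$ is
an equivalence in degrees $<n$, and in degree $n$ it is the map
$U_n\to (\cosk_{n-1}(U_\bullet))_n$, which — since $U_\bullet$ is
$n$-coskeletal, so that $U_\bullet$ is the Čech nerve \emph{relative to}
its $(n-1)$-skeleton in an appropriate sense — one identifies as (a base
change of) the given morphism $U_n\to(\cosk_{n-1}(U_\bullet))_n$; here is
where I would invoke that the latter is a morphism of $F$-descent. More
precisely, I would realize $U_\bullet$ as a limit of a tower whose
successive fibers are controlled by the degreewise maps, and use that
$F\circ U_\bullet^{op}$ and $F\circ W_\bullet^{op}$ have the same limit
because the comparison $F(W_q)\to F(U_q)$ assembles into an equivalence of
the relevant limit diagrams — the key input being that attaching
$n$-cells via an $F$-descent morphism does not change the limit of
$F$ applied to the coskeletal tower, which is a standard
cofinality/Reedy-type argument (cf.\ \cite{Lu1}*{6.5.3}).

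The main obstacle I expect is the bookkeeping in the last step: making
precise the sense in which an $n$-coskeletal simplicial object is
"built from its $(n-1)$-coskeleton by a single $F$-descent morphism in
degree $n$", and checking that $F$ applied to the two augmented simplicial
objects $U_\bullet^+$ and $W_\bullet^+$ have canonically equivalent limits.
This is where the $n$-categorical or coskeletality hypotheses must be used
carefully — in the full generality of $\infty$-categories one would need a
hypercompleteness assumption, but here the finite coskeletality of
$U_\bullet$ and $V_\bullet$ makes the relevant limits finite, so the
argument goes through. I would organize this via the skeletal filtration
$\mathrm{sk}_{n-1}U_\bullet \to U_\bullet$ and a Bousfield--Kan style
description of the totalization, reducing the equivalence of limits to the
statement that the fiber of $F(W_\bullet^+)\to F(U_\bullet^+)$ over the
$n$-th stage vanishes, which is exactly the $F$-descent of
$U_n\to(\cosk_{n-1}(U_\bullet))_n$.
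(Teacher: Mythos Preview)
Your inductive scheme collapses. Because $f_q$ is an equivalence for $q<n$, the map $\cosk_{n-1}(U_\bullet)\to\cosk_{n-1}(V_\bullet)=V_\bullet$ is already an equivalence, so your intermediate object $W_\bullet$ is equivalent to $V_\bullet$. Step~A is therefore trivial (it just restates the hypothesis that $V_\bullet\to e$ is of $F$-descent) and never uses the inductive hypothesis for $n-1$, while Step~B --- comparing $U_\bullet$ with $W_\bullet\simeq V_\bullet$ --- is literally the original statement you set out to prove. So the induction does no work.

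The vague appeal to a ``cofinality/Reedy-type argument'' in Step~B does not close the gap. The functor $F\colon\cC^{op}\to\cD$ is arbitrary: it need not preserve any limits, so coskeletality of $U_\bullet$ in $\cC$ gives no control on $F\circ U_\bullet^{op}$, and there is no general Bousfield--Kan tower for $\lim_{\del} F(U_q)$ whose layers are governed by the individual maps $F(W_q)\to F(U_q)$. Saying that $U_n\to W_n$ is a morphism of $F$-descent tells you that $F(W_n)\simeq\lim_p F(\check{C}(U_n/W_n)_p)$, but it does not by itself let you replace $F(U_n)$ by $F(W_n)$ inside the cosimplicial limit.

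The paper's argument is entirely different and does not proceed by induction on $n$. It forms the bisimplicial \v Cech nerve $W_+$ of the morphism $f_\bullet$; each column is the \v Cech nerve of $f_q$, so the column-wise limits recover $F(V_q)$, and since $\N(\del)^{op}$ is sifted the double limit agrees with the limit along the diagonal $D_\bullet$. The specific coskeletality hypotheses then enter only to invoke (the proof of) \cite{Lu1}*{6.5.3.9}, which exhibits $U_\bullet\res\N(\del_s)^{op}$ as a \emph{retract} of $D_\bullet\res\N(\del_s)^{op}$; hence $\lim F(U_\bullet)$ is a retract of $\lim F(D_\bullet)\simeq F(e)$, and therefore equals $F(e)$. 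If you want to salvage your approach, the missing idea is precisely this retract: your ``standard'' argument has to produce a splitting, not just a comparison map.
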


\begin{proof}
With out lost of generality, we may assume that $F(e)$ is an initial object of $\cD$. Let $W_+\colon\rN(\del_+\times\del)^{op}\to\Fun(\Delta^1,\cC)$ be a \v{C}ech nerve of $f_\bullet$, and put $W\coloneqq W_+\res\rN(\del\times\del)^{op}$. For every $q\ge 0$, $W_+\res\rN(\del_+\times\{[q]\})^{op}$ is a \v{C}ech nerve of $f_q$, which is a morphism of $F$-descent by assumption. It follows that $F\circ W_+^{op}\res\rN(\del_+\times\{[q]\})$ is a limit diagram. We may thus identify the limit of $F\circ W^{op}$ with the limit $F\circ W_+^{op}\res\rN(\{[-1]\}\times\del_s)$. Since $W_+\res\rN(\{[-1]\}\times\del)^{op}$ can be identified with $V_\bullet$, the limit of $F\circ W^{op}$ can be identified with $F(e)$. Put $D_\bullet\coloneqq W\circ \delta$, where $\delta\colon\rN(\del)^{op}\to\rN(\del\times\del)^{op}$ is the diagonal map. Since $\rN(\del)^{op}$ is sifted \cite{Lu1}*{5.5.8.4}, the limit of $F\circ D_\bullet^{op}$ can be identified with $F(e)$. The proof of \cite{Lu1}*{6.5.3.9} exhibits $U_\bullet\res\rN(\del_s)^{op}$ as a retract of $D_\bullet\res\rN(\del_s)^{op}$. It follows that the limit of $F\circ U_\bullet^{op}$ is a retract of $F(e)$, hence is $F(e)$. The lemma follows.
\end{proof}

\begin{lem}\label{4le:coskeletal}
Let $\cC$, $\cD$ be $\infty$-categories such that $\cC$ admits pullbacks, let $F\colon\cC^{op}\to\cD$ be a functor, and let $X_\bullet^+$ be an
$n$-coskeletal hypercovering for universal $F$-descent for an integer $n\ge-1$. Then $X_\bullet^+$ is an augmentation of $F$-descent.
\end{lem}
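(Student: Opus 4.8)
The plan is to induct on $n$ and feed the induction through Lemma~\ref{4le:induction}. First I would pass to the slice category $\cC_{/X^+_{-1}}$ and replace $F$ by its composite with the forgetful functor $\cC_{/X^+_{-1}}\to\cC$. Since $\cC$ admits pullbacks, $\cC_{/X^+_{-1}}$ admits pullbacks and a final object, hence all finite limits; moreover pullback squares and \v Cech nerves in $\cC_{/X^+_{-1}}$ are computed in $\cC$, and $(\cC_{/X^+_{-1}})_{/\bar V}\simeq\cC_{/V}$, so a morphism of $\cC_{/X^+_{-1}}$ is of $F$-descent (resp.\ universal $F$-descent) if and only if the underlying morphism of $\cC$ is. Thus we may assume $X^+_{-1}$ is a final object of $\cC$ and $\cC$ admits finite limits, so that coskeleta exist and may be manipulated freely. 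The base case $n=-1$ is then immediate: an $(-1)$-coskeletal augmented simplicial object is constant at $X^+_{-1}$, so $F\circ(X^+_\bullet)^{op}$ is a constant diagram $\N(\del_+)^{op}\to\cD$, and since $\N(\del)^{op}$ is weakly contractible this is a limit diagram.

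For the inductive step with $n\ge 0$, set $Z^+_\bullet=\cosk_{n-1}(X^+_\bullet/X^+_{-1})$, which is $(n-1)$-coskeletal with $Z^+_{-1}=X^+_{-1}$. I would first check that $Z^+_\bullet$ is again an $(n-1)$-coskeletal hypercovering for universal $F$-descent: for $q\ge n$ the map $Z_q\to(\cosk_{q-1}(Z^+_\bullet/Z^+_{-1}))_q$ is an equivalence because $Z^+_\bullet$ is $(n-1)$-coskeletal, while for $0\le q\le n-1$ both $Z_q$ and $(\cosk_{q-1}(Z^+_\bullet/Z^+_{-1}))_q$ agree with the corresponding objects formed from $X^+_\bullet$ (coskeleta agree below the truncation degree), so this map is identified with $X_q\to(\cosk_{q-1}(X^+_\bullet/X^+_{-1}))_q$, which is of universal $F$-descent by hypothesis. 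By the inductive hypothesis, $Z^+_\bullet$ is then an augmentation of $F$-descent.

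Next, consider the canonical map $f_\bullet\colon X_\bullet\to Z_\bullet$ of simplicial objects, which is an equivalence in degrees $<n$. I claim $f_q$ is a morphism of $F$-descent for every $q$; the only case is $q\ge n$, where $f_q$ factors through the coskeleton tower
\[X_q\to(\cosk_{q-1}(X^+_\bullet/X^+_{-1}))_q\to(\cosk_{q-2}(X^+_\bullet/X^+_{-1}))_q\to\cdots\to(\cosk_{n-1}(X^+_\bullet/X^+_{-1}))_q=Z_q.\]
The first arrow is of universal $F$-descent by hypothesis, and for $n\le k<q$ each arrow $(\cosk_k(X^+_\bullet/X^+_{-1}))_q\to(\cosk_{k-1}(X^+_\bullet/X^+_{-1}))_q$ is a base change of a finite product of copies of the matching map $X_k\to(\cosk_{k-1}(X^+_\bullet/X^+_{-1}))_k$ — one copy for each nondegenerate $k$-simplex of $\Delta^q$ — which is again of universal $F$-descent, being exactly the degree-$k$ instance of the hypercovering condition. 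Since morphisms of universal $F$-descent are stable under pullback, finite products and composition, and compose with morphisms of $F$-descent to give morphisms of $F$-descent (\cite{LZ}*{\S3.1}), $f_q$ is of $F$-descent. Now apply Lemma~\ref{4le:induction} with $e=X^+_{-1}$, $U_\bullet=X_\bullet$, $V_\bullet=Z_\bullet$, and $f_\bullet$ as above: $U_\bullet$ is $n$-coskeletal, $V_\bullet$ is $(n-1)$-coskeletal, $f_q$ is an equivalence for $q<n$ and a morphism of $F$-descent in general, and $V_\bullet\to e$ is an augmentation of $F$-descent. We conclude that $X^+_\bullet$ is an augmentation of $F$-descent.

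The step I expect to be the main obstacle is the combinatorial identification used for $f_q$: presenting the coskeleton transition maps $(\cosk_k(X^+_\bullet/X^+_{-1}))_q\to(\cosk_{k-1}(X^+_\bullet/X^+_{-1}))_q$ as base changes of finite products of the single matching map $X_k\to(\cosk_{k-1}(X^+_\bullet/X^+_{-1}))_k$ — the dual of the standard presentation of skeleta by successive cell attachments — and checking that this matching map is literally the degree-$k$ case of the hypercovering hypothesis. Everything else is formal bookkeeping with the stability properties of $F$-descent and universal $F$-descent morphisms and the elementary behavior of coskeleta below their truncation degree.
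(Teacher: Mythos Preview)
Your proposal is correct and follows essentially the same approach as the paper: both arguments climb the coskeleton tower one step at a time and invoke Lemma~\ref{4le:induction} at each step, the point being that the transition $\cosk_m(X^+_\bullet/X^+_{-1})\to\cosk_{m-1}(X^+_\bullet/X^+_{-1})$ is levelwise built from the degree-$m$ matching map by pullbacks and finite products, hence is levelwise of universal $F$-descent. The paper phrases this as an internal induction on $m$ for the fixed $X^+_\bullet$ and compresses the combinatorics into the single remark that morphisms of universal $F$-descent are stable under pullback and composition; you phrase it as an induction on $n$ in the statement and spell out the slice reduction and the coskeleton combinatorics, but the content is the same. (Your tower for $f_q$ is slightly longer than necessary: since $X^+_\bullet$ is already $n$-coskeletal, the maps $(\cosk_k)_q\to(\cosk_{k-1})_q$ for $k>n$ are equivalences, so only the step $k=n$ carries weight.)
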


\begin{proof}
Since morphisms of universal $F$-descent are stable under pullbacks and compositions, the morphism $\cosk_m(X_\bullet^+/X_{-1}^+)\to\cosk_{m-1}(X_\bullet^+/X_{-1}^+)$ satisfies the assumptions of Lemma \ref{4le:induction}. It follows by induction that $\cosk_n(X^+_\bullet/X^+_{-1})$ is an augmentation of $F$-descent.
\end{proof}

\begin{lem}\label{4le:n_category}
Let $n\ge -1$ be an integer, let $\cD$ be an $n$-category admitting finite colimits, and let $f_\bullet \colon Y_\bullet \to X_\bullet$ be a morphism of semisimplicial (resp.\ simplicial) objects of $\cD$ such that $Y_q\to X_q$ is an equivalence for $q\le n$. Then the induced morphism between geometric realizations $|f_\bullet|\colon|Y_\bullet|\to|X_\bullet|$ is an equivalence in $\cD$.
\end{lem}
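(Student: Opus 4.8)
The plan is to compare the geometric realization with its truncation to $\del_{\leq n}$ (resp.\ the semisimplex category $\del_{s,\leq n}$), using that all mapping spaces in an $n$-category are $(n-1)$-truncated. Recall the skeletal filtration of a (semi)simplicial object $X_\bullet$ of an $\infty$-category $\cD$ with finite colimits: write $\lvert\mathrm{sk}_m X_\bullet\rvert:=\colim_{\del_{\leq m}^{op}}X_\bullet$ (resp.\ $\colim_{\del_{s,\leq m}^{op}}X_\bullet$), which exists because $\del_{\leq m}$ (resp.\ $\del_{s,\leq m}$) is a finite category. By the standard cellular description of skeleta (see \cite{Lu1}*{6.5.3} and \cite{Lu2}*{1.2.4}), for each $m\geq 1$ there is a pushout square
\[\xymatrix{
P_m \ar[r]^-{\psi_m}\ar[d] & X_m \ar[d] \\
\lvert\mathrm{sk}_{m-1}X_\bullet\rvert \ar[r] & \lvert\mathrm{sk}_m X_\bullet\rvert,
}\]
where, using $\lvert\Delta^m\rvert\simeq\ast$ and $\lvert\partial\Delta^m\rvert\simeq S^{m-1}$, one has $P_m\simeq L_m X_\bullet\sqcup_{S^{m-1}\otimes L_m X_\bullet}(S^{m-1}\otimes X_m)$, the map $S^{m-1}\otimes L_m X_\bullet\to L_m X_\bullet$ being the collapse and $S^{m-1}\otimes L_m X_\bullet\to S^{m-1}\otimes X_m$ the $(m-1)$-th latching map tensored with $S^{m-1}$; here $L_m X_\bullet$ is the $m$-th latching object (the initial object in the semisimplicial case) and $S^{m-1}\otimes(-)$ is the copower by $S^{m-1}=\lvert\partial\Delta^m\rvert$, again computable as a finite colimit. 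The map $\psi_m$ restricts to the collapse $S^{m-1}\otimes X_m\to X_m$ and to the latching map $L_m X_\bullet\to X_m$.

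The key input is the following claim: if $Z$ is an object of the $n$-category $\cD$ and $k\geq n$, then the collapse map $S^k\otimes Z\to\ast\otimes Z=Z$ is an equivalence in $\cD$. Indeed, for every $W\in\cD$ this map induces the constant-maps inclusion $\Map_{\cD}(Z,W)\to\Map(S^k,\Map_{\cD}(Z,W))$; since $\cD$ is an $n$-category the space $A:=\Map_{\cD}(Z,W)$ is $(n-1)$-truncated, and the evaluation fibration $\Map(S^k,A)\to A$ has fibre $\Omega^k A$, whose homotopy groups $\pi_i(\Omega^k A)=\pi_{i+k}(A)$ vanish for all $i\geq 0$ once $k\geq n$, so $\Omega^k A$ is contractible and the constant section $A\to\Map(S^k,A)$ is an equivalence (the case $A=\varnothing$ being trivial); the claim follows by the Yoneda lemma. (For $n=-1$, $\cD$ is trivial and the lemma is vacuous.) Applying this with $k=m-1$ for $m>n$, both $S^{m-1}\otimes X_m\to X_m$ and $S^{m-1}\otimes L_m X_\bullet\to L_m X_\bullet$ are equivalences; hence $P_m$ is identified with $S^{m-1}\otimes X_m$ and, under this identification, $\psi_m$ becomes the collapse $S^{m-1}\otimes X_m\to X_m$, so $\psi_m$ is an equivalence. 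Therefore $\lvert\mathrm{sk}_{m-1}X_\bullet\rvert\to\lvert\mathrm{sk}_m X_\bullet\rvert$ is an equivalence for every $m>n$, the tower $(\lvert\mathrm{sk}_m X_\bullet\rvert)_{m}$ is essentially constant past stage $n$, so $\colim_{\del^{op}}X_\bullet$ (resp.\ $\colim_{\del_s^{op}}X_\bullet$) exists and the canonical map $\lvert\mathrm{sk}_n X_\bullet\rvert=\colim_{\del_{\leq n}^{op}}X_\bullet\to\lvert X_\bullet\rvert$ is an equivalence; likewise for $Y_\bullet$.

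To conclude: since $f_q\colon Y_q\to X_q$ is an equivalence for $q\leq n$, the restriction of $f_\bullet$ to $\del_{\leq n}^{op}$ (resp.\ $\del_{s,\leq n}^{op}$) is an equivalence of diagrams, hence induces an equivalence $\lvert\mathrm{sk}_n Y_\bullet\rvert\xrightarrow{\ \sim\ }\lvert\mathrm{sk}_n X_\bullet\rvert$; combining this with the two equivalences of the previous paragraph identifies $\lvert f_\bullet\rvert$ with it, so $\lvert f_\bullet\rvert$ is an equivalence. I expect the main obstacle to be the bookkeeping around the cellular pushout square $P_m\to X_m$ in this generality — in particular verifying that the latching objects, the copowers by $\partial\Delta^m$, and the $\del_{\leq m}^{op}$-colimits are all genuinely finite colimits, so that the hypothesis of finite colimits suffices and $\lvert X_\bullet\rvert$ indeed exists — rather than the homotopy-theoretic content, which is the elementary truncation computation in the second paragraph.
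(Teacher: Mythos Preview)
Your argument is correct and amounts to a direct proof of the fact the paper cites from \cite{Lu2}*{1.3.3.10}, namely that in an $n$-category the geometric realization of a (semi)simplicial object is already computed by its restriction to $\N(\del_{\le n})^{op}$; the skeletal filtration together with the observation that $S^{k}\otimes(-)$ is trivial for $k\ge n$ is a standard route to this. The only difference is organizational: the paper reduces the semisimplicial case to the simplicial one via left Kan extension along $\N(\del_s)^{op}\hookrightarrow\N(\del)^{op}$, whereas you treat both cases in parallel by noting that latching objects are initial in the semisimplicial case.
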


\begin{proof}
The existence of the geometric realizations is guaranteed by \cite{Lu2}*{1.3.3.10}. The semisimplicial case follows from the simplicial case by taking left Kan extensions. The simplicial case follows from the proof of \cite{Lu2}*{1.3.3.10}.
\end{proof}

\begin{proof}[Proof of Proposition \ref{4pr:n_category}]
It suffices to apply the dual version of Lemma \ref{4le:n_category} to the morphism $h\colon X^+_\bullet \to \cosk_n (X^+_\bullet/X_{-1}^+)$ and Lemma \ref{4le:coskeletal}.
\end{proof}

The following proposition can be used to deduce Gabber's hyper base change theorem \cite{Org}*{Th\'eor\`eme 2.2.5} (see \cite{Zhhyper}*{Remark 2.3}).

\begin{proposition}\label{4pr:t_descent}
Let $\cC$ be an $\infty$-category admitting pullbacks, let $\cD$ be a stable $\infty$-category endowed with a weakly right complete t-structure that either admits countable limits or is right complete, let $F\colon\cC^{op}\to\cD$ be a functor, and let $X_\bullet^+\colon\rN(\del_+)^{op}\to\cC$ be a hypercovering for universal $F$-descent such that $F\circ(X_\bullet^+)^{op}$ factorizes through $\cD^{\ge 0}$. Then $X_\bullet^+$ is an augmentation of $F$-descent.
\end{proposition}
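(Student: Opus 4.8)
The plan is to reduce the unbounded-below statement to the coskeletal case already handled in Lemma~\ref{4le:coskeletal}, using the weak right completeness of the $t$-structure to approximate $\cD$ by its truncations $\cD^{\le n}$, each of which is an $n$-category. First I would fix notation: write $G=F\circ (X_\bullet^+)^{op}\colon \N(\del_+)^{op}\to \cD$, which by hypothesis lands in $\cD^{\ge 0}$; we must show $G$ is a limit diagram, i.e.\ that the canonical map $F(X_{-1}^+)\to \lim_{\del} F\circ (X_\bullet)^{op}$ is an equivalence. For each integer $n\ge 0$, the truncation functor $\tau^{\le n}\colon \cD^{\ge 0}\to (\cD^{\ge 0})^{\le n}$ has target an $n$-category (this is \cite{Lu2}*{1.2.1.11} together with the fact that the heart-shifted truncations of a stable $\infty$-category with $t$-structure form an $n$-category). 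Composing, $\tau^{\le n}\circ G$ takes values in an $n$-category admitting finite limits, and $X_\bullet^+$ is still a hypercovering for universal $F$-descent, so $\tau^{\le n}\circ \bigl(F\circ(X_\bullet^+)^{op}\bigr)$ need not \emph{a priori} be of the form $F'\circ(X_\bullet^+)^{op}$ for an honest $F'$; here the key point is that the \emph{universal} $F$-descent hypothesis passes to $\tau^{\le n}\circ F$, because universal $F$-descent for a morphism $u$ means that every base change of $u$ is of $F$-descent, hence (applying $\tau^{\le n}$, which preserves limits on $\cD^{\ge0}$ since it is a right adjoint there) of $(\tau^{\le n}\circ F)$-descent. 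Thus Proposition~\ref{4pr:n_category} applies to the functor $\tau^{\le n}\circ F\colon \cC^{op}\to (\cD^{\ge 0})^{\le n}$ and yields that $\tau^{\le n}\circ G$ is a limit diagram for every $n$.

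Next I would assemble these truncated limit diagrams. Let $L=\lim_{\del} F\circ(X_\bullet)^{op}$ and let $c\colon F(X_{-1}^+)\to L$ be the canonical comparison map; we want $c$ to be an equivalence. Since $\tau^{\le n}$ commutes with limits on $\cD^{\ge 0}$, we have $\tau^{\le n}L\simeq \lim_{\del}\tau^{\le n}F\circ(X_\bullet)^{op}$, and the previous paragraph identifies this with $\tau^{\le n}F(X_{-1}^+)$ compatibly with $\tau^{\le n}(c)$; hence $\tau^{\le n}(c)$ is an equivalence for all $n\ge 0$. Now distinguish the two hypotheses. If $\cD$ is right complete, then a morphism in $\cD^{\ge 0}$ that becomes an equivalence after every $\tau^{\le n}$ is an equivalence, since the cofiber lies in $\bigcap_n \cD^{\ge n+1}=\cD^{\ge \infty}$, which consists of zero objects by weak right completeness; this finishes the argument. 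If instead $\cD$ merely admits countable limits (but the $t$-structure is weakly right complete), one argues slightly more carefully: both $F(X_{-1}^+)$ and $L$ lie in $\cD^{\ge 0}$, and the cofiber $\mathrm{cofib}(c)$ again lies in $\bigcap_n \cD^{\ge n}$ because $\tau^{\le n-1}\mathrm{cofib}(c)\simeq \mathrm{cofib}(\tau^{\le n-1}c)\simeq 0$ (the truncation functor $\tau^{\le n-1}$ is exact up to the connective shift, so it sends the cofiber sequence of $c$ to a cofiber sequence); by weak right completeness $\mathrm{cofib}(c)=0$, so $c$ is an equivalence.

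The step I expect to be the main obstacle is the verification that \emph{universal} $F$-descent is inherited by $\tau^{\le n}\circ F$ and that the hypercovering condition for universal $F$-descent transfers, so that Proposition~\ref{4pr:n_category} genuinely applies to $\tau^{\le n}\circ F$ at the level of the coskeletal approximations; concretely, one must check that for each $q\ge 0$ the morphism $X_q^+\to (\cosk_{q-1}(X_\bullet^+/X_{-1}^+))_q$, being of universal $F$-descent, remains of universal $(\tau^{\le n}\circ F)$-descent, which requires knowing that $\tau^{\le n}$ (restricted to $\cD^{\ge 0}$) preserves the relevant \v{C}ech-type limits computing descent. This is where weak right completeness is used a second time, via the right-adjointness of $\tau^{\le n}\colon \cD^{\ge 0}\to (\cD^{\ge 0})^{\le n}$ (left adjoint to the inclusion, so it is the \emph{co}reflection and one must instead use that the inclusion $(\cD^{\ge 0})^{\le n}\hookrightarrow \cD^{\ge 0}$ preserves the limits in question, or dually compute in $\cD^{\le n}$ and shift). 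Once that bookkeeping is pinned down, the rest is the completeness squeeze described above; everything else is formal.
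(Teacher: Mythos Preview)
Your strategy is correct and gives a valid proof, though the route differs from the paper's. The paper does not truncate the target; instead, for each $n\ge 0$ it compares $X^+_\bullet$ directly with its relative $n$-coskeleton $Y^+_\bullet=\cosk_n(X^+_\bullet/X^+_{-1})$, which is an augmentation of $F$-descent by Lemma~\ref{4le:coskeletal}, and uses the first-quadrant spectral sequence of \cite{Lu2}*{1.2.4.4, 1.2.4.5} to show that the induced map between $\lim_{\del}F(X_\bullet)$ and $\lim_{\del}F(Y_\bullet)\simeq F(X^+_{-1})$ becomes an equivalence after $\tau^{\le n-1}$; weak right completeness then finishes. Your approach packages the same coskeletal reduction into Proposition~\ref{4pr:n_category} and avoids the explicit spectral sequence, at the price of checking that universal $F$-descent transfers along $\tau^{\le n}$.

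On that transfer, your hesitation in the final paragraph is unwarranted: in the cohomological conventions used here the inclusion $\cD^{\le n}\hookrightarrow \cD$ has $\tau^{\le n}$ as \emph{right} adjoint (for $X\in\cD^{\le n}$ one has $\Hom(X,\tau^{\le n}Y)\simeq\Hom(X,Y)$ because $\Hom(X,\tau^{\ge n+1}Y)=0$), so $\tau^{\le n}$ genuinely preserves all limits, and your first-paragraph justification is already correct as stated. Two small repairs: the target $\cD^{[0,n]}$ is an $(n{+}1)$-category rather than an $n$-category (harmless for Proposition~\ref{4pr:n_category}); and in the final step argue with the \emph{fiber} of $c$ rather than the cofiber, since $\tau^{\le n}$ preserves fibers but not cofibers---one gets $\tau^{\le n}\r{fib}(c)\simeq\r{fib}(\tau^{\le n}c)\simeq 0$ for all $n$, hence $\r{fib}(c)\in\bigcap_n\cD^{\ge n}$, which is zero by weak right completeness.
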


\begin{proof}
Let $n\ge 0$. By Lemma \ref{4le:coskeletal}, $Y_\bullet^+=\cosk_n(X_\bullet^+/X_{-1}^+)$ is an augmentation of $F$-descent, so that it suffices to show that the morphism
\[
c\colon K\coloneqq\lim_{p\in\del}F(X_p)\to L\coloneqq\lim_{p\in\del}F(Y_p)
\]
induced by $h_\bullet\colon X_\bullet^+\to Y_\bullet^+$ is an isomorphism. By \cite{Lu2}*{1.2.4.4, 1.2.4.5}, we have a morphism of converging spectral sequences
\[
\xymatrix{
\rE_1^{p,q}=\rH^q F(X_p)\ar@{=>}[r]\ar[d]_-{c_1^{p,q}} & \rH^{p+q} K\ar[d]^-{\rH^{p+q} c}\\
\TS{\prime}{\rE}{p,q}_1=\rH^q F(Y_p)\ar@{=>}[r] & \rH^{p+q} L,}
\]
concentrated in the first quadrant. For $p\le n$, since $h_p$ is an equivalence, $c_1^{p,q}$ is an isomorphism for all $q$. It follows that
$c_r^{p,q}$ is an isomorphism for $p+q\le n-1$, and $\tau^{\le n-1}c$ is an equivalence. Since $n$ is arbitrary and $\cD$ is weakly right complete, $c$ is an equivalence.
\end{proof}

We denote by $\PSLt$ (resp.\ $\PSRt$) the $\infty$-category defined as follows:
\begin{itemize}
  \item Objects of $\PSLt$ (resp.\ $\PSRt$) are presentable stable $\infty$-categories equipped with a t-structure.

  \item Morphisms of $\PSLt$ (resp.\ $\PSRt$) are t-exact functors admitting right (resp.\ left) adjoints.
\end{itemize}
The $\infty$-categories $\PSLt$ (resp.\ $\PSRt$) admit small limits, and those limits are preserved by the forgetful functor $\PSLt\to\PSL$ (resp.\ $\PSRt\to \PSR$). For a diagram $K\to\PSLt$ or $K\to\PSRt$, $(\lim\cC_k)^{\le 0}$ (resp.\ $(\lim\cC_k)^{\ge 0}$) is the full subcategory of $\lim\cC_k$ spanned by objects whose image in $\cC_k$ is in $\cC_k^{\le 0}$ (resp.\ $\cC_k^{\ge 0}$). For an interval $I\subseteq\dZ$, we have an equivalence $(\lim\cC_k)^{\in I}\to \lim \cC_k^{\in I}$.

We denote by $\PSLt[,\r{wrc}]$ (resp.\ $\PSRt[,\r{rc},\r{wlc}]$) the full subcategory of $\PSLt$ (resp.\ $\PSRt$) spanned by those $\cC$ that are weakly right complete (resp.\ right complete and weakly left complete). This full subcategory is stable under small limits in $\PSLt$ (resp.\ $\PSRt$).

\begin{proposition}\label{4pr:hyperdescent}
Consider a diagram
\[
\xymatrix{
\cD'^{op}\ar[d]_-{j^{op}}\ar[r]^-F & \PSRt[,\r{rc},\r{wlc}]\ar[d]^-{P}\\
\cD^{op}\ar[r]^-{G} & \Cat}
\]
of $\infty$-categories, in which $\cD$ admits pullbacks, $j$ is an inclusion satisfying the right lifting property with respect to $\partial\Delta^n\subseteq\Delta^n$ for $n\ge 2$, and $P$ is the forgetful functor. Assume that the arrows in $\cD'$ are stable under pullbacks in $\cD$ by arrows in $\cD'$. Let $X_\bullet^+\colon\rN(\del_+)^{op}\to\cD$ be a hypercovering for universal $G$-descent such that $X_\bullet^+\res\rN(\del_{s+})^{op}$ factorizes through $j$. Then $X_\bullet^+$ is an augmentation of $G$-descent.
\end{proposition}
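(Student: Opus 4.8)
The plan is to reduce Proposition \ref{4pr:hyperdescent} to the two results we have already established for the target $\infty$-category, namely Proposition \ref{4pr:n_category} (the $n$-categorical criterion) and Proposition \ref{4pr:t_descent} (the $t$-structure criterion), by decomposing the $t$-structure on each $\cC$ in $\PSRt[,\rc,\wlc]$ into its ``bounded pieces'' and the limit. Concretely, for an object $\cC$ that is right complete, one has $\cC \simeq \lim_n \cC^{\ge -n}$ (using right completeness), and each $\cC^{\ge -n}$ can in turn be filtered by the full subcategories $\cC^{[-n,m]}$ as $m\to\infty$; since $\cC$ is weakly left complete, combining these recovers all of $\cC$. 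The point of the hypothesis that $j$ has the right lifting property against $\partial\Delta^n\subseteq\Delta^n$ for $n\ge 2$ is that it forces the relevant diagrams $\cD'$ (hence $\cD$ restricted along $j$) to behave like a $1$-category up to the data actually seen by $F$, so that one can run the $n$-categorical argument in each truncation level.

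First I would fix $X_\bullet^+$ as in the statement and reduce to showing that the canonical map $G(X_{-1}^+)\to \lim_{[p]\in\del} G(X_p^+)$ is an equivalence; since $\cD^{op}\to\Cat$ lands in $\Cat$ and $P$ is the forgetful functor, it suffices to verify this after replacing $G$ by its composite with $F$ on the subcategory $\cD'$, i.e.\ to work with the functor $F\colon (\cD')^{op}\to \PSRt[,\rc,\wlc]$ and the restricted hypercovering $X_\bullet^+\res\N(\del_{s+})^{op}$, which factorizes through $j$ by hypothesis. Next, for each $n\ge 0$ let $Y_\bullet^{+,(n)}=\cosk_n(X_\bullet^+/X_{-1}^+)$; by Lemma \ref{4le:coskeletal} this is an augmentation of $G$-descent (the coskeletal truncation of a hypercovering for universal $G$-descent is still such), so it is enough to prove that the comparison map $\lim_{\del} F(X_p^+)\to \lim_{\del} F(Y_p^{+,(n)})$ becomes an equivalence after passing to a suitable truncation and then letting $n\to\infty$. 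For this I would apply, levelwise in the $t$-structure, the following observation: the functor $(-)^{[-a,b]}\colon \PSRt[,\rc,\wlc]\to \Cat$ lands in $b+a+1$-categories (a stable $\infty$-category with a $t$-structure concentrated in $[-a,b]$ is an $(b+a+1)$-category, by an inductive use of the fiber sequences defining the truncations, cf.\ the discussion before Lemma \ref{2le:wlc}). Composing $F$ with this functor gives $F^{[-a,b]}\colon (\cD')^{op}\to \Cat_{b+a+1}$, and the hypercovering $X_\bullet^+$, being of universal $G$-descent and hence of universal $F^{[-a,b]}$-descent, is an augmentation of $F^{[-a,b]}$-descent by Proposition \ref{4pr:n_category} applied to the $n$-category $\Cat_{b+a+1}$ — here is exactly where the right-lifting hypothesis on $j$ is used, to guarantee the required finite-limit and factorization properties of the diagram landing in the $n$-category. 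This shows $F^{[-a,b]}(X_{-1}^+)\simeq \lim_\del F^{[-a,b]}(X_p^+)$ for every finite interval $[-a,b]$.

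It remains to assemble the bounded statements into the unbounded one. Using that for a diagram in $\PSRt$ one has $(\lim\cC_k)^{[-a,b]}\simeq \lim \cC_k^{[-a,b]}$ (stated in the excerpt just before Proposition \ref{4pr:hyperdescent}), the previous paragraph gives an equivalence of $t$-structures on finite intervals between $F(X_{-1}^+)$ and $\lim_\del F(X_p^+)$; because both objects are right complete and weakly left complete — right completeness of the limit is preserved under limits in $\PSRt[,\rc,\wlc]$, and $F(X_{-1}^+)$ lies in $\PSRt[,\rc,\wlc]$ by hypothesis — the comparison functor is $t$-exact and an equivalence on every $\cC^{[-a,b]}$, hence (by right completeness, $\cC\simeq\lim_b \cC^{\ge -b}$, together with weak left completeness) an equivalence. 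Alternatively, once the comparison functor is known to be fully faithful and essentially surjective on hearts and $t$-exact, one concludes directly. Therefore $X_\bullet^+$ is an augmentation of $F$-descent, and unwinding the reduction in the first step, an augmentation of $G$-descent. The main obstacle I anticipate is the first, bookkeeping step: verifying carefully that the right-lifting property of $j$ against $\partial\Delta^n\subseteq\Delta^n$ for $n\ge 2$ is precisely what is needed so that the restricted hypercovering, viewed as a diagram in the $n$-category obtained after bounded truncation, still satisfies the hypotheses of Proposition \ref{4pr:n_category} (in particular that the relevant pullbacks computed in $\cD$ along arrows of $\cD'$ are detected correctly by $F$); the $t$-structure assembly and the invocation of Proposition \ref{4pr:t_descent}-style arguments for the left completion are comparatively routine.
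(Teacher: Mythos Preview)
Your idea of passing to the bounded pieces $\cC^{[-a,b]}$ and invoking an $n$-category argument is exactly what the paper does for the essential-surjectivity half of its proof, so that part of your outline is on target (though you should cite Lemma~\ref{4le:n_category} together with Lemma~\ref{4le:coskeletal} rather than Proposition~\ref{4pr:n_category}: the point is that $\Cat_n$ is an $(n+1)$-category, so one first replaces the hypercovering by its $(n+1)$-coskeleton via Lemma~\ref{4le:n_category}, and then applies Lemma~\ref{4le:coskeletal}).

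The genuine gap is your assembly step. First, ``$\cC\simeq\lim_b \cC^{\ge -b}$'' is \emph{left} completeness, not right; right completeness gives $\cC\simeq\lim_n \cC^{\le n}$, which reduces you to showing $f^*\colon \cE^{\le 0}\to\cC^{\le 0}$ is an equivalence. Second, and more seriously, knowing that $f^*$ is an equivalence on each $\cE^{[-a,0]}$ together with weak left completeness does \emph{not} by itself yield full faithfulness on $\cE^{\le 0}$: for $X,Y\in\cE^{\le 0}$ with no lower bound, you cannot recover $\Map(X,Y)$ from the bounded truncations without actual left completeness. What is missing is the use of the left adjoint $f_!$ (available because we are in $\PSR$). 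The paper uses this to prove full faithfulness separately, via a third-quadrant spectral sequence $\r E_1^{pq}=\rH^q(f_{-p!}f_{-p}^*K)\Rightarrow \rH^{p+q}(f_!f^*K)$ comparing $f_!f^*K$ with the analogous expression for the $n$-coskeletal truncation; this shows $\tau^{\ge 1-n}(f_!f^*K\to K)$ is an equivalence for every $n$, and then weak left completeness of $F(X^+_{-1})$ finishes. Only after full faithfulness is established does the $n$-category trick on $\cC^{[1-n,0]}$ give essential surjectivity. Your sketch conflates these two halves.

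A smaller point: the right lifting property of $j$ against $\partial\Delta^n\subseteq\Delta^n$ for $n\ge 2$ is not used where you say. Its role is to let one work with the semisimplicial restriction (where $F$ is defined) while the hypercovering and pullbacks live in $\cD$; it has nothing to do with the $n$-categorical target.
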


\begin{proof}
By the right completeness of $F(X^+_p)$ for $p\ge -1$, it suffices to show that $(F\circ(X_\bullet^+)^{op}\res\rN(\del_{s+}))^{\le 0}$ is a limit diagram. Put $\cC=\lim(F\circ (X_\bullet^+)^{op}\res\rN(\del_s))$ for simplicity. We then have the induced t-exact functor $f^*\colon F(X^+_{-1})\to\cC$. Let $f_!\colon\cC\to F(X^+_{-1})$ be a left adjoint of $f^*$. The restrictions of these provide adjoint functors
\[
(f_!)^{\le 0}\colon\cC^{\le 0}\to F(X^+_{-1})^{\le 0},\quad (f^*)^{\le 0}\colon F(X^+_{-1})^{\le 0}\to\cC^{\le 0}.
\]
Let us first show that $a\colon f_! f^*K \to K$ is an equivalence for all $K\in F(X^+_{-1})^{\le 0}$, namely, that $(f^*)^{\le 0}$ is fully faithful. This is similar to Proposition \ref{4pr:t_descent}. Take $n\ge 0$. The morphism $h_\bullet\colon X^+_\bullet\to\cosk_n(X^+_\bullet/X^+_{-1})=Y^+_\bullet$ induces a diagram
\[
\xymatrix{f_!f^* K \ar[rr]^-{c}\ar[rd]_-{a} && g_!g^* K\ar[dl]^-{b}\\
&K}
\]
where $g_!$ is a left adjoint of the t-exact functor $g^*\colon F(X^+_{-1})\to \lim F\circ (Y^+_\bullet)^{op}\res\rN(\del_s)$. By Lemma
\ref{4le:coskeletal}, $Y^+_\bullet$ is an augmentation of $G$-descent, so that $b$ is an equivalence. Moreover, we have $c=\colim (f_{p!} f_p^* K\to g_{p!}g_p^*K)$, where $f_{p!}$ is a left adjoint of $f_p^*\colon F(X^+_{-1})\to F(X^+_{p})$, $g_{p!}$ is a left adjoint of $g_p^*\colon
F(Y^+_{-1})\to F(Y^+_{p})$, and $f_{p!}f_p^*\to g_{p!}g_p^*$ is induced by $h_p$. By \cite{Lu2}*{1.2.4.4, 1.2.4.5}, we have a morphism of converging spectral sequences
\[
\xymatrix{
\rE_1^{p,q}=\rH^q(f_{-p!}f_{-p}^* K)\ar@{=>}[r]\ar[d]_-{c_1^{p,q}} & \rH^{p+q} f_! f^* K\ar[d]^-{\rH^{p+q} c}\\
\TS{\prime}{\rE}{p,q}_1=\rH^q(g_{-p!}g_{-p}^* K)\ar@{=>}[r] & \rH^{p+q} g_! g^* K,}
\]
concentrated in the third quadrant. For $p\ge -n$, since $h_p$ is an equivalence, $c_1^{p,q}$ is an isomorphism for all $q$. It follows that
$c_r^{p,q}$ is an isomorphism for $p+q\ge 1-n$, and $\tau^{\ge 1-n}c$ is an equivalence. Therefore, $\tau^{\ge 1-n} a$ is an equivalence. Since $n$ is arbitrary and $F(X^+_{-1})$ is weakly left complete, $a$ is an equivalence.

It remains to show that $d\colon L\to f^*f_! L$ is an equivalence for every $L\in \cC^{\le 0}$. Since $\cC$ is weakly left complete, it suffices to show that $\tau^{\ge 1-n}d$ is an equivalence for every $n \ge 1$. For this, we may assume $L\in \cC^{[1-n,0]}$. We will show that $L$ is in the essential image of $(f^*)^{\le 0}$. Since $(f^*)^{\le 0}$ is fully faithful, this proves that $d$ is an equivalence. Let $H\colon\PSRt[,\r{rc},\r{wlc}]\to\c{C}\r{at}_n$ be the functor sending $\cF$ to $\cF^{[1-n,0]}$, where $\c{C}\r{at}_n$ is the $\infty$-category of $n$-categories. It suffices to show that $H\circ F\circ (X^+_\bullet)^{op}\res\rN(\del_{s+})$ is a limit diagram. Since $\c{C}\r{at}_n$ is an $(n+1)$-category, we may assume that $X^+_\bullet/X^+_{-1}$ is $(n+1)$-coskeletal by Lemma \ref{4le:n_category} applied to $X^+_\bullet\to\cosk_{n+1}(X^+_\bullet/X^+_{-1})$. In this case, $F\circ (X^+_\bullet)^{op}\res\rN(\del_{s+})$ is a limit diagram by Lemma \ref{4le:coskeletal}.
\end{proof}

The following variant of Proposition \ref{4pr:hyperdescent} will be used to establish proper hyperdescent. To state it conveniently, we introduce a bit of terminology. Let $\cC$ be an $\infty$-category admitting pullbacks, and $F\colon\cC^{op}\to \Cat$ a functor. We say that a morphism $f$ of $\cC$ is \emph{$F$-conservative} if $F(f)$ is conservative. We say that $f$ is \emph{universally $F$-conservative} if every pullback of $f$ in $\cC$ is $F$-conservative. We say that an augmented simplicial object $X_\bullet^+$ of $\cC$ is a \emph{hypercovering for universal $F$-conservativeness} if $X^+_n\to (\cosk_{n-1}(X^+_\bullet/X^+_{-1}))_n$ is universally $F$-conservative for every $n\ge 0$.

\begin{proposition}\label{4pr:hyperdescent2}
Let $\cC$ be an $\infty$-category admitting pullbacks, let $F\colon\cC^{op}\to\PSLt[,\r{wrc}]$ be a functor, and let $a$ be an integer.
\begin{enumerate}
  \item Let $G\colon\PSLt[,\r{wrc}]\to\Cat$ be the functor sending $\cC$ to $\cC^{\ge a}$. If $X_\bullet^+$ is a hypercovering for universal $(G\circ F)$-descent, then it is an augmentation of $(G\circ F)$-descent.

  \item Let $G\colon\PSLt[,\r{wrc}]\to\Cat$ be the functor sending $\cC$ to $\cC^+\coloneqq\bigcup_n \cC^{\ge n}$. If $X_\bullet^+$ is a hypercovering for universal $(G\circ F)$-descent and for universal $(P\circ F)$-conservativeness, where $P\colon \PSLt[,\r{wrc}]\to \Cat$ is the forgetful functor, then it is an augmentation of $(G\circ F)$-descent.
\end{enumerate}
\end{proposition}

\begin{proof}
The proof for (1) is similar to the proof of Proposition \ref{4pr:hyperdescent}. For (2), the conservativeness implies that $G(\lim F\circ (X^+_\bullet)^{op})\to \lim G\circ F\circ(X_\bullet^+)^{op}$ is an equivalence. The rest of the proof is similar.
\end{proof}

\subsection{Smooth hyperdescent}
\label{4ss:smooth}

The \'etale $\infty$-topos of an affine scheme is not hypercomplete (see \cite{Lu1}*{\Sec6.5} for the definition) in general. By contrast, the stable $\infty$-categories we constructed satisfy smooth hyperdescent.

We regard the map
\begin{align*}
\EO{}{\Chpar{}}\otimes{}\coloneqq(\EO{}{\Chpar{}}{\r{I}}{})^\otimes\colon\rN(\Chpar{})^{op}\times\rN(\Rind)^{op}\to\PSLM
\end{align*}
and the map
\begin{align*}
\EO{}{\Chpar{}_\Box}{}{!}\colon \rN(\Chpar{}_\Box)_F\times\rN(\Rind_{\ltor})^{op}\to\PSL
\end{align*}
from \cite{LZ1} as functors
\begin{align*}
\EO{}{\Chpar{}}\otimes{}&\colon\rN(\Chpar{})^{op}\to\Fun(\rN(\Rind)^{op},\PSLM),\\
\EO{}{\Chpar{}_\Box}{}{!}&\colon\rN(\Chpar{}_\Box)_F\to\Fun(\rN(\Rind_{\ltor})^{op},\PSL).
\end{align*}
In the adic case, we have similar functors
\begin{align*}
\EO{\ra}{\Chpar{}}\otimes{}&\colon\rN(\Chpar{})^{op}\to\Fun(\rN(\Rind)^{op},\PSLM),\\
\EO{\ra}{\Chpar{}_\Box}{}{!}&\colon\rN(\Chpar{}_\Box)_F\to\Fun(\rN(\Rind_{\ltor})^{op},\PSL).
\end{align*}
from Proposition \ref{1pr:monoidal} and \eqref{1eq:lowersh_adic}, respectively.

\begin{definition}\label{4de:covering}
We say that an augmented simplicial object $X_\bullet^+$ in $\Chpar{}$ (or similar $\infty$-categories) is a \emph{(P) hypercovering} for a property (P) on morphisms if $X^+_q \to (\cosk_{q-1}(X^+_{\bullet}/X^+_{-1}))_q$ is \emph{surjective} and satisfies (P) for every $q\ge 0$.
\end{definition}

\begin{proposition}\label{4pr:hyperdescent_stack}
Every smooth hypercovering in $\Chpar{}$ (resp.\ $\Chpar{}_\Box$) is an augmentation of both $\EO{}{\Chpar{}}\otimes{}$-descent (resp.\
$\EO{}{\Chpar{}_\Box}{op}{!}$-descent) and $\EO{\ra}{\Chpar{}}\otimes{}$-descent (resp.\ $\EO{\ra}{\Chpar{}_\Box}{op}{!}$-descent).
\end{proposition}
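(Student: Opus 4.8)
The plan is to reduce all four assertions to the single non-adic statement---that every smooth hypercovering in $\Chpar{}$ is an augmentation of $\EO{}{\Chpar{}}{*}{\otimes}$-descent---and to prove the latter by bootstrapping the descent along a single smooth surjective morphism by means of Proposition \ref{4pr:hyperdescent2}. First I would record the reductions. Since limits in $\Fun(\N(\Rind)^{op},\PSLM)$ are computed pointwise and limits in $\PSLM$ are created by the forgetful functor to $\Cat$ (through $\PSL\subseteq\Cat$, which preserves small limits), it suffices, for each $\lambda=(\Xi,\Lambda)$ of $\Rind$, to treat single $\infty$-categories. For $\EOA{}{\Chpar{}}{*}{\otimes}$ this follows at once from the non-adic case: by Definition \ref{1de:enhanced_adic} and Corollary \ref{1co:enhanced_adic_limit}, $\EOA{}{\Chpar{}}{*}{\otimes}(X,\lambda)=\lim_{\Xi^{op}}\cD(X,\Lambda(\xi))^{\otimes}$, so for a smooth hypercovering $X^+_\bullet$ the needed equivalence $\lim_{\Xi^{op}}\cD(X^+_{-1},\Lambda(\xi))^{\otimes}\simeq\lim_{\del}\lim_{\Xi^{op}}\cD(X^+_p,\Lambda(\xi))^{\otimes}$ results from the non-adic $\EO{}{\Chpar{}}{*}{\otimes}$-descent with the one-object coefficient $\Lambda(\xi)$, after exchanging the two limits. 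For the $\Chparl$ cases, Poincar\'e duality (Proposition \ref{1pr:poincare}~(2)) provides, for every smooth $f$, a natural equivalence $f^!\simeq f^*\langle\dim f\rangle$ (in both the adic and non-adic settings); since $\EO{}{\Chparl}{}{!}$ (resp.\ $\EOA{}{\Chparl}{}{!}$) is built out of $\EO{}{\Chpar{}}{*}{\otimes}$ (resp.\ its adic companion) precisely by these equivalences, the cosimplicial object attached to a smooth hypercovering by the $!$-functor is identified, through the autoequivalence given by twisting with the relative dimension over $X^+_{-1}$ at each level, with the one attached by the $*$-functor; as Tate twists do not affect totalizations and are trivial at the augmentation, the $!$-descent for smooth hypercoverings is equivalent to the $*$-descent restricted to $\Chparl\subseteq\Chpar{}$.

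It remains to prove that a smooth hypercovering $X^+_\bullet$ in $\Chpar{}$ satisfies $\cD(X^+_{-1},\lambda)\simeq\lim_{\del}\cD(X^+_p,\lambda)$ for every $\lambda$ of $\Rind$. I would use the following inputs: (i) every smooth surjective morphism of higher Artin stacks is of universal $\EO{}{\Chpar{}}{*}{\otimes}$-descent (this is how the functor is constructed in \cite{LZ}, together with the base change theorem); (ii) for a smooth morphism $f$ the functor $f^*$ is $t$-exact for the usual $t$-structure (checked on atlases, using that the $*$-pullback between schemes is exact) and admits the right adjoint $f_*$; (iii) for $f$ surjective, $f^*$ is conservative and reflects the usual $t$-structure, and both properties persist under base change; (iv) the usual $t$-structure on $\cD(X,\lambda)$ is weakly right complete (cohomology sheaves detect zero objects) and right complete (by \cite{Lu2}*{1.3.5.21} when $X$ is a scheme, and in general by the limit presentation of $\cD(X,\lambda)$ in \cite{LZ} together with the stability, recorded in \Sec\ref{4ss:hyperdescent}, of right complete weakly left complete objects under small limits).

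Granting (i)--(iv), fix an integer $a$. By (i) and (iii), each transition morphism $X^+_q\to(\cosk_{q-1}(X^+_\bullet/X^+_{-1}))_q$ of the hypercovering---being smooth surjective---is of universal $\cD^{\geq a}(-,\lambda)$-descent (from universal $\EO{}{\Chpar{}}{*}{\otimes}$-descent together with $t$-exactness and conservativity). Hence $X^+_\bullet$ is a hypercovering for universal $(G\circ F)$-descent, where $F=\cD(-,\lambda)$ on the wide subcategory of smooth morphisms of $\Chpar{}$ (valued in $\PSLt[,\wrc]$ by (ii) and (iv)) and $G(\cC)=\cC^{\geq a}$; Proposition \ref{4pr:hyperdescent2} then shows $X^+_\bullet$ is an augmentation of $\cD^{\geq a}(-,\lambda)$-descent. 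Letting $a\to-\infty$: since $f^*$ is $t$-exact it commutes with the truncations $\tau^{\geq -a}$, so $\lim_{\del}$ commutes with $\lim_a$, and right completeness yields
\[
\cD(X^+_{-1},\lambda)\simeq\lim_a\cD^{\geq -a}(X^+_{-1},\lambda)\simeq\lim_a\lim_{\del}\cD^{\geq -a}(X^+_p,\lambda)\simeq\lim_{\del}\cD(X^+_p,\lambda).
\]
Reassembling over $\lambda$ and along $\PSLM\to\Cat$ completes the proof of the $\EO{}{\Chpar{}}{*}{\otimes}$ case, and with it all four cases.

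I expect two points to require the most care. The first is the right completeness of the usual $t$-structure on $\cD(X,\lambda)$ for an arbitrary higher Artin stack $X$: this has to be deduced from the scheme case through the inductive construction of the derived $\infty$-category and the stability of the class of right complete, weakly left complete presentable stable $\infty$-categories under small limits---formal, but sensitive to which limit presentation is invoked. The second, and the genuinely delicate point, is turning the Poincar\'e-duality identification of the first paragraph into an honest equivalence of cosimplicial objects: this rests on the coherent manner in which $\EO{}{\Chparl}{}{!}$ is assembled from $\EO{}{\Chparl}{}{*}$ via the natural transformations of Proposition \ref{1pr:poincare}, so that the per-level dimension twists are compatible with all coface and codegeneracy maps.
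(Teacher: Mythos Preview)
Your argument is essentially correct, but it diverges from the paper's proof in two related ways, and the paper's route is cleaner on precisely the point you flag as delicate.

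First, for the $*$-case the paper does not pass through $\cD^{\ge a}$ and then take a limit over $a$. It applies Proposition~\ref{4pr:hyperdescent} (not~\ref{4pr:hyperdescent2}) directly: one works over $X^+_{-1}$, takes $\cD'=\Chpar{}_{\r{sm}/X^+_{-1}}\subseteq \cD=\Chpar{}_{/X^+_{-1}}$, and lets $F$ send a smooth $X^+_{-1}$-stack to $\cD(-,\lambda)$ with its usual $t$-structure. The point is that Proposition~\ref{4pr:hyperdescent} lands in $\PSRt[,\rc,\wlc]$ and already yields descent for the whole $\cD$, so the extra step of gluing $\cD^{\ge a}$'s via right completeness is unnecessary. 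Your route through Proposition~\ref{4pr:hyperdescent2} is fine (universal descent for $\cD^{\ge a}$ does follow from universal descent for $\cD$ plus $t$-exactness, and the limit over $a$ commutes with the cosimplicial limit), but it uses the same completeness hypotheses and is strictly longer.

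Second, and more substantively, for the $!$-case the paper does \emph{not} reduce to the $*$-case via a coherent Poincar\'e duality identification of cosimplicial diagrams. Instead it applies Proposition~\ref{4pr:hyperdescent} a second time, now with $F$ assigning to each smooth $X^+_{-1}$-stack the usual $t$-structure shifted by twice the relative dimension over $X^+_{-1}$; for this shifted $t$-structure, $f^!$ is $t$-exact for smooth $f$ (Lemma~\ref{2le:perverse_smooth_pullback}), and the argument runs exactly as before. This completely sidesteps the coherence issue you identify in your final paragraph: there is no need to exhibit the levelwise twists $\langle\dim(X^+_p\to X^+_{-1})\rangle$ as an equivalence of cosimplicial objects, because one never leaves the $!$-formalism. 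Your reduction would ultimately work, but verifying the required compatibility of the Poincar\'e equivalences with all simplicial structure maps is real additional labor that the paper's $t$-structure shift makes unnecessary. The adic cases are handled identically (or, as you note, by taking limits over $\Xi^{op}$).
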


\begin{proof}
Let $X_\bullet^+$ be an augmented simplicial object of $\Chpar{}$ (resp.\ $\Chpar{}_\Box$). It suffices to apply Proposition \ref{4pr:hyperdescent} to the full subcategory $\Chpar{}_{\r{sm}/X_{-1}}\subseteq \Chpar{}_{/X_{-1}}$ spanned by higher Artin stacks smooth over $X_{-1}$. In the notation of Proposition \ref{4pr:hyperdescent}, $F$ associates the usual t-structure (resp.\ the usual t-structure shifted by twice the relative dimension over $X_{-1}$). This proof applies to both the non-adic case and the adic case. The adic case can also be deduced from the non-adic case by taking limits.
\end{proof}

\subsection{Proper hyperdescent}
\label{4ss:proper}

In this section, we study hyperdescent properties for proper morphisms. We start from some lemmas for preparation.

\begin{lem}\label{4le:real}
Let $\cC$ and $\cD$ be stable $\infty$-categories equipped with left complete t-structures. Let $F\colon \cC\to \cD$ be a t-exact functor. Then
$\cC^{\le 0}$ admits geometric realizations, and geometric realizations are preserved by $F$.
\end{lem}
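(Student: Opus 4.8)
The plan is to deduce Lemma~\ref{4le:real} directly from \cite{Lu2}*{Proposition 1.2.4.5}, which asserts that a stable $\infty$-category $\cC$ equipped with a left complete $t$-structure admits geometric realizations of simplicial objects, and moreover that these may be computed via the spectral sequence associated to the skeletal filtration; in particular, for a simplicial object $X_\bullet$ with values in $\cC^{\le 0}$, one has a convergent spectral sequence $\r E_1^{pq}=\r H^q(X_{-p})\Rightarrow \r H^{p+q}|X_\bullet|$ concentrated in the region $p\le 0$. The first step is to recall that $\cC^{\le 0}$ is closed under geometric realizations: this follows because, with $X_\bullet$ taking values in $\cC^{\le 0}$, the spectral sequence shows $\r H^{p+q}|X_\bullet| = 0$ whenever $p+q>0$ (since either $p>0$, impossible, or $q>0$, which kills $\r H^q(X_{-p})$ as $X_{-p}\in\cC^{\le 0}$), whence $|X_\bullet|\in\cC^{\le 0}$. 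Thus $\cC^{\le 0}$ admits geometric realizations, computed as in $\cC$.

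Next I would handle the preservation statement. Let $X_\bullet$ be a simplicial object of $\cC^{\le 0}$; we want the canonical map $|F\circ X_\bullet| \to F(|X_\bullet|)$ in $\cD^{\le 0}$ to be an equivalence. Both sides carry the spectral sequences of \cite{Lu2}*{1.2.4.5}: the left-hand side has $\r E_1^{pq}=\r H^q(F(X_{-p}))$ and the right-hand side has the image under $F$ of the spectral sequence for $|X_\bullet|$, which by $t$-exactness of $F$ has $\r E_1^{pq}=\r H^q(F(X_{-p}))$ as well, and the comparison map $|F\circ X_\bullet|\to F(|X_\bullet|)$ induces the identity on $\r E_1$-pages. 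Since both spectral sequences are concentrated in the half-plane $p\le 0$, they converge (the abutment filtration in each fixed total degree $n$ is finite from one side and exhausts from the other by left completeness), so a morphism inducing an isomorphism on $\r E_1$ induces an isomorphism on abutments; hence $\r H^n(|F\circ X_\bullet|)\to \r H^n(F(|X_\bullet|))$ is an isomorphism for all $n$. Because $F$ is conservative and $t$-exact and $\cD$ is left complete, the family $(\r H^n)$ on $\cD$ is conservative (a left complete $t$-structure is in particular weakly left complete, so by the discussion preceding Lemma~\ref{2le:wlc} the cohomology functors detect equivalences), so the comparison map is an equivalence.

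The conservativity hypothesis on $F$ is actually only needed for the very last step, to conclude that an isomorphism on all cohomology objects in $\cD$ is an equivalence; one could alternatively phrase this purely in terms of left completeness of $\cD$ and avoid invoking conservativity there, but since the lemma is stated with $F$ conservative it is cleanest to use it directly. I expect the only genuine point requiring care—and hence the main obstacle—is checking that the comparison map $|F\circ X_\bullet|\to F(|X_\bullet|)$ is in fact \emph{filtered-compatibly} the identity on $\r E_1$-pages, i.e.\ that the two instances of \cite{Lu2}*{1.2.4.5} (one in $\cC$, one in $\cD$) are matched by $F$ at the level of the skeletal filtrations and not merely abstractly isomorphic on $\r E_1$. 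This is a naturality statement internal to the construction in \cite{Lu2}: the skeletal filtration of a geometric realization is functorial, $F$ preserves finite colimits (being exact) hence preserves the skeleta $\mathrm{sk}_n$, and $F$ is $t$-exact hence commutes with the $\r H^q$ computing the associated graded; assembling these gives the required compatibility. With that in hand the argument is a routine spectral sequence comparison, so I would keep the write-up short, citing \cite{Lu2}*{1.2.4.5} for the spectral sequences and the remark before Lemma~\ref{2le:wlc} for conservativity of cohomology on a left complete $t$-structure.
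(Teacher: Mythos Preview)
Your approach is essentially the paper's: both deduce the lemma directly from \cite{Lu2}*{1.2.4.5}, and you have simply unpacked the spectral sequence comparison that the paper leaves implicit. One small imprecision: your parenthetical appeal to the discussion before Lemma~\ref{2le:wlc} is not quite right, since that discussion says $(\r H^i)$ is conservative if and only if the $t$-structure is weakly left \emph{and} weakly right complete, and you have only the former; however, since both $|F\circ X_\bullet|$ and $F(|X_\bullet|)$ lie in $\cD^{\le 0}$, the cofiber of the comparison map lies in $\cD^{\le 0}$ with vanishing cohomology, hence in $\bigcap_n \cD^{\le -n}$, which is zero by weak left completeness alone---so the conclusion stands and conservativity of $F$ is indeed not needed.
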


\begin{proof}
By \cite{Lu2}*{1.2.4.5}, for any simplicial object $X_\bullet$ of $\cC$, there exist a geometric realization $X=\lvert X_\bullet \rvert$ in $\cC$ and a geometric realization $Y=\lvert FX_\bullet\rvert$ in $\cD$, and $\rH^n(f)$ is an isomorphism for all $n$,  where $f$ is the morphism $Y\to FX$. It follows that $f$ is an equivalence.
\end{proof}

\begin{lem}\label{4le:split}
Let $\cC$, $\cD$, $\cE$ be stable $\infty$-categories equipped with t-structures such that $\cC$ and $\cD$ are both left and right complete. Let $F\colon \cC\to \cD$ and $G\colon \cC\to \cE$ be t-exact functors. Assume $G$ conservative. Then $\cC$ admits $G$-split \cite{Lu2}*{4.7.3.2} geometric realizations, and those geometric realizations are preserved by $F$.
\end{lem}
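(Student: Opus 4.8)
The statement I need to prove is Lemma \ref{4le:split}: given stable $\infty$-categories $\cC,\cD,\cE$ with $t$-structures, $\cC$ and $\cD$ both left and right complete, and conservative $t$-exact functors $F\colon\cC\to\cD$, $G\colon\cC\to\cE$, I must show $\cC$ admits $G$-split geometric realizations and they are preserved by $F$. The plan is to reduce the problem to the already-proven Lemma \ref{4le:real} (which handles geometric realizations in $\cC^{\le 0}$) by combining two inputs: first, a $G$-split simplicial object is automatically split after applying $G$, hence has a colimit computed as the splitting; second, the truncation functors allow one to control where a $G$-split simplicial diagram lives and to bootstrap from the bounded-above case to the general case using right completeness.

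\textbf{Key steps.} First I would recall that if $X_\bullet$ is a $G$-split simplicial object of $\cC$, then $G(X_\bullet)$ is a split simplicial object of $\cE$, so $\colim G(X_\bullet)$ exists in $\cE$ and equals $G(X_0)$ (more precisely $G(X_{[-1]})$ for the augmentation coming from the splitting); in particular $G(X_\bullet)$ has a colimit and $G$ preserves it trivially. The real content is that the colimit exists \emph{upstairs} in $\cC$ and is preserved by $F$. Second, I would observe that $G$ being conservative and $t$-exact forces each $X_n$, and more importantly the putative colimit, to have cohomology controlled by that of the split pieces; concretely, since $G$ is $t$-exact and conservative, $X_\bullet$ lands in $\cC^{[a,b]}$ for some $a,b$ as soon as $G(X_\bullet)$ does, and a split simplicial object lands in $\cC^{[a,b]}$ iff its colimit (the $[-1]$ term of the splitting) does. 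So we may first treat the case where $X_\bullet$ factors through $\cC^{\le 0}$: here Lemma \ref{4le:real} applies directly (with the roles played by the conservative $t$-exact functor $F$ — or $G$), giving that $\colim X_\bullet$ exists in $\cC^{\le 0}$ and is preserved by $F$. Third, for a general $G$-split $X_\bullet$, apply the truncation functors: $\tau^{\le n}X_\bullet$ is still $G$-split (since $G$ is $t$-exact, $\tau^{\le n}$ commutes with $G$ and splittings are preserved by any functor), so $\colim \tau^{\le n}X_\bullet$ exists and is preserved by $F$; then use that $\cC$ and $\cD$ are right complete to pass to the limit over $n$, identifying $\colim X_\bullet$ with $\lim_n \colim \tau^{\le n} X_\bullet$ and checking $F$ commutes with this because $F$ is $t$-exact (hence commutes with $\tau^{\le n}$) and both categories are right complete. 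One must similarly invoke left completeness to handle the lower truncations $\tau^{\ge -m}$, exactly as in the proof of \ref{4le:real}, or simply note that a $G$-split simplicial object has colimit computed degreewise in cohomology via the split extra degeneracy, so $\rH^k(\colim X_\bullet)\simeq \rH^k(X_{[-1]})$ and this is manifestly compatible with $F$.

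\textbf{Main obstacle.} The delicate point is making precise the claim that a $G$-split simplicial object $X_\bullet$ in $\cC$ has a colimit in $\cC$ \emph{even though} $\cC$ need not admit all geometric realizations: the extra degeneracy (the splitting data) upgrades $X_\bullet$ to a diagram indexed by $\del_{+,s}^{op}$ with an absolute colimit, so $\colim X_\bullet$ exists and is preserved by \emph{every} functor out of $\cC$, in particular by $F$ and $G$. Thus, strictly, the statement ``$\cC$ admits $G$-split geometric realizations and $F$ preserves them'' is almost formal from the definition of split simplicial object (\cite{Lu1}*{6.1.3}) — any functor preserves colimits of split simplicial objects. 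The reason the lemma is stated with all the $t$-structure hypotheses is presumably that in the intended application $G$-\emph{split} is a condition that only becomes absolute after checking compatibility with the $t$-structures, or the authors want the cleaner packaging alongside \ref{4le:real}; so the write-up should (i) recall that split simplicial objects have absolute colimits, hence $\cC$ has them and $F,G$ preserve them, and (ii) if a finer statement is needed — that the colimit agrees with the one computed via left/right completeness and cohomology — deduce it from \ref{4le:real} by the truncation argument above. I expect the bulk of the genuine work to be step (ii), i.e.\ checking the truncation functors send $G$-split objects to $G$-split objects and that the resulting tower of colimits reassembles correctly under the completeness hypotheses, all of which runs parallel to Lurie's \cite{Lu2}*{1.2.4.5} and the proof of Lemma \ref{4le:real}.
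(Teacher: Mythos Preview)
There is a genuine gap stemming from a misreading of the term ``$G$-split.'' A $G$-split simplicial object $X_\bullet$ of $\cC$ is one for which $G(X_\bullet)$ extends to a split simplicial diagram $\N(\del_{-\infty})^{op}\to\cE$; the extra degeneracy lives in $\cE$, not in $\cC$. In your ``Main obstacle'' paragraph you write that ``the extra degeneracy upgrades $X_\bullet$ to a diagram indexed by $\del_{+,s}^{op}$ with an absolute colimit, so $\colim X_\bullet$ exists and is preserved by every functor out of $\cC$.'' This is false: $X_\bullet$ itself has no splitting, and the existence of $|X_\bullet|$ in $\cC$ is genuinely nontrivial --- it is exactly why the lemma requires the completeness and conservativity hypotheses. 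Your conclusion that the statement is ``almost formal'' contradicts your own correct observation, two paragraphs earlier, that ``the real content is that the colimit exists upstairs in $\cC$.'' You never resolve this contradiction.

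Your fallback approach (ii) via truncations is not worked out and has its own problems. You assert that a $G$-split simplicial object lands in $\cC^{[a,b]}$ whenever its putative colimit does, but there is no reason the individual $X_n$ are uniformly bounded. You also do not explain why the tower $|\tau^{\le n} X_\bullet|$ reassembles to a colimit of $X_\bullet$; geometric realization does not commute with $\tau^{\le n}$ in general, and controlling this requires precisely the cohomological input you have not supplied. The paper's proof is much more direct: apply $\rH^q$ to the split augmented diagram $G(X_\bullet)\to Y_{-1}$ in $\cE$ to see that the unnormalized chain complex $\dots\to\rH^q G(X_1)\to\rH^q G(X_0)\to\rH^q Y_{-1}\to 0$ is acyclic; since $G$ is $t$-exact and conservative, the complex $\dots\to\rH^q X_1\to\rH^q X_0$ in $\cC^\heartsuit$ is then a resolution of its cokernel $A^q$. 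One now invokes \cite{Lu2}*{1.2.4.12}, which says exactly that under left and right completeness such a simplicial object admits a geometric realization with $\rH^q|X_\bullet|\simeq A^q$, and this is visibly preserved by the $t$-exact functor $F$ (applied in the target $\cD$, which is also left and right complete).
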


\begin{proof}
Let $X_\bullet$ be a $G$-split simplicial object of $\cC$, and $Y_\bullet\colon\rN(\del_+)^{op}\to\cD$ a split augmentation of $G\circ
X_\bullet$. Then the unnormalized cochain complex
\[
\dots \to \rH^q Y_2\to \rH^q Y_1 \to \rH^q Y_0\to \rH^q Y_{-1} \to 0
\]
is acyclic. Since $G$ is conservative, it follows that the unnormalized cochain complex
\[
\dots \to \rH^q X_2\to \rH^q X_1 \xrightarrow{\theta^q} \rH^q X_0
\]
is an acyclic resolution of the object $A^q=\r{coker}(\theta^q)$ in the heart of $\cC$ and the same holds after applying the functor $F$. By \cite{Lu2}*{1.2.4.12}, $X_\bullet$ admits a geometric realization $X$, $FX_\bullet$ admits a geometric realization $Z$, and $\rH^n(f)$ is an
isomorphism for all $n$, where $f$ is the morphism $Z\to FX$. It follows that $f$ is an equivalence.
\end{proof}

The functor $\EO{}{\Chpar{}}\otimes{}$ restricts to a functor
\[
\EO{\geq 0}{\Chpar{}_\Box}{*}{}\colon\rN(\Chpar{}_\Box)^{op}\to\Fun(\rN(\Rind_{\ltor})^{op},\Cat)
\]
sending $X$ to the assignment $\lambda\mapsto\cD^{\geq 0}(X,\lambda)$.

\begin{proposition}\label{4pr:proper_descent}
Let $\dS$ be a $\Box$-coprime (resp.\ $\Box$-coprime \emph{locally Noetherian}, that is, there exists an atlas $S\to\dS$ where $S$ is a locally
Noetherian scheme) higher Artin stack.
\begin{enumerate}
  \item For every object $\lambda$ of $\Rind_{\ltor}$ and every Cartesian square
      \[
      \xymatrix{
      W \ar[r]^-{g} \ar[d]_-{q} & Z \ar[d]^-{p} \\
      Y \ar[r]^-{f} & X }
      \]
      in $\Chpar{}_\Box$ (resp.\ $\Chpars$) with $p$ proper of finite diagonal (resp.\ proper and $1$-Artin), the induced square
      \[
      \xymatrix{
      \cD^{\ge 0}(Z,\lambda)  \ar[d]_-{g^*} & \cD^{\ge 0}(X,\lambda) \ar[l]_-{p^*} \ar[d]^-{f^*}\\
      \cD^{\ge 0}(W,\lambda) & \cD^{\ge 0}(Y,\lambda) \ar[l]_-{q^*} }
      \]
      is right adjointable.

  \item Every proper finite-diagonal hypercovering in $\Chpar{}_\Box$ (resp.\ proper and $1$-Artin hypercovering in $\Chpars$) is an augmentation of $\EO{\geq 0}{\Chpar{}_\Box}{*}{}$-descent.
\end{enumerate}
\end{proposition}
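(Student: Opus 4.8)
The plan is to obtain (1) as the restriction to connective‑below objects of the proper base change theorem already available in \cite{LZ}, and to obtain (2) from (1) by the same bootstrapping used for smooth hyperdescent in Proposition \ref{4pr:hyperdescent_stack}, with the weakly‑right‑complete criterion of Proposition \ref{4pr:hyperdescent2} replacing Proposition \ref{4pr:hyperdescent}.

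For (1), I would first reduce to the case in which $X$, hence $Z$, $Y$, $W$, lies in $\Schqcs$, using that $\EO{\ge 0}{\Chparl}{*}{}$ satisfies smooth descent (part (2) of Proposition \ref{4pr:hyperdescent_stack}) together with the compatibility of $p_*$ and $f^*$ with smooth pullback from \cite{LZ}. In the scheme case $p^*$ and $q^*$ are $t$-exact, $p_*$ and $q_*$ are left $t$-exact and hence restrict to right adjoints between the $\cD^{\ge 0}$'s, and the only remaining point is that the base change transformation $f^*p_*\to q_*g^*$ is an equivalence on $\cD^{\ge 0}$; this is proper base change (classical for schemes, and for stacks via the Base Change Theorem of \cite{LZ} for the $!$-operations together with the identification $p_!\simeq p_*$ for $p$ proper, the latter deduced from the representable case by smooth descent when $p$ is not representable). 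Restricting to $\cD^{\ge 0}$ is what absorbs the possibly unbounded cohomological amplitude of $p_*$ for stacks with large stabilizers, and is the reason the whole proposition is formulated on $\cD^{\ge 0}$.

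For (2), the first step is to show that a morphism $p$ which is surjective and proper of finite diagonal (resp.\ surjective, proper, $1$-Artin, of separated diagonal) is of \emph{universal} $\EO{\ge 0}{\Chparl}{*}{}$-descent. By (1) the base change squares attached to the \v{C}ech nerve of $p$ are right adjointable, and $p^*$ is conservative on $\cD^{\ge 0}$: a nonzero object of $\cD^{\ge 0}$ has a lowest nonvanishing cohomology sheaf, which is detected by $p^*$ since $p^*$ is $t$-exact and, being pullback along a surjection, faithfully exact on sheaves. By the descent criterion of \cite{LZ}*{\S 3.1} (a morphism whose pullback is conservative and all of whose base change squares are right adjointable is of descent) the \v{C}ech nerve of $p$ is then an augmentation of $\EO{\ge 0}{\Chparl}{*}{}$-descent, and this is stable under base change. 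Now for a proper finite-diagonal (resp.\ proper $1$-Artin separated-diagonal) hypercovering $X_\bullet^+$ in $\Chparl$ (resp.\ $\Chpars$), every matching morphism $X^+_q\to(\cosk_{q-1}(X^+_\bullet/X^+_{-1}))_q$ is surjective and of the required properness by definition, hence of universal $\EO{\ge 0}{\Chparl}{*}{}$-descent by the previous step; thus $X_\bullet^+$ is a hypercovering for universal $(G\circ F)$-descent, where, for each fixed $\lambda$, $F$ sends $X$ to $\cD(X,\lambda)$ with its usual $t$-structure — a functor into $\PSLt[,\wrc]$, since this $t$-structure is weakly right complete and the transition functors $f^*$ are $t$-exact — and $G\colon\PSLt[,\wrc]\to\Cat$ sends $\cC$ to $\cC^{\ge 0}$. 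Proposition \ref{4pr:hyperdescent2} then yields that $X_\bullet^+$ is an augmentation of $(G\circ F)$-descent, i.e.\ of $\EO{\ge 0}{\Chparl}{*}{}$-descent.

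I expect the main obstacle to be part (1): transporting proper base change for the $*$-pushforward from schemes — where it rests on the already-constructed $!$-theory — up to possibly non-representable proper morphisms of higher Artin stacks, while keeping the reduction to schemes via smooth descent compatible with $p_*$, $f^*$ and the truncation functors defining $\cD^{\ge 0}$. In part (2) the delicate point is the conservativeness of $p^*$ on $\cD^{\ge 0}$ for proper surjective $p$; this is exactly where boundedness below is used, and is what forces the restriction to $\cD^{\ge 0}$ throughout.
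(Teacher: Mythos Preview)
Your treatment of (2) is essentially the paper's: one first shows that a surjective morphism of the stated proper type is of universal $\EO{\ge 0}{\Chparl}{*}{}$-descent (the paper packages this as \cite{LZ}*{3.3.6} together with the dual of Lemma~\ref{4le:real}, which is the Barr--Beck mechanism underlying your conservativeness-plus-adjointability argument), and then invokes Proposition~\ref{4pr:hyperdescent2}.

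Part (1), however, has a gap that you correctly anticipate but do not close. Smooth base change on the target reduces $X$ and $Y$ to schemes, but it does \emph{not} reduce $Z$ or $W$ to schemes: the pullback of $p$ along an atlas of $X$ is still a proper morphism from a stack. Your fallback is to use $p_!\simeq p_*$ together with base change for $p_!$ from \cite{LZ}; but your proposed justification, ``deduced from the representable case by smooth descent when $p$ is not representable'', does not work. Smooth descent on the source replaces $p$ by the composites $p\circ v_n$ with the atlas maps $v_n$, which are no longer proper; smooth descent on the target leaves $p$ nonrepresentable. In case~(b) especially (proper, $1$-Artin, separated diagonal) there is no identification $p_!\simeq p_*$ to cite.

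The paper's substitute is to reduce $Z$ to an algebraic space by \emph{proper} (not smooth) descent. After making $X$ and $Y$ schemes, one covers the stack $Z$ by a finite surjection from a scheme via Rydh's theorem in case~(a), or by a proper surjection from a scheme via Olsson's Chow lemma \cite{OlChow}*{1.1} in case~(b). The \v{C}ech nerve of this cover consists of algebraic spaces, so the classical proper base change for algebraic spaces, fed into \cite{LZ}*{3.3.6}, shows the cover is of $\EO{\ge 0}{}{*}{}$-descent. Every $\sK\in\cD^{\ge 0}(Z,\lambda)$ is then a totalization over the nerve; one commutes $f^*$ past this totalization using (the dual of) Lemma~\ref{4le:real} and applies proper base change for algebraic spaces termwise. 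This bypasses any appeal to $p_!\simeq p_*$ for the original nonrepresentable $p$, and it is exactly here that the restriction to $\cD^{\ge 0}$ is used.
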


\begin{proof}
Let us first show that (1) implies (2). By Proposition \ref{4pr:hyperdescent2}, to show (2), it suffices to show that every surjective morphism proper of finite diagonal (resp.\ proper and $1$-Artin) is of $\EO{\geq 0}{\Chpar{}_\Box}{*}{}$-descent. For this, we apply \cite{LZ1}*{3.3.6}: condition (1) follows from the dual of Lemma \ref{4le:real}; the Beck--Chevalley condition (2) is simply part (1); condition (3) is clear.

To show (1), applying \cite{LZ1}*{4.3.6} and the smooth base change, we are reduced to the case where $X$ and $Y$ are in $\Schqcs$. In this case, there exists a finite \cite{Rydh}*{Theorem~B} (resp.\ proper \cite{OlChow}*{1.1}) surjective morphism $r_0\colon Z_0\to Z$ with $Z_0$ a scheme. Since (1) is known in the case where $p$ is proper and $0$-Artin, $r_0$ is $\EO{\geq 0}{\Chpar{}_\Box}{*}{}$-descent by the above proof of (2). Thus every object of $\cD^{\ge 0}(Z,\lambda)$ has the form $\lim_{n\in \del}r_{n*}r_n^*\sfK$, where $r_\bullet$ is a \v{C}ech nerve of $r_0$. By Lemma \ref{4le:real}, the functors $f^*$ and $g^*$ preserve limits indexed by $\del$. Thus it suffices to check that the natural transformation $f^*\circ p_*\circ r_{n*}\to q_*\circ g^*\circ r_{n*}$ is a natural equivalence. This follows from the known cases of (1) with $p$ replaced by the proper $0$-Artin morphisms $r_n$ and $p\circ r_n$.
\end{proof}

The above result can be extended to $\cD(X,\lambda)^\otimes$ under cohomological finiteness conditions. We fix an object $\lambda$ of $\Rind_{\ltor}$. The functors $\EO{}{\Chpar{}}\otimes{}$ and $\EO{\ra}{\Chpar{}}\otimes{}$ restrict to functors
\begin{align*}
\EO{}{\Chpar{}_\Box}{\otimes}{\lambda}&\colon\rN(\Chpar{}_\Box)^{op}\to\PSLM,\\
\EO{\ra}{\Chpar{}_\Box}{\otimes}{\lambda}&\colon\rN(\Chpar{}_\Box)^{op}\to\PSLM
\end{align*}
sending $X$ to $\cD(X,\lambda)^\otimes$ and $\cD(X,\lambda)_\ra^\otimes$, respectively.

\begin{proposition}\label{4pr:proper_descent2}
Let $\dS$ be a $\Box$-coprime (resp.\ $\Box$-coprime \emph{locally
Noetherian}) higher Artin stack. Let $\lambda$ be an object of
$\Rind_{\ltor}$.
\begin{enumerate}
  \item Consider a Cartesian square
      \[
      \xymatrix{
      W \ar[r]^-{g} \ar[d]_-{q} & Z \ar[d]^-{p} \\
      Y \ar[r]^-{f} & X }
      \]
      in $\Chpar{}_\Box$ (resp.\ $\Chpars$) with $p$ proper of finite
      diagonal (resp.\ proper and $1$-Artin). Assume that for every
      morphism $U\to X$ locally of finite type with $U$ an affine scheme,
      $X_0$ is $\lambda$-cohomologically finite. Then the induced square
      \[
      \xymatrix{
      \cD(Z,\lambda)  \ar[d]_-{g^*} & \cD(X,\lambda) \ar[l]_-{p^*} \ar[d]^-{f^*}\\
      \cD(W,\lambda) & \cD(Y,\lambda) \ar[l]_-{q^*} }
      \]
      is right adjointable.

  \item Let $X^+_\bullet$ be a proper finite-diagonal hypercovering in
      $\Chpar{}_\Box$ (resp.\ proper and $1$-Artin hypercovering in
      $\Chpars$). Assume that for every morphism $U\to X^+_{-1}$ locally
      of finite type with $U$ an affine scheme, $X_0$ is
      $\lambda$-cohomologically finite. Then $X^+_\bullet$ is an
      augmentation of both
      $\EO{}{\Chpar{}_\Box}{\otimes}{\lambda}$-descent and
      $\EO{\ra}{\Chpar{}_\Box}{\otimes}{\lambda}$-descent.
\end{enumerate}
\end{proposition}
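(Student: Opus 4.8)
The plan is to follow the proof of Proposition~\ref{4pr:proper_descent} almost verbatim, with the bounded-below category $\cD^{\ge 0}(-,\lambda)$ replaced throughout by the full unbounded category $\cD(-,\lambda)$, the $\lambda$-cohomological finiteness hypothesis supplying the extra structure needed to make the unbounded statements go through. The key preliminary observation is that, under this hypothesis, the usual $t$-structure on $\cD(X,\lambda)$ (and on $\cD(Z,\lambda)$, since $Z\to X$ is proper, hence locally of finite type, so the hypothesis descends to $Z$) is both right complete --- as the usual $t$-structure on the unbounded derived $\infty$-category of a Grothendieck abelian category of sheaves always is --- and left complete: it is always weakly left complete, and the uniform bound on the $\lambda(\xi)$-cohomological dimensions forces countable products of objects of $\cD^{\le 0}$ into $\cD^{\le a}$ for a fixed $a$, so Lemma~\ref{2le:wlc} applies. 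Thus the source categories of the base-change problem, and (after reducing to schemes and passing to atlases) all the categories entering the descent argument, carry left \emph{and} right complete $t$-structures, which is exactly the hypothesis of Lemma~\ref{4le:split}.

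For~(1), I would first reduce, by \cite{LZ}*{4.3.6} and smooth base change (available for unbounded coefficients in \cite{LZ}), to the case where $X$ and $Y$ are quasi-compact quasi-separated schemes. When $X$ is an algebraic space with bounded $\lambda$-cohomological dimension, right adjointability of the square on all of $\cD(X,\lambda)$ follows from the bounded-below case --- that is, from Proposition~\ref{4pr:proper_descent}(1), hence from the standard proper base change theorem --- by a limiting argument: $p_*$ has bounded cohomological amplitude because $Z$ is $\lambda$-cohomologically finite, so for $\sL\in\cD(Z,\lambda)$ the tower $\{p_*\tau^{\ge -n}\sL\}_n$ converges, left completeness gives $\sL\simeq\lim_n\tau^{\ge -n}\sL$, the $t$-exact functors $f^*,g^*$ commute with such convergent towers while the continuous functors $p_*,q_*$ commute with the limits, and base change on each $\tau^{\ge -n}\sL$ is known. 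For general quasi-compact quasi-separated $X$ I would choose a finite cover $X_0\to X$ by an algebraic space (\cite{Rydh}*{Theorem~B}), resp.\ a proper cover (\cite{OlChow}*{1.1}); by the case just treated this cover is of universal $\cD(-,\lambda)$-descent --- its \v Cech nerve is an augmentation of descent because the relevant totalization is split after pullback along $X_0$, which is where (the dual of) Lemma~\ref{4le:split} replaces the dual of Lemma~\ref{4le:real} --- so every object of $\cD(X,\lambda)$ has the form $\lim_{n\in\del}r_{n*}r_n^*\sL$, and base change over $X$ follows from base change over $X_0$ by passing to this limit.

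For~(2), the argument is that of Proposition~\ref{4pr:proper_descent}(2), run with $\cC\mapsto\cC^{\ge a}$ in place of $a=0$ for every integer $a$: using Proposition~\ref{4pr:proper_descent}(1) together with \cite{LZ}*{3.3.6} and (the dual of) Lemma~\ref{4le:split}, a surjective morphism of the relevant proper type is of universal $\EO{\ge a}{\Chparl}{*}{}$-descent, and then Proposition~\ref{4pr:hyperdescent2} (with $G$ the functor $\cC\mapsto\cC^{\ge a}$) shows that $X^+_\bullet$ is an augmentation of $\cD^{\ge a}(-,\lambda)$-descent for all $a$; the left completeness established above identifies $\cD(-,\lambda)$ with $\lim_a\cD^{\ge a}(-,\lambda)$, so these assemble to the desired augmentation of $\EO{}{\Chpar{},\lambda}{*}{\otimes}$-descent. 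The main obstacle throughout is the interplay between unboundedness and the (co)limits in the descent and base-change arguments: without the cohomological finiteness hypothesis $p_*$ would not commute with these limits, the split totalization $\lim_{n\in\del}r_{n*}r_n^*\sL$ would not recompute $\sL$, and a proper hypercovering need not be of descent. The most delicate points are verifying the split-totalization claim (which already requires proper base change at the level of algebraic spaces) and checking that left completeness propagates correctly from schemes to algebraic spaces to higher Artin stacks.
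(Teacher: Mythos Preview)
Your proposal is correct and follows the same skeleton as the paper's proof. The paper's own argument is a single sentence: repeat the proof of Proposition~\ref{4pr:proper_descent} with Proposition~\ref{4pr:hyperdescent2} replaced by Proposition~\ref{4pr:hyperdescent} and Lemma~\ref{4le:real} replaced by Lemma~\ref{4le:split}. You correctly identify that the cohomological finiteness hypothesis yields left completeness via Lemma~\ref{2le:wlc}, and that Lemma~\ref{4le:split} is what makes the Barr--Beck step \cite{LZ}*{3.3.6} go through for the full~$\cD$ rather than just~$\cD^{\ge 0}$.

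The one genuine difference is in your treatment of~(2). The paper invokes Proposition~\ref{4pr:hyperdescent} directly: once every proper surjection of the relevant type is of universal $\EO{}{\Chpar{},\lambda}{*}{\otimes}$-descent (established via \cite{LZ}*{3.3.6} and Lemma~\ref{4le:split}), Proposition~\ref{4pr:hyperdescent} upgrades this to hyperdescent in one step, because the target categories are right complete and (weakly) left complete. You instead keep Proposition~\ref{4pr:hyperdescent2}, apply it for every truncation level~$a$ to obtain $\cD^{\ge a}$-hyperdescent, and then assemble via the left-completeness identification $\cD\simeq\lim_a\cD^{\ge a}$. Both routes are valid; yours is slightly more hands-on and has the mild advantage that it only cites the already-proven Proposition~\ref{4pr:proper_descent}(2) rather than re-running the Barr--Beck argument, while the paper's route is more direct and avoids the extra limit over~$a$. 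Your explicit treatment of the algebraic-space case of~(1) by a Postnikov-tower limiting argument is a reasonable way to fill in what the paper leaves implicit.
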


\begin{proof}
We first show that (1) implies (2) for $\EO{}{\Chpar{}_\Box}{\otimes}{\lambda}$-descent. One only needs to repeat the proof of Proposition \ref{4pr:proper_descent} with Proposition \ref{4pr:hyperdescent2} replaced by Proposition \ref{4pr:hyperdescent} and Lemma \ref{4le:real} replaced by Lemma \ref{4le:split}. Note that the case for $\EO{}{\Chpar{}_\Box}{\otimes}{\lambda}$-descent implies the case for $\EO{\ra}{\Chpar{}_\Box}{\otimes}{\lambda}$-descent by \cite{LZ1}*{3.1.4}.

The proof for (1) is similar to Proposition \ref{4pr:proper_descent} since $r_0$ is of $\EO{}{\Chpar{}_\Box}{\otimes}{}$-descent as well.
\end{proof}

\subsection{Flat hyperdescent}
\label{4ss:flat}

The following proposition is an analogue of flat cohomological descent \cite{SGA4}*{Vbis Proposition 4.3.3 c)}.

\begin{proposition}\label{4pr:flat}
Every flat and locally finitely presented hypercovering of higher Artin stacks is an augmentation of $\EO{\geq 0}{\Chpar{}_\Box}{*}{}$-descent.
\end{proposition}

\begin{proof}
By Proposition \ref{4pr:hyperdescent2}, we are reduced to show that every surjective flat and locally finitely presented morphism $f\colon Y\to X$ in $\Chpar{}_\Box$ is of $\EO{\geq 0}{\Chpar{}_\Box}{*}{}$-descent. By \cite{LZ1}*{Lemma 3.1.2} and the smooth descent, we are reduced to the case of schemes. Let $X'$ be a disjoint union of strict localizations of $X$, such that the morphism  is surjective. By \cite{EGAIV}*{17.16.2, 18.5.11}, there exists a surjective \'etale morphism of schemes $g\colon X'\to X$ and a finite surjective morphism of schemes $g'\colon Z\to X'$ in $\Schqcs$ such that the composite morphism $Z\to X$ factorizes through $f$. By \cite{LZ1}*{Lemma 3.1.2} and \'etale descent, it suffices to show that $g'$ is of universal $\EO{\geq 0}{\Schqcs}{*}{}$-descent. For this, we apply \cite{LZ1}*{Lemma 3.3.6}. Condition (1) follows from the dual of Lemma \ref{4le:real}. Condition (2) follows from finite base change. Condition (3) is clear.
\end{proof}

The above proposition can be extended to $\cD(X,\lambda)^\otimes$ under cohomological finiteness conditions, similar to the case of proper hyperdescent. We leave details to the reader.

\begin{remark}\label{4re:etale}
We define the $\infty$-category of \emph{$\infty$-DM stacks} $\Chpdm{\infty}$ to be the $\infty$-category $\r{Sch}(\cG_{\et}(\dZ))$ of $\cG_{\et}(\dZ)$-schemes in the sense of \cite{LuV}*{2.3.9, 2.6.11}. Using Proposition \ref{4pr:hyperdescent}, we can adapt the DESCENT program
\cite{LZ1}*{\Sec 4} to define the first and the second enhanced operation maps for $\infty$-DM stacks, namely, a functor
\[
\EO{}{\Chpdm{\infty}}{\r{I}}{}\colon((\Chpdm{\infty})^{op}\times\rN(\Rind)^{op})^\amalg\to\Cat
\]
that is a weak Cartesian structure, and a map
\[
\EO{}{\Chpdm{\infty}}{\r{II}}{}\colon\delta^*_{2,\{2\}}(((\Chpdm{\infty})^{op}\times\rN(\Rind_\tor)^{op})^{\amalg,op})^\cart_{F,\all}\to\Cat.
\]
Applying the construction in \Sec\ref{1ss:limit}, we obtain the first and the second enhanced adic operation maps for $\infty$-DM stacks, namely, a functor
\[
\EO{\ra}{\Chpdm{\infty}}{\r{I}}{}\colon((\Chpdm{\infty})^{op}\times\rN(\Rind)^{op})^\amalg\to\Cat
\]
that is a weak Cartesian structure, and a map
\[
\EO{\ra}{\Chpdm{\infty}}{\r{II}}{}\colon\delta^*_{2,\{2\}}(((\Chpdm{\infty})^{op}\times\rN(\Rind_\tor)^{op})^{\amalg,op})^\cart_{F,\all}\to\Cat.
\]
By restriction, we have similar functors $\EO{}{\Chpdm{\infty}}{}{!}$ and $\EO{\ra}{\Chpdm{\infty}}{}{!}$. Parallel to Proposition \ref{4pr:hyperdescent_stack}, we have that every smooth hypercovering in $\Chpdm{\infty}$ is an augmentation of both $\EO{}{\Chpdm{\infty}}\otimes{}$-descent (resp.\ $\EO{}{\Chpdm{\infty}}{op}{!}$-descent) and $\EO{\ra}{\Chpdm{\infty}}\otimes{}$-descent (resp.\ $\EO{\ra}{\Chpdm{\infty}}{op}{!}$-descent). We have similar results for proper and flat hyperdescent.
\end{remark}

\begin{bibdiv}
\begin{biblist}

\bib{Lu1}{book}{
   label={HTT},
   author={Lurie, Jacob},
   title={Higher topos theory},
   series={Annals of Mathematics Studies},
   volume={170},
   publisher={Princeton University Press},
   place={Princeton, NJ},
   date={2009},
   pages={xviii+925},
   isbn={978-0-691-14049-0},
   isbn={0-691-14049-9},
   review={\MR{2522659 (2010j:18001)}},
}

\bib{Lu2}{book}{
   label={HA},
   author={Lurie, Jacob},
   title={Higher Algebra (1605)},
   note={Available at \url{http://www.math.harvard.edu/~lurie/}},
}

\bib{EGAIV}{article}{
   label={EGAIV},
   author={Grothendieck, A.},
   title={\'El\'ements de g\'eom\'etrie alg\'ebrique. IV. \'Etude locale des sch\'emas et des morphismes de sch\'emas},
   journal={Inst. Hautes \'Etudes Sci. Publ. Math.},
   number={20, 24, 28, 32},
   note={R\'edig\'es avec la collaboration de J. Dieudonn\'e, 1964--1967},
}

\bib{SGA4}{book}{
   label={SGA4},
   title={Th\'eorie des topos et cohomologie \'etale des sch\'emas},
   series={Lecture Notes in Mathematics, Vol. 269, 270, 305},
   note={S\'eminaire de G\'eom\'etrie Alg\'ebrique du Bois-Marie 1963--1964 (SGA 4); Dirig\'e par M. Artin, A. Grothendieck, et J. L. Verdier. Avec la collaboration de N. Bourbaki, P. Deligne et B. Saint-Donat},
   publisher={Springer-Verlag},
   place={Berlin, 1972--1973},
   review={\MR{0354652 (50 \#7130)}},
   review={\MR{0354653 (50 \#7131)}},
   review={\MR{0354654 (50 \#7132)}},
}

\bib{SGA5}{book}{
   label={SGA5},
   title={Cohomologie $l$-adique et fonctions $L$},
   language={French},
   series={Lecture Notes in Mathematics, Vol. 589},
   note={S\'eminaire de G\'eometrie Alg\'ebrique du Bois-Marie 1965--1966 (SGA 5); Edit\'e par Luc Illusie},
   publisher={Springer-Verlag},
   place={Berlin},
   date={1977},
   pages={xii+484},
   isbn={3-540-08248-4},
   review={\MR{0491704 (58 \#10907)}},
}

\bib{ILO}{article}{
   label={TG},
   author={Illusie, Luc},
   author={Laszlo, Yves},
   author={Orgogozo, Fabrice},
   title={Travaux de Gabber sur l'uniformisation locale et la cohomologie \'etale des sch\'emas quasi-excellents. S\'eminaire \`a l'\'Ecole polytechnique 2006--2008},
   language={French},
   note={Avec la collaboration de F. D\'eglise, A. Moreau, V. Pilloni, M. Raynaud, J. Riou, B. Stroh, M. Temkin et W. Zheng},
   journal={Ast\'erisque},
   number={363--364},
   date={2014},
   issn={0303-1179},
   isbn={978-2-85629-790-2},
}

\bib{Org}{article}{
   label={TGxiia},
   author={Orgogozo, Fabrice},
   title={Descente cohomologique orient\'ee},
   part={Expos\'e XII$_\textrm{A}$},
   note={In \cite{ILO}},
}

\bib{Zhhyper}{article}{
   label={TGxiib},
   author={Zheng, Weizhe},
   title={On hyper base change},
   part={Expos\'e XII$_\textrm{B}$},
   note={In \cite{ILO}},
}

\bib{PS}{article}{
   label={TGxiv},
   author={Pilloni, Vincent},
   author={Stroh, Beno\^it},
   title={Fonctions de dimension},
   part={Expos\'e XIV},
   note={In \cite{ILO}},
}

\bib{Ill}{article}{
   label={TGxviiia},
   author={Illusie, Luc},
   title={Cohomological dimension: First results},
   part={Expos\'e XVIII$_\text{A}$},
   note={In \cite{ILO}},
}

\bib{Behrend}{article}{
   author={Behrend, Kai A.},
   title={Derived $\ell$-adic categories for algebraic stacks},
   journal={Mem. Amer. Math. Soc.},
   volume={163},
   date={2003},
   number={774},
   pages={viii+93},
   issn={0065-9266},
   review={\MR{1963494 (2004e:14006)}},
}

\bib{BBD}{article}{
   author={Be{\u\i}linson, A. A.},
   author={Bernstein, J.},
   author={Deligne, P.},
   title={Faisceaux pervers},
   language={French},
   conference={
      title={Analysis and topology on singular spaces, I},
      address={Luminy},
      date={1981},
   },
   book={
      series={Ast\'erisque},
      volume={100},
      publisher={Soc. Math. France},
      place={Paris},
   },
   date={1982},
   pages={5--171},
   review={\MR{751966 (86g:32015)}},
}

\bib{WeilII}{article}{
   author={Deligne, Pierre},
   title={La conjecture de Weil. II},
   journal={Inst. Hautes \'Etudes Sci. Publ. Math.},
   volume={52},
   date={1980},
   pages={137--252},
   review={\MR{601520 (83c:14017)}},
}

\bib{Ekedahl}{article}{
   author={Ekedahl, Torsten},
   title={On the adic formalism},
   conference={
      title={The Grothendieck Festschrift, Vol.\ II},
   },
   book={
      series={Progr. Math.},
      volume={87},
      publisher={Birkh\"auser Boston},
      place={Boston, MA},
   },
   date={1990},
   pages={197--218},
   review={\MR{1106899 (92b:14010)}},
}

\bib{FN}{article}{
   author={Frenkel, Edward},
   author={Ng{\^o}, Bao Ch{\^a}u},
   title={Geometrization of trace formulas},
   journal={Bull. Math. Sci.},
   volume={1},
   date={2011},
   number={1},
   pages={129--199},
   issn={1664-3607},
   review={\MR{2823791}},
   doi={10.1007/s13373-011-0009-0},
}

\bib{Gabber}{article}{
   author={Gabber, Ofer},
   title={Notes on some t-structures},
   conference={
      title={Geometric aspects of Dwork theory. Vol. I, II},
   },
   book={
      publisher={Walter de Gruyter GmbH \& Co. KG, Berlin},
   },
   date={2004},
   pages={711--734},
   review={\MR{2099084 (2005m:14025)}},
}

\bib{Gai2}{article}{
   author={Gaitsgory, Dennis},
   title={Generalities on DG categories},
   note={Available at \url{http://www.math.harvard.edu/~gaitsgde/GL/}},
}

\bib{KS}{book}{
   author={Kashiwara, Masaki},
   author={Schapira, Pierre},
   title={Categories and sheaves},
   series={Grundlehren der Mathematischen Wissenschaften},
   volume={332},
   publisher={Springer-Verlag},
   place={Berlin},
   date={2006},
   pages={x+497},
   isbn={978-3-540-27949-5},
   isbn={3-540-27949-0},
   review={\MR{2182076 (2006k:18001)}},
}

\bib{LO}{article}{
   author={Laszlo, Yves},
   author={Olsson, Martin},
   title={The six operations for sheaves on Artin stacks. I. Finite
   coefficients},
   journal={Publ. Math. Inst. Hautes \'Etudes Sci.},
   number={107},
   date={2008},
   pages={109--168},
   issn={0073-8301},
   review={\MR{2434692 (2009f:14003a)}},
   doi={10.1007/s10240-008-0011-6},
}

\bib{LO2}{article}{
   author={Laszlo, Yves},
   author={Olsson, Martin},
   title={The six operations for sheaves on Artin stacks. II. Adic coefficients},
   journal={Publ. Math. Inst. Hautes \'Etudes Sci.},
   number={107},
   date={2008},
   pages={169--210},
   issn={0073-8301},
   review={\MR{2434693 (2009f:14003b)}},
   doi={10.1007/s10240-008-0012-5},
}

\bib{LO3}{article}{
   author={Laszlo, Yves},
   author={Olsson, Martin},
   title={Perverse $t$-structure on Artin stacks},
   journal={Math. Z.},
   volume={261},
   date={2009},
   number={4},
   pages={737--748},
   issn={0025-5874},
   review={\MR{2480756 (2009j:14004)}},
   doi={10.1007/s00209-008-0348-z},
}

\bib{LMB}{book}{
   author={Laumon, G{\'e}rard},
   author={Moret-Bailly, Laurent},
   title={Champs alg\'ebriques},
   series={Ergebnisse der Mathematik und ihrer Grenzgebiete. 3. Folge. A Series of Modern Surveys in Mathematics [Results in Mathematics and Related Areas. 3rd Series. A Series of Modern Surveys in Mathematics]},
   volume={39},
   publisher={Springer-Verlag},
   place={Berlin},
   date={2000},
   pages={xii+208},
   isbn={3-540-65761-4},
   review={\MR{1771927 (2001f:14006)}},
}

\bib{LZ0}{article}{
   author={Liu, Yifeng},
   author={Zheng, Weizhe},
   title={Gluing restricted nerves of $\infty$-categories},
   note={Available at \url{http://www.math.northwestern.edu/~liuyf/sixo.pdf}},
}

\bib{LZ1}{article}{
   author={Liu, Yifeng},
   author={Zheng, Weizhe},
   title={Enhanced six operations and base change theorem for higher Artin stacks},
   note={Available at \url{http://www.math.northwestern.edu/~liuyf/sixi.pdf}},
}

\bib{LuV}{article}{
   author={Lurie, Jacob},
   title={Derived Algebraic Geometry V: Structured Spaces},
   note={Available at \url{http://www.math.harvard.edu/~lurie/papers/DAG-V.pdf}},
}

\bib{OlChow}{article}{
   author={Olsson, Martin C.},
   title={On proper coverings of Artin stacks},
   journal={Adv. Math.},
   volume={198},
   date={2005},
   number={1},
   pages={93--106},
   issn={0001-8708},
   review={\MR{2183251 (2006h:14003)}},
   doi={10.1016/j.aim.2004.08.017},
}

\bib{Rydh}{article}{
   author={Rydh, David},
   title={Noetherian approximation of algebraic spaces and stacks},
   journal={J. Algebra},
   volume={422},
   date={2015},
   pages={105--147},
   issn={0021-8693},
   review={\MR{3272071}},
   doi={10.1016/j.jalgebra.2014.09.012},
}

\bib{Zh}{article}{
   author={Zheng, Weizhe},
   title={Six operations and Lefschetz-Verdier formula for Deligne-Mumford stacks},
   journal={Sci. China Math.},
   volume={58},
   date={2015},
   number={3},
   pages={565--632},
   issn={1674-7283},
   review={\MR{3319927}},
   doi={10.1007/s11425-015-4970-z},
}

\end{biblist}
\end{bibdiv}

\end{document}